\newtheorem{theorem}{Theorem}[section]
\newtheorem{prop}[theorem]{Proposition}
\newtheorem{lemma}[theorem]{Lemma}
\newtheorem{cor}[theorem]{Corollary}
\newtheorem{expectation}[theorem]{Expectation}
\newtheorem{definition}[theorem]{Definition}
\theoremstyle{remark}
\newtheorem{remark}[theorem]{Remark}
\newtheorem{example}[theorem]{Example}
\DeclareMathOperator{\Aut}{Aut}
\DeclareMathOperator{\colim}{colim}
\DeclareMathOperator{\Hom}{Hom}
\DeclareMathOperator{\Fun}{Fun}
\DeclareMathOperator{\Tot}{Tot}
\DeclareMathOperator{\Comod}{Comod}
\DeclareMathOperator{\Mod}{Mod}
\DeclareMathOperator{\qc}{\mathcal Q}
\def\1bord{1\mathrm{Bord}}
\def\2bord{2\mathrm{Bord}}
\def\3bord{3\mathrm{Bord}}
\DeclareMathOperator{\hh}{HH}
\newcommand{\ra}{\rightarrow}
\newcommand{\inv}{{}^{-1}}
\def\cA{\mathcal A}\def\cB{\mathcal B}\def\cC{\mathcal C}\def\cD{\mathcal D}
\def\cE{\mathcal E}\def\cF{\mathcal F}\def\cH{\mathcal H}
\def\cI{\mathcal I}\def\cK{\mathcal K}\def\cL{\mathcal L}
\def\cM{\mathcal M}\def\cO{\mathcal O}\def\cP{\mathcal P}
\def\cS{\mathcal S}
\def\cZ{\mathcal Z}
\def\CC{\mathbb C}\def\DD{\mathbb D}
\def\SS{\mathbb S}
\def\ZZ{\mathbb Z}
\def\ot{\otimes}
\def\ot{\otimes}
\def\XYX{X \times_Y X}
\def\fM{\mathfrak M}
\def\fz{\mathfrak z}
\def\ftr{\mathfrak {tr}}
\def\wt{\widetilde}
\def\D{{\mathcal D}}
\newcommand{\on}{\operatorname}
\newcommand{\Vect}{\on{Vect}}
\newcommand{\Rep}{\on{Rep}}
\newcommand{\Ind}{\on{Ind}}
\newcommand{\Perf}{\on{Perf}}
\newcommand{\ti}{\times}
\newcommand{\Id}{{\rm id}}
\newcommand{\intHom}{{\mathcal Hom}}
\newcommand{\intEnd}{{\mathcal End}}
\newcommand{\mc}{\mathcal}
\def\oo{\infty}
\def\Tr{\mathcal Tr}
\newcommand{\bs}{\backslash}
\newcommand{\Idem}{{st}}
\newcommand\Cat{\on{Cat}}
\newcommand\Alg{\mathcal Alg}
\def\sn{\section}
\def\ssn{\subsection}
\def\sssn{\subsubsection}
\def\End{\on{End}}
\def\intEnd{{\mathcal End}}
\def\fg{\mathfrak g}
\def\fL{\mathfrak L}
\def\fM{\mathfrak M}
\def\fN{\mathfrak N}
\def\Tr{\on{Tr}}
\def\Z{\on{Z}}
\def\chsh{{Ch}}
\def\St{St}
\def\Dcoh{\mathcal D_{coh}}
\def\cP{\mathcal P}
\def\Ind{\on{Ind}}
\def\catop{\diamondsuit}
\def\fh{\mathfrak h}
\def\adjquot{/_{\hspace{-0.2em}ad}\hspace{0.1em}}
\newcommand{\Xtil}{\tilde X}
\newcommand{\cHtil}{\widetilde \cH}
\begin{document}

\title{The character theory of a complex group}
\author{David Ben-Zvi and David Nadler}

\address{Department of Mathematics\\University of Texas\\Austin, TX 78712-0257}
\email{benzvi@math.utexas.edu}
 \address{Department of Mathematics\\University
  of California\\Berkeley, CA 94720-3840}
\email{nadler@math.berkeley.edu}

\maketitle

\newcommand{\GoG}{\displaystyle{\frac{G}{G}}}
\newcommand{\BoB}{\displaystyle{\frac{B}{B}}}
\newcommand{\BGB}{B\bs G/B}
\newcommand{\Gv}{G^{\vee}}

\newcommand{\findim}{\mathit{fd}}





\begin{abstract}
We apply the ideas of derived algebraic geometry and topological field
theory to the representation theory of reductive groups.  Our focus is the Hecke category
of Borel-equivariant $\D$-modules on the flag variety of a complex reductive group~$G$
(equivalently, the category of Harish Chandra bimodules of trivial central character)
and its monodromic variant. The Hecke category
is a categorified analogue of the finite Hecke algebra, which is a finite-dimensional
semi-simple symmetric Frobenius algebra. We establish 
parallel properties of the Hecke category, showing it is a two-dualizable Calabi-Yau monoidal
category, so that
in particular, its monoidal (Drinfeld) center and
trace 
coincide. We calculate that they are identified through the Springer
correspondence with Lusztig's unipotent character
sheaves. 
It follows that Hecke module categories, such as categories
of Lie algebra representations and Harish Chandra modules for $G$ and
its real forms, have characters which are themselves character sheaves.
Furthermore, the Koszul duality for Hecke categories 
provides 
a Langlands duality for unipotent character sheaves.  This can be viewed as part of a
dimensionally reduced version of the geometric Langlands
correspondence, or as $S$-duality for a maximally supersymmetric gauge
theory in three dimensions.



\end{abstract}

\tableofcontents



\section{Introduction}


Geometric approaches to the representation theory of Lie groups $G$ are
intimately linked with the study of $\D$-modules.  For example, Harish
Chandra established his celebrated regularity results for
distributional characters of admissible representations of real reductive
groups by showing that they satisfy an adjoint-equivariant regular
holonomic system of differential equations.  The resulting
$\D$-module is the fundamental example of a {character sheaf in the
  sense of Lusztig} and also is the basic object in Springer theory.

In another direction, Harish Chandra introduced $(\fg, K)$-modules to
capture the underlying algebraic structure of representations of real
reductive groups while avoiding the analytic intricacies involving 
function spaces. In turn, Beilinson-Bernstein opened the study of $(\fg, K)$-modules to the powerful sheaf-theoretic techniques
of algebraic geometry by establishing a  localization theorem 
  identifying them with
$K$-equivariant $\D$-modules on the flag variety $G/B$. (More precisely, representations with strictly trivial infinitesimal character
are identified with $\D$-modules on $G/B$, while representations with  generalized trivial infinitesimal character correspond to unipotent monodromic $\D$-modules on $G/N$.) 
Other fundamental categories in  representation
theory of reductive groups, such as the highest weight representations
of Category~$\mathcal O$, are similarly equivalent to categories of (possibly monodromic)
$\D$-modules on flag varieties and related spaces.

Going one step further, the natural symmetries of representations,
such as the intertwining operators for principal series
representations, correspond to integral transforms acting on 
$\D$-modules.  In the most basic instance of this, the collection of
all integral transforms acting on 
$\D$-modules on the flag variety $G/B$ forms the 
Hecke category
of Borel biequivariant 
$\D$-modules on $G$. 
Composition of integral transforms, geometrically described by
convolution of integral kernels, equips the Hecke category 
with a monoidal structure which categorifies the classical Hecke
algebra associated to the Weyl group of $G$. Moreover, other fundamental categories of representations, such as categories of $(\fg,K)$-modules and Category
$\mathcal O$, are naturally
module categories for the Hecke category. 

Our main results relate these two classes of
categories: on the one hand, the Harish Chandra system and other
character sheaves, and on the other hand, the Hecke category and its module categories. We first establish 
that the Hecke category, along with its monodromic variant, 
satisfies categorified analogues of the fundamental properties
of the finite Hecke algebra.
Namely, the Hecke category is a 
two-dualizable Calabi-Yau monoidal category
as the
finite Hecke calgebra is a finite-dimensional semi-simple symmetric Frobenius algebra. 
We then
calculate that the monoidal (Drinfeld) center and trace of the Hecke category, as well as those of its monodromic variant, 
are identified with 
Lusztig's unipotent character sheaves. 
It follows that dualizable Hecke module categories have characters which are character sheaves, 
a categorification of Harish Chandra's
theory of characters. (It is possible to go further and apply this perspective to give a geometric construction of the Harish Chandra characters themselves.)

The technical setting for our work is that of homotopical algebra and derived algebraic geometry.
We apply the powerful tools of this rapidly developing subject to show that 
operations on the ``function spaces" of algebraic analysis, namely categories
of $\D$-modules, are representable by integral transforms and to analyze the algebraic
structure of these operations.

Our main results are perhaps best understood in
the framework of extended topological field theory, using Lurie's
proof of the Cobordism Hypothesis\;\cite{jacob TFT}.  We explain that
there is a topological field theory (TFT), which we call the 
{\em  character theory}, 
that organizes much of
the geometric representation theory associated to  $G$.  (In  work in progress with Sam Gunningham, 
we plan to construct a family of TFTs parametrized by central character -- the ``moduli of vacua" of the theory -- of which the unipotent
 character theory discussed here is the fiber
over the trivial parameter.) The character theory can be viewed as an extended
3-dimensional TFT, defined on $0,1$ and
$2$-dimensional manifolds, or alternatively, as a categorified
2-dimensional TFT.  It assigns to a point the
Hecke category itself (viewed as an object in a higher Morita category of monoidal categories), or equivalently, its 2-category of modules.
It assigns to a circle the category of unipotent character sheaves, which thus carries the rich operadic structure of TFT (such as a
ribbon tensor structure). Moreover, the Koszul duality of Hecke categories, exchanging the equivariant and monodromic variants, implies that the  character theory
satisfies a Langlands duality.
This can
be viewed as a dimensionally reduced form of the geometric Langlands
conjecture, or as $S$-duality for a maximally supersymmetric gauge
theory in three dimensions.

\medskip

The remainder of the Introduction is organized as follows. After a brief summary of notation and conventions, in Section~\ref{rep
  intro}, we state our main results, summarized in Theorem~\ref{intro Hecke thm},  from the perspective of
representation theory; in Section~\ref{tools}, we discuss the
underlying techniques of homotopical algebra for $\D$-modules; and in
Section~\ref{TFT}, we explain how our results fit into the framework of
topological field theory. Finally, in Section~\ref{families}, we propose a natural extension of the results of this paper
to the context of twisted $\D$-modules.
%
%
%
%
%
%
%
%

\medskip
{\em  Notation and conventions.}
Throughout the paper, $G$ will denote a  complex reductive group
with Lie algebra $\fg$. We choose a 
 Borel subgroup $B\subset G$ with unipotent radical $N\subset B$. We denote by  $H=B/N$
 the universal 
 Cartan torus with Lie algebra $\fh$.

  We also adopt the following idiosyncratic notation: we
 write $G\adjquot G$ to denote the adjoint quotient, or
 in other words, the quotient of $G$ by itself acting by
 conjugation. More generally, for a subgroup $H\subset G$, we will
 write $G\adjquot H$ to denote the quotient of $G$ by the subgroup $H$
 acting by conjugation. In this way, we can write $G\adjquot B$ and not
 confuse the quotient with the flag variety $G/B$.

Since our methods and results will involve homotopical algebra
and topological field theory, we find it most natural to work in the
context of differential graded (dg) and $\infty$-categories. By a dg category, we
will  always mean a pre-triangulated $\CC$-linear dg category. Since the homotopy theory
of such categories is identified with that of stable $\CC$-linear $\infty$-categories, we will use the
terms dg category and stable $\infty$-category interchangeably. Most of our constructions will take place within
the symmetric monoidal $\infty$-category $\St_\CC$ of stable presentable $\CC$-linear $\oo$-categories
with morphisms continuous (colimit-preserving)
functors. We will usually abuse terminology and refer to its objects as categories.
For foundations on $\oo$-categories (in the sense of
Joyal~\cite{Joyal}), we refer to the comprehensive work of Lurie
\cite{topos,HA}.  See Sections~\ref{infinity},
 \ref{sect ind-categories}, \ref{monoidal infinity} for a summary of
what we will need of this theory.

By a scheme �$X$, we will always mean a quasicompact, separated derived
scheme of finite type over $\CC$. 
By a $\D$-module on a smooth scheme $X$, we will always mean a complex of $\D$-modules, and all functors
will be derived. Thus the category of $\D$-modules $\D(X)$ means the dg category of complexes of $\D$-modules
which we can regard as an object of $\St_\CC$.

%
%


\ssn{Homotopical algebra of Hecke categories}\label{rep
  intro} Here we state our main results, 
  summarized in Theorem~\ref{intro Hecke thm},
   from the perspective of
representation theory.
  We first recall 
  two categories of $\D$-modules of fundamental
interest in  representation theory and of primary focus in this paper.
%
%


\sssn{Hecke categories}
%

\begin{definition}
1) The Hecke category  is the dg category $$\cH_G = \D_G(G/B\times G/B)\simeq \D(B\bs G/B)$$
of all $B$-biequivariant $\D$-modules on $G$.

2) The monodromic Hecke category  is the full dg subcategory
 $$\cHtil_G \subset \D_G(G/N\times G/N)\simeq \D(N\bs G/N)$$
generated (under colimits) by pullbacks from $\cH_G$.
\end{definition}
%

Both Hecke categories carry natural monoidal structures given by convolution. 
Thus they form algebra objects in the symmetric monoidal $\infty$-category $\St_\CC$.

\sssn{Character sheaves} 

The traditional definition \cite{character 1} of character sheaf (see also~\cite{laumon} for a review and \cite{ginzburg,MV,grojnowski} for geometric
approaches to the theory)
is based on the horocycle
correspondence
$$
\xymatrix{
{G}\adjquot{G} & \ar[l]_-{p} {G}\adjquot{B} \ar[r]^-{\delta} & B\bs G/B.
}
$$ The map $p$ is the natural projection with fibers isomorphic to the flag
variety $G/B$, and the map $\delta$ is the natural projection with fibers
isomorphic to $B$.  Pulling back and pushing forward $\D$-modules
gives a functor called the Harish Chandra transform (in the
terminology of \cite{ginzburg})
$$
\xymatrix{
F=p_*\delta^!:\cH_G=\D(\BGB)\ar[r] &  \D(G\adjquot G)
}$$

A traditional unipotent character sheaf is a (shift of a) $G$-equivariant $\D$-module 
on $G$ that is a simple constituent of 
a $\D$-module
obtained by applying the transform $F$
to a simple $\D$-module (in the heart of the standard $t$-structure on $\cH_G$).
All unipotent character sheaves can be characterized geometrically as the simple 
adjoint-equivariant $\D$-modules  
on $G$   with 
singular support in the
nilpotent cone and unipotent central character. 
Lusztig showed that the collection of all character sheaves provides 
a construction of the characters of finite groups of Lie type, and
gave a detailed classification in relation with the structure of the finite Hecke algebra.

With the above in mind, we make the following definition in the homotopical setting.

\begin{definition}
The dg category $\chsh_G$ of unipotent character sheaves is
defined to be the full subcategory of $\D(G\adjquot G)$ generated
(under colimits) by the image of the Harish
Chandra transform~$F$.
%
%
\end{definition}

Note that traditional unipotent character sheaves are simple objects of the
heart of  $\chsh_G$ with respect to the standard
$t$-structure.

\begin{example}\label{springer}
The above correspondence 
can be viewed as a collection of Weyl group twisted versions of the Grothendieck-Springer simultaneous resolution.
Namely, if we restrict to the support $$pt/B=B\bs B/B \subset B\bs G/B$$ of the 
monoidal unit of $\cH_{G}$,
then we recover (an equivariant global version of) the Grothendieck-Springer correspondence
$$
\xymatrix{
{G}\adjquot{G} & \ar[l]_-{p} {\wt G}/{G} \ar[r]^-{\delta} & B\bs B/B \simeq pt/B.
}
$$
Here ${\wt G}/G$ denotes the simultaneous resolution
$$
{\wt G}/{G} =\{(g,B')\, |\, g\in B'\}/G \simeq B\adjquot B,
$$
and the map $p$ forgets the flag $B'$, while the map $\delta$ forgets the group element $g$.

The global version of the Springer sheaf 
$$\cS_G = F(\cO_{pt/B}) = p_*\delta^!\cO_{pt/B}$$
is the fundamental example of a character sheaf. (Some might prefer to shift $\cS_G$ so that it lies in the heart of the standard
$t$-structure.)
According to~\cite{HK}, it coincides with Harish Chandra's adjoint-equivariant 
holonomic system of differential equations obtained by setting all of the Casimir operators to zero. 
\end{example}



\subsubsection{Prelude to results: analogy with finite groups}

Our results are perhaps most easily understood in analogy with the
following well-known properties of a {finite} group $\Gamma$,
and specifically its complex group algebra
$\CC[\Gamma]$ and category of finite-dimensional complex modules $\Rep^{\findim}_\CC(\Gamma)$.


\medskip
1) The group algebra $\CC[\Gamma]$ is a finite-dimensional
  semi-simple symmetric Frobenius algebra with nondegenerate functional
$$
\xymatrix{
\tau:\CC[\Gamma]\ar[r] & \CC &
\tau(f)=f(e)/|\Gamma|
}
$$ where $e\in \Gamma$ is the unit.

\medskip
2)
The class functions $\CC [\Gamma]^\Gamma$ are the
  center of the group algebra
$$\xymatrix{
\fz:\CC [\Gamma]^\Gamma  \ar@{^(->}[r] & \CC [\Gamma]
}$$
and in particular form a commutative algebra.
Equivalently, they are the
  endomorphisms of the identity functor of $\Rep^{\findim}_\CC(\Gamma)$, and so act
  universally on any  $M\in \Rep^{\findim}_\CC(\Gamma)$.

\medskip
3)
The class functions $\CC [\Gamma]^\Gamma$ are the target of a universal trace map
$$\xymatrix{
\ftr: \CC [\Gamma]\ar@{->>}[r] & \CC [\Gamma]^\Gamma
}$$
initial among all maps that are equal on $ab, ba\in \CC[\Gamma]$, for
   $a, b\in \CC[\Gamma]$. Equivalently, they are the universal
  recipient of a functorial trace map from endomorphisms of any
   $M\in \Rep^{\findim}_\CC(\Gamma)$ which assigns to the identity  the
  character $\chi_M\in \CC [\Gamma]^\Gamma$. 
  
  The symmetric property of the Frobenius algebra is equivalent
  to the fact that the functional $\tau$ is a trace so admits a  factorization
  $$\xymatrix{
  \tau:\CC[\Gamma] \ar[r]^-{\ftr} & \CC[\Gamma]^\Gamma \ar[r] & \CC}$$


It follows that class functions themselves form a commutative Frobenius algebra,
with multiplication and functional induced by those of $\CC[\Gamma]$ via the
center and trace maps.
%

\subsubsection{Results: Hecke categories and character sheaves}
%
To begin, we work with monoidal categories, by which we mean algebra objects $\cC\in Alg(\St_\CC)$ in the symmetric monoidal $\infty$-category of dg (or stable presentable) categories $\St_\CC$, as categorified analogues of $\CC$-algebras. 
There are natural categorical analogues of center and trace 
$$
\xymatrix{
 \Z(\cC) = \End_{\cC\ot  \cC^{op}}(\cC)  & \Tr(\cC) = \cC\ot_{\cC\ot  \cC^{op}} \cC 
}
$$
The first is naturally braided, i.e., an $\cE_2$-algebra in $\St_\CC$, by the general form of the Deligne conjecture (see~\cite{HA}) and comes with a universal central map
$$
\xymatrix{
\fz: \Z(\cC)\ar[r] &  \cC
}
$$
The second has a natural $S^1$-action, a generalization of the Connes cyclic structure on Hochschild chains, and comes with a universal trace map
$$
\xymatrix{
\ftr: \cC\ar[r] &  \Tr(\cC)
}
$$

We propose the following notion of a semi-rigid monoidal category as a categorified analogue of 
finite-dimensional semi-simple algebra (see Section~\ref{TFT} for further justification of this analogy).

\begin{definition}\label{perfect intro def}
A monoidal category $\cC$ is called a {\em
  semi-rigid category} if $\cC$ is compactly generated, and
all of its compact objects (or equivalently, a collection of compact
  generators) are both left and right dualizable.
\end{definition}

\begin{remark}If in addition the unit of $\cC$ is compact (or equivalently,
its compact and dualizable objects coincide), then $\cC$ is also {\em rigid} in the sense of \cite{DGcat} (note however that~\cite{DGcat} does not require $\cC$ to be compactly generated).
For example, a quasi-compact derived stack $X$
with affine diagonal is perfect in the sense of \cite{BFN} when its category $\qc(X)$
of quasicoherent sheaves is rigid. We have introduced the notion of semi-rigid
category to accommodate our main example, the Hecke category, whose unit is not compact.
\end{remark}

There are natural weak and strong analogues of the notion of symmetric Frobenius algebra for semi-rigid monoidal categories, namely pivotal and Calabi-Yau structures. 

\begin{definition}  
 A {\em pivotal structure} on a semi-rigid monoidal category $\cC$ is a monoidal identification of the 
operations of taking left and right duals.
\end{definition}

A pivotal structure  equips $\cC$ with a trace 
$$
\xymatrix{
\tau:\Tr(\cC)\ar[r] & \Vect
}
$$
factoring the functional given on compact objects 
(and all objects if $1_\cC$ is compact) by the pairing
$$
\xymatrix{
\Hom_{\cC}(1_\cC, -): \cC\ar[r] & \Vect
}
$$
We show in Section~\ref{rigid categories} that a pivotal structure on a semi-rigid category induces a canonical identification of
its monoidal center and trace. Pivotal structures however have the disadvantage that they are not manifestly Morita invariant notions.


\begin{definition}\label{CY def}
A {\em Calabi-Yau structure} on a monoidal category $\cC$ is a trace map
$$
\xymatrix{
\Tr(\cC) \ar[r] &  \Vect
  }
  $$
 that is $S^1$-invariant and so that the natural composition
 $$
\xymatrix{
\cC\otimes \cC\ar[r] & \Tr(\cC) \ar[r] &  \Vect
  }
  $$
   is the evaluation of a self-duality of $\cC$.
  \end{definition} 

The notion of Calabi-Yau category is a Morita invariant notion, however we are not aware of a ``classical"  monoidal category 
interpretation of this strong notion of cyclic trace. 

With these definitions in mind, here is an abstract statement of our main results, in parallel to the
above highlighted statements for finite groups.

\begin{theorem}\label{intro Hecke thm}

1) (Theorems \ref{perfect dualizable},
    \ref{perfect Hecke}, \ref{CY Hecke}) The Hecke categories $\cH_G$ and $\cHtil_G$ are semi-rigid, and carry canonical pivotal and Calabi-Yau structures.


2) (Theorem \ref{thm char shvs}) The dg category
  $\chsh_G$ of unipotent character sheaves is canonically equivalent to the monoidal center and trace of both Hecke categories $\cH_G$ and $\cHtil_G$. 
 \end{theorem}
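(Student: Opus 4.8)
The plan is to reduce part~(2) to a computation of the trace of $\cH_G$ and then to identify that trace with $\chsh_G$ through the geometry of the horocycle correspondence.

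By part~(1) the Hecke categories are semi-rigid with canonical pivotal structures, so the general statement of Section~\ref{rigid categories} provides canonical equivalences $\Z(\cH_G)\simeq\Tr(\cH_G)$ and $\Z(\cHtil_G)\simeq\Tr(\cHtil_G)$ intertwining the universal central map $\fz$ with the universal trace map $\ftr$; it therefore suffices to identify $\Tr(\cH_G)$ with $\chsh_G$, compatibly with $\ftr$ and the Harish--Chandra transform $F=p_*\delta^!$. The key structural point is that $F$ is \emph{cyclically symmetric}: since $p$ and $\delta$ are $G$-equivariant and the target $G\adjquot G$ is the adjoint quotient, applying $F$ to a convolution $A\star B$ closes the composition of the two kernels into a conjugation-invariant loop, so that $F(A\star B)\simeq F(B\star A)$ naturally and $S^1$-equivariantly. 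Hence $F$ factors as $F=\overline F\circ\ftr$ for a colimit-preserving functor $\overline F\colon\Tr(\cH_G)\to\D(G\adjquot G)$, which lands in $\chsh_G$ by the definition of $\chsh_G$. Dually, the inverse horocycle transform $\delta_*p^!\colon\D(G\adjquot G)\to\cH_G$ carries a canonical half-braiding, again from $G$-equivariance, hence promotes to a functor $\chsh_G\to\Z(\cH_G)$; under $\Z\simeq\Tr$ this is an inverse candidate to $\overline F$. What remains is to show $\overline F$ is an equivalence onto $\chsh_G$.

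The geometric engine is the Morita/loop-space presentation of $\cH_G$. Working $G$-equivariantly one has $\cH_G\simeq\D(G/B)\otimes_{\D(G)}\D(G/B)$, the monoidal category of endomorphisms of $\D(G/B)$ in the category of categories with strong $G$-action; properness of $G/B$ makes $\D(G/B)$ self-dual as a $\D(G)$-module, which is the source of the pivotal and Calabi--Yau structures and is what makes the trace computable. Applying the $\D$-module integral-transform formalism together with the loop-space description of Hochschild homology of such endomorphism categories identifies $\Tr(\cH_G)$ with $\D$-modules on the ``$\D(G/B)$-twisted loop space of $pt/G$'', and the horocycle correspondence is precisely the geometric shadow of this presentation: the projection to $\cL(pt/G)\simeq G\adjquot G$, whose $\D$-modules form $\Tr(\D(pt/G))$ --- the categorified class functions on $G$ --- is realized by $p$, and the twist by $\delta$. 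In particular $\overline F$ is the functor induced on traces by the (non-full) passage from $\cH_G$ to $\D(pt/G)$ along this module, so its essential image is the subcategory of $\D(G\adjquot G)$ cut out by the singular-support and unipotent-central-character conditions that characterize character sheaves; matching this up with the standard classification of Lusztig's unipotent character sheaves --- via the global Grothendieck--Springer resolution $\wt G/G\to G\adjquot G$ and the Springer sheaf $\cS_G=F(1_{\cH_G})$ of Example~\ref{springer} --- is the content of the Springer correspondence.

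For full faithfulness of $\overline F$ I would reduce to the assertion that the horocycle monad $M=\delta_*p^!p_*\delta^!$ on $\cH_G$ computes the cyclic trace: for compact $A,B$ the natural map $\Hom_{\Tr(\cH_G)}(\ftr A,\ftr B)\to\Hom_{\D(G\adjquot G)}(FA,FB)=\Hom_{\cH_G}(A,MB)$ must be an equivalence. Since $p$ is proper (so $p_*=p_!$, with fibers $G/B$) and $\delta$ has fibers $B$, base change rewrites $M$ as an explicit convolution kernel on $\BGB\times\BGB$; a d\'evissage along the $\cH_G$-bimodule filtration generated by the unit then reduces the claim to the diagonal case $A=B=1_{\cH_G}$, namely the identification of the endomorphisms of the Springer sheaf (equivalently of Harish--Chandra's system $\cS_G$) with the Hochschild homology $\hh_\bullet(\cH_G)$ of the Hecke category. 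This computation --- together with the singular-support bookkeeping showing $\overline F$ surjects onto \emph{exactly} $\chsh_G$ --- is where the real content lies and is the step I expect to be the main obstacle; the cyclic symmetry, self-duality and formal loop-space manipulations are structural. Essential surjectivity is then automatic: $\chsh_G$ is generated under colimits by $\mathrm{im}(F)\subseteq\mathrm{im}(\overline F)$ and $\mathrm{im}(\ftr)$ generates $\Tr(\cH_G)$, so the colimit-preserving functor $\overline F$ has generating image and is a localization, hence --- being fully faithful --- an equivalence onto $\chsh_G$. The monodromic case runs along the same lines with $G/B$ and $\BGB$ replaced by $G/N$ and $N\bs G/N$: the ``generated by pullbacks from $\cH_G$'' condition defining $\cHtil_G$ is exactly what preserves self-duality of the relevant module over $\D(G)$, while the extra torus-monodromy directions contribute a factor supported on a formal unipotent neighborhood of the identity in $H\adjquot H$ whose own trace is trivial, so one recovers the \emph{same} $\chsh_G$ --- consistent with $\Z$ and $\Tr$ being Morita invariant whereas the Koszul duality $\cH_G\leftrightarrow\cHtil_{\Gv}$ is not.
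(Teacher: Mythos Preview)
Your strategy diverges substantially from the paper's. The paper computes the \emph{center} first (Theorem~\ref{thm center}), not the trace, and does so by a cosimplicial limit argument rather than by establishing cyclic symmetry of $F$ and analyzing a monad. Concretely, it views $\cH_G$ as an algebra in $\D(BB)$-bimodules, resolves it by the relative bar construction, and realizes $\Z(\cH_G)$ as the totalization of the resulting cosimplicial category of $\D(BB)$-bilinear functor categories. The key technical input is Proposition~\ref{prop int trans adjoint}: over a base stack with smooth diagonal, the integral-transform functor $\Phi:\D(X_1\times_Y X_2)\to\Fun^L_{\D(Y)}(\D(X_1),\D(X_2))$ admits a fully faithful left adjoint. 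This embeds the cosimplicial diagram level-wise into $\D(X^{\times_Y(\bullet+1)}\times_Y\cL Y)$, whose totalization is $\D(G\adjquot G)$ by descent along $q$, and the full subcategory carved out is exactly ${}^\perp\cK(F^r)=\chsh_G$. The identification $\Tr\simeq\Z$ is then a formal consequence of semi-rigidity and pivotality (Proposition~\ref{reversing hh}), so the trace is deduced from the center rather than computed directly. The monodromic case is handled by rerunning the same cosimplicial argument with the ringed-space variants of the functors and then checking $\cK(F^r)=\cK(\tilde F^r)$ directly from conservativity of tensoring with $\cA_X^\vee$.

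Your approach has a genuine gap at the full-faithfulness step, which you yourself flag. The proposed reduction to ``endomorphisms of $\cS_G$ equal Hochschild homology of $\cH_G$'' is a restatement of comparable difficulty, not a simplification; and the d\'evissage to $A=B=1_{\cH_G}$ is problematic because the unit of $\cH_G$ is \emph{not} compact --- this is precisely why the paper introduces semi-rigid as opposed to rigid categories. More fundamentally, the Morita-style presentation over $\D(G)$ cannot be used in the way you suggest: as emphasized in Section~\ref{Tannakian failure}, $\D(G)$ is not $2$-dualizable and $\cH_G$ is not Morita equivalent to it, since their centers already differ ($\Z(\D(G))\simeq\D(G\adjquot G)$ is strictly larger than $\chsh_G$). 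Your attempt to compensate by invoking ``singular-support and unipotent-central-character conditions'' to cut down the image introduces an extra identification --- Lusztig's geometric characterization versus the paper's definition of $\chsh_G$ as the cocompletion of $\mathrm{im}\,F$ --- that is itself a nontrivial theorem. The paper avoids all of this by working with the horocycle correspondence directly as the augmentation of the cobar diagram, never invoking singular support, the Springer classification, or $\D(G)$-module structures.
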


\begin{remark}
Perhaps the most interesting aspect of the identification
of $ \chsh_G$ with the monoidal trace of $\cH_G$ is the consequence that every dualizable
Hecke module $M$ has a character $\chi_M\in \chsh_G$. In other words, character
sheaves arise as characters of such Hecke modules. 
To
illustrate this point, consider a spherical subgroup $K\subset G$ (for
example, $K$ a parabolic subgroup, or the fixed points of an
involution).  Harish Chandra $(\fg,K)$-modules with trivial
infinitesimal character are equivalent to $K$-equivariant $\D$-modules
on $G/B$. The latter naturally provide a Hecke module where the Hecke
category acts via convolution on the right.  The character of this
Hecke module is the object of $\chsh_G$ given by pushing forward the
structure sheaf of ${K}\adjquot {K}$ along the natural projection to
${G}\adjquot {G}$, and then projecting it onto $\chsh_G$. 
For example, the
Springer sheaf $\cS_G$ discussed in Example~\ref{springer} is the
character of the left regular representation of the Hecke category.
\end{remark}

\begin{remark}
The identifications of the theorem endow $\chsh_G$ with rich algebraic
structures. As a monoidal center, $\chsh_G$ is
naturally an $\cE_2$-algebra. 
As
a monoidal trace, $\chsh_G$ comes equipped with an $S^1$-action.
In fact, the $S^1$-action is 
compatible with the loop rotation action on all $\D$-modules
on the loop space $ G\adjquot G=\cL(BG)$.
This is part
of the topological field theory structure we discuss in Section~\ref{TFT} below.
\end{remark}

\begin{remark}
It is interesting to compare the theorem with the independent results of
Bezrukavnikov, Finkelberg and Ostrik~\cite{BFO} on the level of abelian
categories. Namely, they show that the
Drinfeld center of the {abelian} category of Harish Chandra bimodules
is equivalent to the abelian category generated by character
sheaves. 
On the one hand, we do not check the compatibility of our constructions with $t$-structures;
on the other hand, the derived categories we consider are richer than the derived categories of their hearts.
 \end{remark}

\sssn{Langlands duality for character sheaves}

The Hecke categories $\cH_G$ and $\cHtil_G$ are connected by a
Langlands duality, namely the Koszul duality  of
Beilinson-Ginzburg-Soergel~\cite{BGS}, in the Langlands dual form
developed by Soergel~\cite{S} with monoidal structure established by
Bezrukavnikov-Yun~\cite{BY}.  It is most naturally stated in the
mixed setting, but since such structures are beyond the scope of this
paper, we will state an equivalence of underlying two-periodic
categories.  
Passing to two-periodic localizations
is a symmetric monoidal functor
$$
\xymatrix{
\{-\}^{per}:\St_\CC\ar[r] &  \St_{\CC[u,u\inv]} &  \cC\ar@{|->}[r] & \cC^{per}=\cC\otimes_{\CC} \CC[u,u\inv] 
& \deg u = 2
}
$$ 
 compatible with the formation of monoidal traces.
For $G$ and its Langlands dual group
$G^\vee$, the Koszul duality of~\cite{BGS, S, BY} localizes to an
equivalence of two-periodic Hecke categories
$$ \cHtil_{G}^{per}\simeq \cH_{G^\vee}^{per} 
$$ 
Thus Theorem~\ref{intro Hecke thm} implies  
a two-periodic equivalence between unipotent character sheaves on Langlands dual groups.

\begin{cor}
There is a canonical equivalence of two-periodic dg categories of
character sheaves on Langlands dual groups
$$ \chsh_{G}^{per} \simeq \chsh_{G^\vee}^{per}.
$$
\end{cor}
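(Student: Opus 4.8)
The plan is to derive the equivalence formally by combining Theorem~\ref{intro Hecke thm}(2) with the monoidal Koszul duality $\cHtil_G^{per}\simeq\cH_{\Gv}^{per}$ and the compatibility of two-periodic localization with monoidal traces.

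First I would record the behaviour of $\{-\}^{per}$. Since $\cC\mapsto\cC\ot_\CC\CC[u,u\inv]$ is base change along the map $\CC\to\CC[u,u\inv]$ of commutative algebras, the functor $\{-\}^{per}:\St_\CC\to\St_{\CC[u,u\inv]}$ is symmetric monoidal and preserves all colimits; in particular it carries $\Alg(\St_\CC)$ to $\Alg(\St_{\CC[u,u\inv]})$ and commutes with the simplicial bar construction $\cC\ot_{\cC\ot\cC^{op}}\cC$ computing $\Tr$. This yields, for every $\cC\in\Alg(\St_\CC)$, a natural equivalence $\Tr(\cC)^{per}\simeq\Tr(\cC^{per})$ in $\St_{\CC[u,u\inv]}$, which is exactly the compatibility asserted in the paragraph preceding the corollary. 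I would also note that $\{-\}^{per}$ preserves compact generators, so it is compatible with the ``generated under colimits'' clauses in the definitions of $\cHtil_G$ and $\chsh_G$.

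Next I would feed in Theorem~\ref{intro Hecke thm}(2), which identifies $\chsh_G\simeq\Tr(\cHtil_G)$ and $\chsh_{\Gv}\simeq\Tr(\cH_{\Gv})$. Localizing and applying the previous step gives
$$\chsh_G^{per}\;\simeq\;\Tr(\cHtil_G)^{per}\;\simeq\;\Tr(\cHtil_G^{per}),\qquad \chsh_{\Gv}^{per}\;\simeq\;\Tr(\cH_{\Gv})^{per}\;\simeq\;\Tr(\cH_{\Gv}^{per}).$$
Then I would invoke the monoidal form of Koszul duality of Bezrukavnikov-Yun~\cite{BY}, in the Langlands-dual framing of Soergel~\cite{S} building on~\cite{BGS}: the equivalence $\cHtil_G^{per}\simeq\cH_{\Gv}^{per}$ is an equivalence of algebra objects in $\St_{\CC[u,u\inv]}$, so $\Tr$, being a functor on $\Alg(\St_{\CC[u,u\inv]})$, carries it to an equivalence $\Tr(\cHtil_G^{per})\simeq\Tr(\cH_{\Gv}^{per})$. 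Chaining the three displays produces $\chsh_G^{per}\simeq\chsh_{\Gv}^{per}$.

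The main obstacle is ensuring the Koszul duality input is used at the right structural level: a bare equivalence of underlying $\CC[u,u\inv]$-linear categories says nothing about traces, so one genuinely needs the monoidal upgrade of~\cite{BGS,S}, which is why~\cite{BY} is invoked. A secondary, harmless point to spell out in a careful write-up is the compatibility of $\{-\}^{per}$ with $\Tr$ used above: one exhibits $\Tr(\cC)$ as the colimit over $\Delta^{op}$ of the two-sided bar construction, and uses that $\{-\}^{per}$, being a symmetric monoidal left adjoint, commutes with this colimit and sends the bar construction for $\cC$ to that for $\cC^{per}$.
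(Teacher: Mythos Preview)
Your argument is correct and is precisely the reasoning the paper intends: the paper states the corollary immediately after recording the monoidal Koszul duality $\cHtil_G^{per}\simeq\cH_{G^\vee}^{per}$ and the compatibility of $\{-\}^{per}$ with traces, and deduces it from Theorem~\ref{intro Hecke thm}(2). You have simply made the chain $\chsh_G^{per}\simeq\Tr(\cHtil_G)^{per}\simeq\Tr(\cHtil_G^{per})\simeq\Tr(\cH_{G^\vee}^{per})\simeq\Tr(\cH_{G^\vee})^{per}\simeq\chsh_{G^\vee}^{per}$ explicit, and correctly flagged that the monoidal upgrade of Koszul duality from~\cite{BY} is the essential input.
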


\ssn{Functional analysis of $\D$-module categories}\label{tools}
The proof of Theorem~\ref{intro Hecke thm} relies on 
the formalism of $\D$-module on stacks, as developed in~\cite{finiteness,dennisnick}, see Section~\ref{D modules}.
Our main interest is in results relating functors between $\D$-module categories
with integral transforms: given varieties $X,Y$ and a $\D$-module $\cK$ on the product
$X\times Y$, one defines a functor on derived categories of
$\D$-modules
 $$
 \xymatrix{
 \D(X)\ar[r] & \D(Y)
 &
 \cF\ar@{|->}[r] & \pi_{Y*}(\pi_X^!\cF\otimes \cK).
 }
 $$
by pulling back from $X$ to the product $X\times Y$, tensoring with the integral kernel $\cK$,
 and then pushing forward to $Y$
via the natural diagram
$$
\xymatrix{X &X \ti Y\ar[l]_-{\pi_X} \ar[r]^-{\pi_Y}& Y.
}
$$

\sssn{Integral transforms for quasicoherent sheaves}
In the context of quasicoherent sheaves, there is a very tight connection between
categorical operations and geometric operations on the underlying stacks.
In our paper \cite{BFN} with John Francis, we studied the
homotopical algebra of categories of quasicoherent sheaves, and in
particular proved a result identifying functors on quasicoherent
sheaves with integral transforms, generalizing results of Orlov~\cite{Orlov},
Bondal, Larsen and Lunts~\cite{BLL} and To\"en~\cite{Toen
  dg}. Namely, we
introduced the class of {perfect stacks} $X$, which as mentioned above are characterized (among quasi-compact stacks
with affine diagonal) by the property that $\qc(X)$ is compactly generated and rigid. This class
 includes all (quasi-compact and separated)
schemes as well as most common stacks in characteristic zero, and is
closed under natural operations such as fiber
products. 
For perfect stacks $X_1$,
$X_2$ over a perfect stack $Y$ we showed that the natural maps are equivalences
$$
\xymatrix{
\qc(X_1)\ot_{\qc(Y)} \qc(X_2) \ar[r]^-\sim & \qc(X_1\times_Y X_2)
\ar[r]^-\sim & \Fun^L_{\qc(Y)} (\qc(X_1) , \qc({X_2})).
}
$$
In other words, $\qc(Y)$-linear integral transforms  between $\qc(X_1)$ and $\qc(X_2)$ are identified
both with all $\qc(Y)$-linear functors  and with the relative tensor product of the 
respective categories. 

When $X = X_1 = X_2$, we also studied the algebra structure of convolution on such ``matrix" algebras of quasicoherent
sheaves. The dg category of integral transforms $\qc(X\times_Y
X)\simeq\Fun_{\qc(Y)}(\qc(X),\qc(X))$ has a natural multiplication,
given by convolution, or equivalently by
composition of functors, making it into a monoidal dg category.  In
the case $Y$ is a point, this is a categorified version of the algebra
of matrices with entries labelled by $X$, while in general it is a
version of $Y$-block diagonal matrices. Thus one expects the monoidal center and trace of such an algebra to
be a categorified version of
functions on $Y$.  Indeed, we showed that they are identified with the category $\qc(\cL Y)$ of sheaves on
the derived loop space of $Y$. (In fact $\qc(X\times_Y X)$ is Morita equivalent to $\qc(Y)$ under mild hypotheses, see~\cite{BFN appendix}.)


\begin{example} If we take $X=pt/B$ to be the classifying stack of the Borel and $Y=pt/G$, the center and trace of the quasicoherent Hecke category $\qc(B\bs G/B)$ are identified with all quasicoherent sheaves $\qc(G\adjquot G)$.
\end{example}

\sssn{Integral transforms for equivariant $\D$-modules}\label{Tannakian failure}
A modification of the techniques of \cite{BFN} provides an analogous description of integral transforms
in the $\D$-module setting when the base $Y$ is a scheme and the projection $X_1\to Y$ is a relative Deligne-Mumford stack,
or more generally, a {\em safe} morphism in the sense of \cite{finiteness}.

\begin{theorem}[Corollary~\ref{cor selfdual}] Let $X_1\to Y$ denote a Deligne-Mumford stack over a scheme, and $X_2\to Y$  an arbitrary stack. 
Then the natural maps are equivalences
$$
\xymatrix{
\D(X_1) \ot_{\D(Y)} \D(X_2) 
\ar[r]^-\sim &  
\D(X_1\ti_Y X_2) 
\ar[r]^-\sim & 
\Fun^L_{\D(Y)}(\D(X_1), \D(X_2))
}$$
\end{theorem}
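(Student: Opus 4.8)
The plan is to adapt the argument of \cite{BFN} for quasicoherent sheaves to the $\D$-module setting, using the fact that $\D$-modules on nice stacks form a symmetric monoidal functor out of (a suitable category of) stacks, satisfying base change and a projection formula. The three maps to be compared are the ``external product then restrict'' map $\D(X_1)\otimes_{\D(Y)}\D(X_2)\to\D(X_1\times_Y X_2)$, and the ``integral transform'' map $\D(X_1\times_Y X_2)\to\Fun^L_{\D(Y)}(\D(X_1),\D(X_2))$ sending a kernel $\cK$ to $\cF\mapsto\pi_{X_2 *}(\pi_{X_1}^!\cF\otimes\cK)$. It suffices to show the composite and the first map are equivalences (then the second follows formally). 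I would first observe that $\D(X)$ is compactly generated and self-dual when $X\to Y$ is a Deligne--Mumford (more generally safe) morphism, $Y$ a scheme: this is the key input and the place where the ``safe''/DM hypothesis is used, since for safe maps $f$ the functor $f_*$ is continuous and $f_*$, $f^!$ satisfy the ambidexterity/base-change needed for $\D(X_1)$ to be dualizable over $\D(Y)$ with $\D(X_1)^\vee\simeq\D(X_1)$ (this replaces the role of perfectness and the affine diagonal condition in \cite{BFN}).

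The main steps, in order: \emph{(i)} Record the formal properties of the assignment $X\mapsto\D(X)$ on the relevant class of stacks over $Y$ --- it is symmetric monoidal for the $!$-tensor structure, sends $X_1\times_Y X_2$ to $\D(X_1)\otimes_{\D(Y)}\D(X_2)$ when one factor is affine (or more generally using descent), satisfies base change $g^!f_*\simeq f'_* g'^!$ and the projection formula, using \cite{finiteness, dennisnick}. \emph{(ii)} Reduce to the affine case: both sides of all three arrows satisfy descent in $X_1$ and $X_2$ (for the smooth or fppf topology, after passing to suitable covers), so since any stack admits a presentation by affines one reduces the statement to $X_1, X_2$, and $Y$ affine schemes, where $\D(X_1)\otimes_{\D(Y)}\D(X_2)\simeq\D(X_1\times_Y X_2)$ is essentially the $\D$-module analogue of the classical affine statement (base change over a ring). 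Here is where the DM hypothesis enters through the reduction: to descend from an affine atlas of $X_1$ one needs $X_1\to Y$ to be safe so that the Čech diagram of $\D$-module categories behaves well (pushforwards continuous, cohomological descent). \emph{(iii)} Identify $\D(X_1\times_Y X_2)$ with $\Fun^L_{\D(Y)}(\D(X_1),\D(X_2))$ by combining the self-duality $\D(X_1)^\vee\simeq\D(X_1)$ with the tensor identification: $\Fun^L_{\D(Y)}(\D(X_1),\D(X_2))\simeq\D(X_1)^\vee\otimes_{\D(Y)}\D(X_2)\simeq\D(X_1)\otimes_{\D(Y)}\D(X_2)\simeq\D(X_1\times_Y X_2)$, and check this composite is the stated integral-transform functor by unwinding the (co)evaluation maps, which come from the diagonal of $X_1$ over $Y$ and its $!$-pushforward/pullback.

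The hard part will be step \emph{(i)}--\emph{(ii)}: establishing that $\D(X_1)$ is self-dual and dualizable as a $\D(Y)$-module category in enough generality, and that the external-product map is an equivalence, without the crispness that perfectness and an affine diagonal provide in the quasicoherent setting. The subtlety is that $\D$-module pushforward along a non-representable or non-safe morphism need not preserve compactness or even be continuous, so the ``safe'' hypothesis is doing real work; one must check that a Deligne--Mumford morphism to a scheme is safe (or handle it directly), that the relevant Čech descent spectral sequences for $\D$-module categories converge, and that the compact generators of $\D(X_1\times_Y X_2)$ are exactly the external products of compact generators pushed around --- this last point, controlling compact objects under the tensor product of module categories, is the technical crux. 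Once self-duality and the affine base-change equivalence are in hand, the remaining identifications are formal manipulations in $\St_\CC$ of the kind already used in \cite{BFN}.
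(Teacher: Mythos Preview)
Your overall architecture for step (iii) --- deduce the functor identification from self-duality plus the tensor identification --- matches the paper exactly, and your remark that safeness is needed to make $f_*$ continuous is on target (though see below for where it enters). The substantive divergence is in how you propose to prove the first equivalence
\[
\D(X_1)\otimes_{\D(Y)}\D(X_2)\;\simeq\;\D(X_1\times_Y X_2).
\]

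The paper does \emph{not} reduce to the affine case by descent. Instead it exploits a feature special to $\D$-modules that has no quasicoherent analogue: Kashiwara's lemma. Since $Y$ is a separated scheme, its diagonal is a closed embedding, and hence so is the relative diagonal $j_{-1}:X_1\times_Y X_2\hookrightarrow X_1\times X_2$ and each of the higher diagonals $j_k$ in the bar construction. Kashiwara then gives $j^!j_*\simeq\Id$, which furnishes extra degeneracies and upgrades the augmented simplicial category computing the bar resolution to a \emph{split} one; split simplicial objects are colimit diagrams in any $\infty$-category. This handles the tensor identification in one stroke, with no safeness hypothesis on either $X_i\to Y$. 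The safe/Deligne--Mumford assumption on $X_1\to Y$ enters only afterwards, in building the counit $c=f_*\Delta^!$ of the self-duality (Theorem~\ref{thm selfdual}): one needs $f_*$ continuous.

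Your descent-to-affines route is not obviously wrong, but it is harder than you make it sound and misses the paper's shortcut. Two concrete issues: first, the relative tensor product $\D(X_1)\otimes_{\D(Y)}\D(X_2)$ is a \emph{colimit} (bar construction), and you assert it satisfies descent in $X_1$ and $X_2$, which are \emph{limits}; reconciling these requires the ``reverse the diagram by passing to adjoints'' maneuver and some care, not a one-liner. Second, even in the fully affine case you wave at ``the $\D$-module analogue of the classical affine statement,'' but $\D$-module pushforward along affine maps is not conservative (this is exactly the Tannakian failure the paper flags in the introduction), so the affine base case is not as parallel to \cite{BFN} as you suggest. The paper's Kashiwara argument sidesteps all of this.
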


The identification of tensors, integral transforms and functors
as above fails badly when the base is a stack, as in our motivating case $Y=BG$.
The fundamental obstruction is the inherently topological
nature of $\D$-modules.  Pushing forward along an affine map almost
always loses information (that is, it fails to be conservative): for
example, unlike quasicoherent sheaves, many nontrivial $\D$-modules
have no global flat sections at all.  Thus even for stacks with affine
diagonal, one can not reconstruct $\D$-modules on a fiber product from
algebraic operations on $\D$-modules on the factors (for stacks with
{finite} diagonal, for example for schemes, this problem disappears).
In short, the Tannakian theory of $\D$-modules is not rich enough to
capture the geometric theory.  As a result, tensor and functor
categories are not identified with all integral kernels: there
are adjunctions exhibiting the former as pale shadows of the latter.
For example, the category $\D(G)$ cannot be reconstructed from
$\D(pt)=\Mod_\CC$ as a module category for
$\D(BG)=C_*(G)\on{-mod}$ where $C_*(G)$ is the algebra of singular chains on $G$.

In Section~\ref{over stacks}, we prove the following partial result. We make the very strong assumption that 
the stack $Y$ is a classifying stack
of an affine group, thus ensuring that the technical conditions that $Y$ is smooth with affine diagonal. This setup  suffices for $\D(Y)$
to be semi-rigid under tensor product, a condition which typically fails  for schemes: compact objects in $\D(Y)$ for a scheme
 are coherent $\D$-modules, but only flat vector bundles are dualizable.
In this restricted setting, we then find that tensor products and linear functors with source a relative Deligne-Mumford stack, or more generally safe stack,
are identified with each other via the natural map factoring through a full inclusion into  integral transforms.

\begin{theorem}[Corollary~\ref{selfdual over stack}, Proposition~\ref{prop int trans adjoint}]
Let $X_1\to Y=BG$ be a relative Deligne-Mumford stack over the classifying stack of an affine group, and $X_2\to Y$ an arbitrary stack.
Then the natural map from tensors to functors is an equivalence, and factors through the integral transform construction
$$\xymatrix{&\D(X_1 \times_Y X_2)\ar[dr]^-{\Phi}&\\
\D(X_1)\ot_{\D(Y)} \D(X_2) \ar[rr]^-{\sim} \ar[ur]^-{\Psi}  &&\Fun^L_{\D(Y)}(\D(X_1), \D(X_2))}$$
Moreover, $\Phi$ has a fully faithful left adjoint.
\end{theorem}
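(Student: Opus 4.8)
The plan is to reduce the statement over $Y=BG$ to the already-established absolute case (the Theorem attributed to Corollary~\ref{cor selfdual}), by descent along the atlas $pt\to BG$. First I would note that $\D(BG)$ is semi-rigid under tensor product --- this is where the hypothesis that $Y$ is the classifying stack of an \emph{affine} group enters, guaranteeing that $BG$ is smooth with affine diagonal --- so that the relative tensor product $\D(X_1)\ot_{\D(Y)}\D(X_2)$ is computed as the geometric realization (totalization of the Cech bar construction) $|\D(X_1)\ot_{\D(Y)}\D(Y)^{\ot\bullet}\ot_{\D(Y)}\D(X_2)|$, where each term involves only tensors over the base point. On each simplicial level the absolute Theorem identifies $\D(X_1)\ot_{\m_\CC}\cdots\ot_{\m_\CC}\D(X_2)$ with $\D(X_1\ti_Y G^{\ti\bullet}\ti_Y X_2)$ and with the corresponding functor category, \emph{provided} the source $X_1$ is relative Deligne-Mumford (equivalently safe) over $Y$, which is exactly the hypothesis; the safeness condition is what makes pushforward along $X_1\ti_Y(-)\to(-)$ behave well enough for base change. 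One then checks that the Cech diagram of functor categories assembles, via the comonadic/descent formalism for $\D(BG)$-linear categories (using that $\D(X_1)$ is dualizable over $\D(Y)$, again by safeness), to $\Fun^L_{\D(Y)}(\D(X_1),\D(X_2))$, and that the diagram of geometric incarnations assembles to $\D(X_1\ti_Y X_2)$. Taking geometric realizations of the two sides of the level-wise equivalence $\Psi\circ(\text{id}) $ gives that the composite $\Phi\circ\Psi$ is an equivalence on realizations; but the realization of the right-hand geometric side need not be $\D(X_1\ti_Y X_2)$ itself (this is precisely the failure discussed in Section~\ref{Tannakian failure}), so one only gets that $\Psi$ lands in $\D(X_1\ti_Y X_2)$ as a full subcategory after realization, and $\Phi$ is the inclusion.

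Next I would identify the left adjoint to $\Phi$ explicitly. Since $X_1\to Y$ is safe Deligne-Mumford, the pushforward $\pi_*$ along the projection $X_1\ti_Y X_2\to X_1\ti_Y X_2$ into the bar-resolution target admits a continuous left adjoint built from $!$-pullback, and $\D(X_1)$ being a dualizable (indeed self-dual up to twist by the relative dualizing complex) $\D(Y)$-module lets us transpose: concretely, $\Psi$ is the composition of the self-duality datum $\D(X_1)^\vee\simeq\D(X_1)$ with the evaluation into $\D(X_1\ti_Y X_2)$, and its partially-defined inverse on the essential image is induced by the (de Rham) pushforward along $X_1\ti_Y X_2 \to X_1\ti_Y X_2$ twisted appropriately. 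The left adjoint to $\Phi$ is then the kernel-level operation $\cK\mapsto$ (its image under the counit of the $(f_!,f^!)$ or $(f^*,f_*)$ adjunction associated to the safe morphism $X_1\to Y$), which is well-defined and continuous because safeness guarantees the relevant functors preserve colimits and satisfy base change; fully faithfulness of this left adjoint amounts to the unit being an equivalence, which follows from the level-wise identifications combined with the fact that $\D(X_1)\ot_{\D(Y)}\D(X_2)$ already \emph{is} the functor category, so the composite (left adjoint)$\circ\Phi\circ\Psi$ is the identity.

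The main obstacle, I expect, is making the descent argument genuinely work in the $\D$-module setting despite the ``topological" failure flagged in Section~\ref{Tannakian failure}: one cannot simply invoke a Tannakian reconstruction of $\D$-modules on $BG$ from $\D$-modules on $pt$, so the comonadicity of $\D(pt)$ over $\D(BG)$ --- or equivalently the validity of $\ast$-descent for $\D(-)$-linear module categories along $pt\to BG$ --- must be established separately. This is where the smoothness and affine-diagonal properties of $BG$ are essential: they ensure $\D(BG)$ is rigid/semi-rigid so that dualizable module categories form a well-behaved symmetric monoidal category in which descent holds, and they ensure the Cech resolution of $\D(BG)$ by $\D(G^{\ti\bullet})$ is a resolution by \emph{dualizable} objects, so that tensoring with the dualizable module $\D(X_1)$ commutes with the totalization. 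One must also be careful that ``relative Deligne-Mumford over $BG$" is preserved under the base changes $X_1\ti_Y G^{\ti n}$ appearing in the bar construction --- this is immediate --- and that the safeness hypothesis propagates, so that the absolute theorem applies on each simplicial level. Granting these descent and stability points, assembling the level-wise equivalences and reading off the adjoint $\Phi$ together with its fully faithful left adjoint is then essentially formal.
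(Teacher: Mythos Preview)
Your strategy --- reduce to the absolute case by descent along $pt\to BG$ --- is not what the paper does, and it has a genuine gap.

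For the equivalence of tensors and functors, the paper's argument is purely algebraic and uses no resolution at all. Since $Y=BG$ has smooth affine diagonal, Proposition~\ref{BG semirigid} gives that $\D(Y)$ is semi-rigid; Proposition~\ref{bimodule dual} then says that over a semi-rigid base any module dualizable in $\St_\CC$ is automatically dualizable over that base. As $\D(X_1)$ is compactly generated it is self-dual in $\St_\CC$, hence self-dual over $\D(Y)$, and $\D(X_1)\ot_{\D(Y)}\D(X_2)\simeq\Fun^L_{\D(Y)}(\D(X_1),\D(X_2))$ follows formally. The safeness of $X_1\to Y$ plays no role here. Your bar construction, by contrast, is confused: the bar resolution of the relative tensor has terms $\D(X_1)\ot\D(Y)^{\ot k}\ot\D(X_2)\simeq\D(X_1\times Y^k\times X_2)$ with absolute products and $Y=BG$ as an honest factor; the group $G$ never appears. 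You have conflated this with the \v Cech nerve of $pt\to BG$. More seriously, descending along $pt\to BG$ runs straight into the Tannakian failure you yourself flag: the paper's own example shows $\D(pt)\ot_{\D(BG)}\D(pt)\simeq\langle\cO_G\rangle\subsetneq\D(G)$, so base-changing $\D(BG)$-module categories to the point loses information and the levelwise comparison with the absolute theorem (which requires a \emph{scheme} base) is simply not available. Semi-rigidity of $\D(BG)$ does not rescue this descent; it \emph{replaces} it.

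For the fully faithful left adjoint to $\Phi$, the paper does use a cosimplicial resolution, but a specific one: the right-adjoint form of the bar diagram, with terms $\D(X_1\times Y^k\times X_2)$ and coface maps given by \emph{pushforward} along base-changes of the diagonal $Y\to Y\times Y$, augmented by $j_{-1*}\colon\D(X_1\times_Y X_2)\to\D(X_1\times X_2)$. Its totalization is the functor category (this is what ``tracing back through Proposition~\ref{bimodule dual}'' means). The single thing one checks is that this augmented cosimplicial object is left adjointable in the sense of \cite[Corollary~6.2.4.3]{HA}: the coface pushforwards admit left adjoints (shifted $!$-pullbacks, since the diagonal of $Y$ is smooth) and the resulting Beck--Chevalley squares commute by base change. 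Lurie's criterion then directly outputs the fully faithful left adjoint to the induced map $\D(X_1\times_Y X_2)\to\lim$. Your description of $\Phi^L$ in terms of pushforwards along $X_1\times_Y X_2\to X_1\times_Y X_2$ and unspecified $(f_!,f^!)$ adjunctions does not identify this mechanism, and your proposed proof of full faithfulness (``the composite is the identity because tensors already equal functors'') conflates $\Phi^L$ with $\Psi$, which are different maps.
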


\begin{example}
If $X_1=X_2=pt\to Y=BG$, so that $X\times_Y X=G$, we find that the tensor product and functor categories
are identified with the full subcategory $$\End_{\D(BG)}(\D(pt))\simeq \langle {\cO_G} \rangle \subset \D(G)$$ 
generated by the structure sheaf.
\end{example}

Using this more subtle understanding of the relation between integral transforms and functors, we calculate
the center and trace of  Hecke categories. Unlike the quasicoherent case, we do not see all sheaves on $\cL (BG)=G\adjquot G$ as the center and trace, but precisely the full subcategory of sheaves that can be accessed through the horocycle correspondence, or in other words, unipotent character sheaves.

\ssn{Topological field theory}\label{TFT}

 Our results naturally fit into and were motivated by the structure
 of topological field theory (TFT).  We will not give a formal introduction
 to TFT but rather remind the reader of its broad
 outline informally and illustrate it via the example of finite group
 gauge theory. We refer to \cite{jacob TFT} for a detailed overview of
 the $\oo$-categorical setting for TFT (including
 the $\oo$-version of $n$-categories) and the Cobordism Hypothesis. We only use the $(\infty,2)$-categorical language 
 for motivation and it does not recur elsewhere in the paper.

\subsubsection{2-dimensional TFT}

An extended framed 2-dimensional TFT is a symmetric monoidal functor
out of the symmetric monoidal $(\oo,2)$-category $2Bord^{fr}$ of
2-dimensional framed bordisms (with monoidal structure given by disjoint union):
\begin{enumerate}
\item[$\bullet$] objects: ($2$-dimensionally
  framed) $0$-manifolds, 
\item[$\bullet$] 1-morphisms: ($2$-dimensionally framed) $1$-dimensional bordisms
  between $0$-manifolds,
\item[$\bullet$] 2-morphisms: classifying spaces of (framed)
  $2$-dimensional bordisms between $1$-bordisms.
\end{enumerate}
In other words, all $n$-morphisms for $n>2$ are invertible, and
together form (the fundamental $\infty$-groupoid of) classifying
spaces of framed bordisms.
 
 One can similarly define the oriented 
bordism l $(\oo,2)$-category $2Bord$, in which one
extends from framed manifolds to all oriented manifolds and their
bordisms. 

In general, if the target is a plain discrete $2$-category, then one
obtains an ordinary functor out of the $2$-category with the same
objects and $1$-morphisms but $2$-morphisms given by 2-dimenisonal
bordisms up to differomorphism. This setting is studied in detail in \cite{schommer}. 

In \cite{jacob TFT}, Lurie outlines
the proof of a general $\oo$-categorical version of the Cobordism
Hypothesis formulated by Baez-Dolan \cite{baezdolan}. The
2-dimensional case which we use was inspired by work of Costello
\cite{Costello} and Kontsevich-Soibelman \cite{KS} on open-closed field theory.  The
Cobordism Hypothesis specifies increasingly strong finiteness conditions, known as $d$-dualizability,
 that allow an object  
of
a symmetric monoidal $(\oo, d)$-category to be ``integrated'' over
 framed manifolds of dimension at
most $d$, thus defining an extended $d$-dimensional TFT.
Moreover, the space of $d$-dualizable objects carries an action of
$O(d)$, and objects on which the action of $SO(d)$ has been trivialized give
rise to field theories defined on all oriented manifolds of
dimension at most $d$.

\subsubsection{Toy model: finite group gauge theory}

To place our results in the framework of TFT, it is illuminating to return to the toy model of a finite group $\Gamma$. 
There is an oriented 2-dimensional TFT  $Z_\Gamma$ called Dijkgraaf-Witten
theory~\cite{DW} which  efficiently
encodes all of the familiar structures in the complex
representation theory of $\Gamma$ discussed earlier.
(One could consult \cite{schommer} for details in all of the discussion to follow, in particular for the construction of 2-dimensional oriented TFTs
from symmetric Frobenius algebras.
See also \cite{Freed} for a detailed study of $Z_\Gamma$,
 and \cite{FHLT} for a discussion of $Z_\Gamma$ in the setting of the
Cobordism Hypothesis.)

The target of $Z_\Gamma$
is the Morita $2$-category of 
algebras $Alg_\CC$ with the following structure:
\begin{enumerate}
\item[$\bullet$] objects: associative algebras over $\CC$,
\item[$\bullet$] 1-morphisms: bimodules (flat over the source),
\item[$\bullet$] 2-morphisms: morphisms of bimodules.
\end{enumerate}
The symmetric monoidal structure of $Alg_\CC$ is given by tensor
product of algebras with unit $\CC$.
By assigning to an algebra $A$ its category $\Perf_A$ of perfect
modules (summands of finite colimits of free modules), we can identify the
Morita 2-category $Alg_\CC$ with a full subcategory of the 2-category
$AbCat_\CC$ of small $\CC$-linear abelian categories, exact functors
and natural transformations.

The field theory $Z_\Gamma$ assigns to each cobordism $M$ a
linearization of the space of $\Gamma$-gauge fields on $M$, or in other words, the
orbifold of principal $\Gamma$-bundles or Galois $\Gamma$-covers over
$M$.  In particular,  it assigns the following to closed $0$,
$1$ and $2$-manifolds:
\begin{enumerate}
\item[$\bullet$] To a point, $Z_\Gamma$ assigns the group algebra:
$$
Z_\Gamma(pt)=\CC [\Gamma]\in Alg_\CC.
$$ 
Alternatively, we can pass to the category of finite-dimensional complex modules:
$$
Z_\Gamma(pt) = \Rep^{\findim}_\CC(\Gamma) \in AbCat_\CC.
$$
This is the category of finite-dimensional algebraic vector bundles on the
orbifold of $\Gamma$-bundles on a point.

\item[$\bullet$] To a circle, $Z_\Gamma$ assigns 
  class functions:
$$
Z_\Gamma (S^1)=\CC [\Gamma]^\Gamma=\CC[\Gamma\adjquot {\Gamma}] \in \Vect^{\findim}_\CC =1\Hom_{AbCat_\CC}(\Vect^{\findim}_\CC, \Vect^{\findim}_\CC),
$$
This is the vector space of functions on the orbifold of $\Gamma$-bundles on the circle.

\item[$\bullet$] 
To a closed surface, $Z_\Gamma$ counts $\Gamma$-bundles:
$$
Z_\Gamma(\Sigma)=\# \{\Hom(\pi_1(\Sigma),\Gamma)/\Gamma\}\in \CC =  2\Hom_{AbCat_\CC}(\CC,\CC)
$$ 
where as usual a bundle $\cP$ is weighted by $1/\Aut(\cP)$. This is the volume of  
the orbifold of $\Gamma$-bundles on the surface.
\end{enumerate}

From the point of view of the Cobordism Hypothesis, we only have to
specify that we assign the group algebra $\CC[\Gamma]$ to a point to
determine the rest of the TFT structure. Any object of $Alg_\CC$ is 1-dualizable, with dual given by 
the opposite algebra. The requirement of 2-dualizability however is extremely restrictive: as proven in \cite{schommer}, 
2-dualizable objects of $ Alg_\CC$ are precisely separable algebras, i.e., algebras for which $A$ is projective as an $A$-bimodule.
Over $\CC$, separable algebras are precisely finite-dimensional semi-simple algebras.
Invariance under $SO(2)$ amounts to the data of a non-degenerate trace, or in other words, the structure of a symmetric Frobenius algebra. 


\subsubsection{TFTs from semi-rigid categories}

Our main results can be viewed as the construction and partial description
of an oriented 2-dimensional TFT valued in a categorified analogue of the Morita 2-category
$Alg_\CC$. Namely, we replace $\CC$-algebras by $\CC$-linear monoidal categories, or more precisely, 
algebra objects in  the symmetric monoidal
$\oo$-category $\St_\CC$. We denote by $Alg_{(1)}(\St_\CC)$ the Morita $(\oo,2)$-category with
objects algebras in  $\St_\CC$, 1-morphisms bimodule categories and
2-morphisms the spaces of intertwiners between bimodule categories.
(See \cite{jacob TFT} for a more precise description; we reiterate that we only use the
$(\oo,2)$-language for informal motivation). 

As with $Alg_\CC$, any object of $Alg_{(1)}(\St_\CC)$ is 1-dualizable,
with dual given by the monoidal opposite.
The much stronger condition of 2-dualizability of $A\in Alg_{(1)}(\St_\CC)$
breaks down into two parts. First, the underlying category of $A$ must be dualizable
as a plain category. In practice, this is  an easily satisfied condition, guaranteed for example by the compact generation of 
$A$, in which case the dual is a modification of
the opposite category of $A$. 
Second, one needs $A$ to be dualizable as an $A$-bimodule,
a natural analogue of separability. Dualizability as a bimodule 
is closely related to the abundance of monoidal duals of objects of $A$:

\begin{theorem}[Theorem \ref{perfect dualizable}]
Any semi-rigid category $A\in Alg_{(1)}(\St_\CC)$ is 2-dualizable. 
\end{theorem}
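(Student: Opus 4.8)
The plan is to unwind full dualizability in the Morita $(\infty,2)$-category $Alg_{(1)}(\St_\CC)$ into two conditions and to deduce each from semi-rigidity. Every algebra object of $\St_\CC$ is $1$-dualizable with Morita dual the monoidal opposite $A^{op}$, and both the evaluation and coevaluation $1$-morphisms of this self-duality are incarnations of the diagonal $A$-bimodule; by the standard criterion that an object of a symmetric monoidal $(\infty,2)$-category is fully dualizable as soon as it is dualizable and its evaluation and coevaluation admit right adjoints (see \cite{jacob TFT}), $A$ is $2$-dualizable exactly when $(i)$ the category $A$ is dualizable as an object of $\St_\CC$, and $(ii)$ $A$ is dualizable as an $A$-bimodule, i.e.\ as a module over $A\otimes A^{op}$. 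These are the categorified analogues of ``dualizable over $\CC$'' and ``separable'' in the classical characterization of $2$-dualizable $\CC$-algebras. Condition $(i)$ is immediate from compact generation alone: a compactly generated stable presentable category is dualizable in $\St_\CC$, with dual the Ind-completion of the opposite of its subcategory of compact objects and coevaluation assembled from the compact generators (see \cite{HA}; compare \cite{BFN}).

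Condition $(ii)$ is the heart of the matter, and is where the dualizability of compact objects — the extra content of semi-rigidity beyond compact generation — enters. The category of $A$-bimodules is monoidal under $\otimes_A$ with the diagonal bimodule $A$ as its unit, and $(ii)$ asks for a dual bimodule $A^{!}$ together with unit and counit $1$-morphisms witnessing the duality; equivalently, it asks that the multiplication functor $A\otimes A\to A$ admit a colimit-preserving, $A$-bilinear right adjoint. I would build this data from the duality data of the compact generators: for a compact object $x$ with right dual $x^\vee$, the endofunctor $x\otimes(-)$ of $A$ has continuous right adjoint $x^\vee\otimes(-)$, and, $A$ being compactly generated, these adjunctions glue along the colimit expressing the diagonal bimodule in terms of the $x$'s. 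It is convenient to organize this as: viewed as a module over $A\otimes A^{op}$, the diagonal bimodule is generated under colimits by the left- and right-dualizable objects coming from the compact generators of $A$, and then to invoke the general principle — modeled on the analysis of modules over rigid monoidal categories (cf.\ \cite{DGcat}), but with the compactness of the unit dropped — that a module generated by dualizable objects is itself dualizable.

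The main obstacle is precisely this last step: promoting the objectwise duality of the compact generators to an honestly colimit-preserving, bimodule-linear duality of the \emph{diagonal} bimodule, while simultaneously keeping track of the left and right $A$-module structures (this is where one genuinely uses that compact objects are \emph{both} left and right dualizable, rather than one-sidedly). The fact that the unit $1_A$ need not be compact — the reason for working with semi-rigid rather than rigid categories in the first place — does not obstruct $(ii)$, since that condition concerns only dualizability, and not compactness, of the diagonal bimodule; it is, however, exactly what prevents one from quoting the rigid case verbatim. Combining this with condition $(i)$ and the reduction above then yields the $2$-dualizability of $A$.
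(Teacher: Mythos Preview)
Your high-level decomposition matches the paper's (Lemma~\ref{separable}): $2$-dualizability reduces to $A$ being dualizable in $\St_\CC$ (condition $(i)$, immediate from compact generation via Proposition~\ref{jacob duality}) and $A$ being $A^e$-dualizable (condition $(ii)$). The gap lies entirely in $(ii)$. Your reformulation of $(ii)$ as ``the multiplication $A\otimes A\to A$ admits a colimit-preserving $A$-bilinear right adjoint'' is not obviously equivalent to $A^e$-dualizability: continuity and $A$-bilinearity of $\mu^r$ do follow from semi-rigidity (Lemmas~\ref{quasiproper action} and~\ref{perfect adjoints}), but they do not by themselves furnish the unit and counit of a Morita left adjoint for $ev_A$. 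More seriously, the ``general principle'' you invoke --- that a module generated under colimits by dualizable objects is itself dualizable --- is simply false: dualizability is not preserved by colimits (already in $\Vect_\CC$, every object is a colimit of dualizable ones). You correctly flag this step as the main obstacle, but the proposed gluing strategy cannot close the gap.

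The paper's argument for $(ii)$ proceeds quite differently. The key mechanism is that for semi-rigid $A$ the right-adjoint comultiplication $\mu^r$ and the transpose comultiplication $\mu^t$ are intertwined by the right-duality functor $R:A\to A'$; this propagates to the bar resolution (Corollary~\ref{reversing arrows}) and yields a general equivalence $\hh_*(A,B)\simeq \hh^*(A,{}_{LL}B)$ between Hochschild homology and twisted Hochschild cohomology (Proposition~\ref{reversing hh}). From this one reads off that any $A$-module $M$ dualizable in $\St_\CC$ is automatically $A$-dualizable, with $A$-dual the twisted $\St_\CC$-dual $M'_{RR}$ (Proposition~\ref{bimodule dual}). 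Applying this with $A$ replaced by $A^e$ --- itself semi-rigid, and canonically pivotal so that the $RR$ twist is trivial --- and $M=A$ gives the $A^e$-dualizability of $A$. The passage from objectwise duals to bimodule dualizability thus goes through a comparison of bar and cobar constructions, not through any colimit-of-dualizables argument.
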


It follows that any semi-rigid monoidal category $A\in Alg_{(1)}(\St_\CC)$, in particular the Hecke categories $\cH_G$ and $\cHtil_G$, defines
a framed 2-dimensional TFT $\cZ_A$ valued in $Alg_{(1)}(\St_\CC)$ which makes the following assignments:

\begin{enumerate}
\item[$\bullet$] To a point, $\cZ_A$ assigns $A\in Alg_{(1)}(\St_\CC)$.
\item[$\bullet$] To a circle with cylinder framing, $\cZ_A$ assigns the trace $\Tr(A)\in \St_\CC$ with its cyclic $S^1$-action. 
\item[$\bullet$] To the circle with annulus framing, $\cZ_A$ assigns the center $\Z(A) \in \St_\CC$ with its  $\cE_2$-structure.
\item[$\bullet$] To a framed surface, $\cZ_A$ assigns an object of $\Vect_\CC$.
\end{enumerate}

\subsubsection{Oriented TFTs from Calabi-Yau monoidal categories}

To extend the  TFT $\cZ_A$ from framed to oriented manifolds,
the Cobordism Hypothesis requires $A\in Alg_{(1)}(\St_\CC)$ be a fixed point for the $SO(2)$-action on 2-dualizable objects.

Part of this action can be described very explicitly: following 
\cite[Proposition 4.2.3]{jacob TFT},
a 2-dualizable object $A\in Alg_{(1)}(\St_\CC)$ admits a Serre automorphism $S_A$ characterized by
$$ev^R_A = (S_A\ot \Id_{A^{op}})\circ coev_A$$
It can be interpreted as the monodromy of $A$ around the $SO(2)$-action, or in other words, the first obstruction
to $SO(2)$-invariance of $A$. Thus a trivialization of $S_A$ as an automorphism makes $A$ a fixed point of the action of the free group (James construction) on the circle $\Omega S^2=\Omega\Sigma SO(2)$. This suffices to identify the values of $\cZ_A$ on different framed circles, for example the monoidal center and trace of $A$.

Following \cite[Section 4.2]{jacob TFT}, the stronger condition of $SO(2)$-invariance of $A$ can be encoded by the notion of 
Calabi-Yau monoidal category (as recalled in Definition~\ref{CY def} above).

\begin{cor}[Theorem \ref{perfect dualizable}]\label{TFT corollary}
A Calabi-Yau (respectively, pivotal) structure on a semi-rigid category $A\in  Alg_{(1)}(\St_\CC)$ defines an oriented
 (respectively, a weakly oriented)
2-dimensional TFT
so that $\cZ_A(S^1)$ is identified with both the
monoidal center and trace of $A$.  
\end{cor}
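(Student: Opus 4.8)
The plan is to produce the theory from the Cobordism Hypothesis and then trivialize the part of the rotation action dictated by the given structure. First I would invoke Theorem~\ref{perfect dualizable}: a semi-rigid $A$ is a $2$-dualizable object of $Alg_{(1)}(\St_\CC)$, so Lurie's Cobordism Hypothesis~\cite{jacob TFT} produces a framed $2$-dimensional TFT $\cZ_A$ with $\cZ_A(pt)=A$ --- this is exactly the framed theory already described after Theorem~\ref{perfect dualizable}, assigning $\Tr(A)$ to the cylinder-framed circle and $\Z(A)$ to the annulus-framed circle. To extend $\cZ_A$ over all oriented $2$-manifolds it is enough, by~\cite[Section~4.2]{jacob TFT}, to lift $A$ to a homotopy fixed point for the $SO(2)$-action on $2$-dualizable objects; to extend it weakly --- enough to identify the values on all framed circles --- it is enough to lift $A$ to a fixed point for the free $\cE_1$-group $\Omega\Sigma SO(2)=\Omega S^2$ on the circle.

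For the pivotal case I would check that a pivotal structure provides the latter data. The monodromy of the $SO(2)$-action on $A$ is the Serre automorphism $S_A$, an auto-equivalence of $A$ as an $A$-bimodule category, characterized by $ev^R_A=(S_A\ot\Id_{A^{op}})\circ coev_A$ and identified for a (semi-)rigid monoidal category with the double-dual auto-equivalence $(-)^{\vee\vee}$ of $A$. A pivotal structure is by definition a monoidal identification of left and right duals; composing the canonical equivalence $X\simeq {}^\vee(X^\vee)$ with this identification (applied to $X^\vee$) gives a monoidal --- hence $A$-bimodule --- trivialization $\Id_A\simeq S_A$. By~\cite[Proposition~4.2.3]{jacob TFT} such a trivialization is exactly the structure of an $\Omega S^2$-fixed point, and the resulting weakly oriented theory identifies the values on the two framed circles, $\Tr(A)\simeq\cZ_A(S^1)\simeq\Z(A)$; this recovers, from the bordism side, the center-trace identification built directly from a pivotal structure in Section~\ref{rigid categories}.

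For the Calabi-Yau case I would match Definition~\ref{CY def} with the notion of an $SO(2)$-homotopy fixed point on the $2$-dualizable object $A$ as in~\cite[Section~4.2]{jacob TFT}: the requirement that $A\ot A\to\Tr(A)\to\Vect$ be the evaluation of a self-duality of the underlying category supplies the non-degeneracy of an honest fixed point, while $S^1$-invariance of the trace $\Tr(A)\to\Vect$ supplies the coherent cyclic symmetry. This yields the oriented extension of $\cZ_A$; since a Calabi-Yau structure in particular trivializes $S_A$, the identification $\cZ_A(S^1)\simeq\Z(A)\simeq\Tr(A)$ of the previous paragraph carries over. Applying the above to $\cH_G$ and $\cHtil_G$, which are semi-rigid and pivotal/Calabi-Yau by Theorem~\ref{intro Hecke thm}(1), gives the claimed theories.

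The hard part will be the two matchings above: establishing that ``pivotal'' is equivalent to ``$S_A$ trivialized'' and ``Calabi-Yau'' to ``$SO(2)$-homotopy fixed point'', compatibly with reading off the value on $S^1$. Concretely this means unwinding $\Z(A)=\End_{A\ot A^{op}}(A)$ and $\Tr(A)=A\ot_{A\ot A^{op}}A$ against the $(\oo,2)$-bordism picture and carefully citing~\cite[Section~4.2]{jacob TFT}; granting that, no input is needed beyond Theorem~\ref{perfect dualizable} and the pivotal center-trace identification of Section~\ref{rigid categories}.
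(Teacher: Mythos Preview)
Your proposal is correct and follows essentially the same route as the paper. The corollary is stated without a separate proof in the paper; it is meant to follow from Theorem~\ref{perfect dualizable} together with the Cobordism Hypothesis framework of~\cite{jacob TFT} as summarized in the surrounding discussion, and your outline reconstructs exactly that: 2-dualizability gives the framed theory, a pivotal structure trivializes the Serre automorphism (the paper records this via Lemma~\ref{separable}(3), identifying $S_A$ with the bimodule $A'$, together with the transposes discussion giving $A'\simeq A_{LL}$), and the Calabi-Yau structure of Definition~\ref{CY def} is precisely the $SO(2)$-fixed-point datum from~\cite[Section~4.2]{jacob TFT}. Your acknowledgment that the matchings are the substantive step, deferred to~\cite{jacob TFT}, is in line with how the paper treats them.
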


\begin{example}[Fusion categories and 3-dimensional Dijkgraaf-Witten theory] 
For a finite group  $\Gamma$, the categories $\Vect^\findim_\CC(\Gamma)$ of finite-dimensional vector bundles on $\Gamma$ and $\Rep_\CC^{\findim}(\Gamma)$ of finite-dimensional complex representations of $\Gamma$ are key examples of fusion categories, monoidal categories satisfying strong finiteness conditions. In fact, it is proved in \cite{DSS} that fusion categories are 3-dualizable objects of a natural 3-category of monoidal categories, bimodule categories, intertwines and natural transformations. (They also show that pivotal structures give $\Omega S^2$-fixed structures on these theories.) The monoidal categories 
$\Vect^\findim_\CC(\Gamma)$ and $\Rep_\CC^{\findim}(\Gamma)$
are Morita equivalent and so define the same 3-dimensional TFT, an untwisted version of 3-dimensional Dijkgraaf-Witten theory \cite{DW} and a natural categorification of the finite group gauge theory discussed earlier. Since 
$\Rep_\CC^{\findim}(\Gamma)$  is symmetric monoidal, it is easy to evaluate this TFT on any manifold.
For example, to a surface $\Sigma$, one attaches the vector space of functions on the orbifold of $\Gamma$-principal bundles over $\Sigma$, while to a 3-manifold $M$, one  attaches the volume of the orbifold of $\Gamma$-principal bundles over~$M$.
\end{example}

\begin{example}[Quasicoherent 3-dimensional theories]
For a perfect stack $X$,  the symmetric monoidal category $\qc(X)$ of quasicoherent sheaves is rigid and pivotal. 
For example, for $G$ a reductive group,  its classifying stack $BG$ is perfect,
and the category $\qc(BG)$ comprises  algebraic
representations of~$G$. 

One can calculate the resulting categorified 2-dimensional TFT  explicitly by the theory of topological chiral
homology \cite[Section 5.3]{HA}, or from the direct constructions of
\cite{BFN}. In particular,  it makes the following assignments,
stated in general, then spelled out when  $X=BG$:
\begin{enumerate}
\item[$\bullet$] $Z_{X}^{qc}(pt)=\qc(X) =\Rep(G)$, algebraic representations.
\item[$\bullet$] $Z_{X}^{qc}(S^1)=\qc(\cL X)=\qc(G\adjquot G)$, adjoint-equivariant
  quasicoherent sheaves. 

\item[$\bullet$] $Z_X^{qc}(\Sigma)=\cO(X^\Sigma) = R\Gamma(Char_G(\Sigma),\cO)$,
  functions on the character variety.
  
%
\end{enumerate}

As explained in \cite{BFN appendix}, the rigid category
$\qc(BG)$ is in fact Morita equivalent to
the quasicoherent group algebra $\qc(G)$, and any intermediate Hecke category
$\qc(H\bs G/H)$, for $H\subset G$, so that all define equivalent TFTs.

Furthermore, $\qc(X)$ is naturally a Calabi-Yau monoidal category, and hence $Z_{X}^{qc}$ extends to an oriented TFT.
However, unlike the  case of fusion categories,  $\qc(X)$ is not typically 3-dualizable,
and hence
$Z_{X}^{qc}$ does not typically extend to 3-manifolds.
Nonetheless, it is natural to think of our constructions as intrinsically 3-dimensional as they correspond to supersymmetric 3-dimensional quantum gauge theories in physics.
\end{example}

\subsubsection{Character Theory}
Finally, let us turn to TFTs built out of $\D$-modules
on the complex reductive group $G$. In analogy with the above results for
quasicoherent sheaves, one  might expect to construct a single TFT starting from 
a Morita equivalent class of algebras including the ``commutative algebra"
$\D(BG)$, the ``smooth group algebra'' $\D(G)$, or any intermediate Hecke algebra
$\D(H\bs G/H)$, for $H\subset G$. Furthermore, one might expect  this TFT to attach to the circle the category 
$\D(G/G)$ of
 adjoint-equivariant $\D$-modules on $G$, and to a surface
$\Sigma$ the de Rham cohomology
$\Gamma(Char_G(\Sigma),\Omega^\bullet)$ of the character
variety.

This expectation turns out to be wrong on several counts, starting
with the failure of Morita equivalence. The symmetric monoidal category $\D(BG)$ is
indeed 2-dualizable, however it is too small to see the rich geometry
 of $G\adjquot G$,  as discussed in Section~\ref{Tannakian failure} above. It only sees  the homotopy type of $G$, and in particular assigns to the circle the category of adjoint-equivariant unipotent local systems on $G$.  At the other
extreme, the monoidal category $\D(G)\in Alg(\St_\CC)$ is not 2-dualizable, so does not define a
2-dimensional TFT, though its center is indeed 
$\D(G/G)$.

There is a sweet spot in between these two extremes given by Hecke categories.
The main results of this paper allow for the following definition and calculate the resulting TFTs on a circle.
%

\begin{definition} The  {\em equivariant character theory} $\chi_G$ 
of a reductive group $G$ is the oriented 2d TFT valued in $Alg_{(1)}(\St_\CC)$
defined by the Hecke category $\cH_G$. Likewise, the {\em monodromic character theory} $\widetilde{\chi}_G$ is the oriented 2d TFT
valued in $Alg_{(1)}(\St_\CC)$ defined by $\cHtil_G$.
\end{definition}

It follows from the Koszul duality of~\cite{S, S, BY} that there is a Langlands duality for the two-periodic versions of the above TFTs.

\begin{cor} The invariants assigned by $\widetilde{\chi}_G^{per}$ and $\chi_{\Gv}^{per}$ to
any oriented manifold are identified.
\end{cor}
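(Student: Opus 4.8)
The plan is to deduce the corollary formally from the two ingredients already in hand: (i) the Koszul duality equivalence of two-periodic Hecke categories $\cHtil_G^{per}\simeq \cH_{G^\vee}^{per}$ of \cite{BGS, S, BY}, and (ii) the fact that $\{-\}^{per}\colon \St_\CC\to\St_{\CC[u,u\inv]}$ is a symmetric monoidal functor compatible with the formation of monoidal traces. First I would check that the two-periodic localization $\cA^{per}$ of a semi-rigid Calabi-Yau monoidal category $\cA$ is again a semi-rigid (more precisely, the $\CC[u,u\inv]$-linear analogue) Calabi-Yau monoidal algebra object in $\St_{\CC[u,u\inv]}$: since $\{-\}^{per}$ is symmetric monoidal it carries algebra objects to algebra objects, carries dualizable objects to dualizable objects and preserves compact generation, and it carries the Calabi-Yau trace $\Tr(\cA)\to\Vect_\CC$ to a trace $\Tr(\cA^{per})\to\Vect_{\CC[u,u\inv]}$ still satisfying the $S^1$-invariance and self-duality conditions of Definition~\ref{CY def}. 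Hence by Theorem~\ref{perfect dualizable} and Corollary~\ref{TFT corollary} each of $\cHtil_G^{per}$ and $\cH_{G^\vee}^{per}$ defines an oriented $2$-dimensional TFT valued in $Alg_{(1)}(\St_{\CC[u,u\inv]})$ — these are exactly $\widetilde\chi_G^{per}$ and $\chi_{G^\vee}^{per}$.

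Next I would invoke the uniqueness clause of the Cobordism Hypothesis: an oriented $2$-dimensional TFT valued in a symmetric monoidal $(\oo,2)$-category is determined, up to canonical equivalence, by the $SO(2)$-fixed $2$-dualizable object it assigns to a point, together with its Calabi-Yau structure. The monoidal equivalence $\cHtil_G^{per}\simeq\cH_{G^\vee}^{per}$ of \cite{BY} is an equivalence of algebra objects in $\St_{\CC[u,u\inv]}$; I would then verify that it intertwines the two Calabi-Yau structures, i.e. is an equivalence of Calabi-Yau monoidal categories. Granting this, the Cobordism Hypothesis produces a canonical equivalence of the associated oriented TFTs $\widetilde\chi_G^{per}\simeq\chi_{G^\vee}^{per}$, and in particular the invariants they assign to any oriented manifold — a point, the circle (both framings), a surface — are canonically identified. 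Combined with part (2) of Theorem~\ref{intro Hecke thm} (applied in the two-periodic setting), the value on the circle is in either case $\chsh_G^{per}\simeq\chsh_{G^\vee}^{per}$, recovering the Corollary preceding it.

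The main obstacle is the compatibility of the Koszul/Soergel/Bezrukavnikov-Yun equivalence with the Calabi-Yau (or at least pivotal) structures. The equivalence of \cite{BY} is constructed at the level of mixed or graded categories and is manifestly monoidal, but matching the cyclic trace $\Tr(\cHtil_G^{per})\to\Vect$ with $\Tr(\cH_{G^\vee}^{per})\to\Vect$ requires knowing that Koszul duality exchanges the pivotal structure on the equivariant Hecke category (coming from the Calabi-Yau structure of Theorem~\ref{CY Hecke} on $\cH_{G^\vee}$) with that on the monodromic one. I would approach this by reducing to the generators: both pivotal structures are pinned down by their effect on the compact generators (the standard/costandard or tilting objects, indexed by the Weyl group), and Koszul duality has an explicit and well-documented action on these objects, so the identification of pivotal data becomes a finite bookkeeping over $W$. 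Upgrading from pivotal to the full $S^1$-equivariant Calabi-Yau compatibility is then the delicate step; if a clean argument is not available one can fall back to the weaker statement, asserting only the equivalence of the underlying framed theories together with the identification $\chsh_G^{per}\simeq\chsh_{G^\vee}^{per}$ on the circle, which already suffices for all the applications stated in the introduction.
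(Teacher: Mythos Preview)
Your approach is essentially the paper's, but more honest about the gaps. The paper gives no proof at all beyond the one-line preamble ``It follows from the Koszul duality of~\cite{S, S, BY}\ldots'': the intended argument is exactly the one you spell out --- the two-periodic TFTs are by definition those attached by the Cobordism Hypothesis to $\cHtil_G^{per}$ and $\cH_{G^\vee}^{per}$, and the monoidal equivalence of \cite{BY} identifies these values at a point, whence the TFTs agree.

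The subtlety you flag --- that the Cobordism Hypothesis requires not merely an equivalence of monoidal categories but an equivalence of Calabi-Yau objects (i.e.\ compatible $SO(2)$-fixed structures) --- is real, and the paper simply does not address it. Your proposed reduction to checking pivotal structures on Weyl-group-indexed generators is a reasonable line, though not one the paper pursues; the paper treats the corollary as a formal consequence and leaves this verification implicit. So your proposal is correct as a reconstruction of the intended argument, and your caveat about the Calabi-Yau compatibility is a genuine point the paper elides rather than a defect in your reading.
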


In particular, unipotent character sheaves with their TFT structures are identified by Langlands duality.
In another direction, boundary conditions for the character theory are given by module categories for the corresponding Hecke categories, for 
example, by categories $\D(K\backslash G/B)$ of Harish Chandra $(\fg,K)$-modules. The Langlands duality of boundary conditions in particular encodes Soergel's conjecture~\cite{S} on a categorical Langlands classification for real groups.
In another direction, it would be highly interesting to describe the vector
spaces $\chi_{G}(\Sigma)$ that the character theory attaches
to a surface $\Sigma$. It is natural to conjecture a relation with the
de Rham cohomology of character varieties, and hence with the
fascinating conjectures of Hausel, Letellier and Rodriguez-Villegas
\cite{HRV,HLRV,H}. Such a relation would  imply a Langlands duality
 for the cohomology of character varieties, one of our original motivations and 
 a subject of ongoing work with Sam Gunningham.

\subsection{Further discussion}\label{families}
We conclude the introduction 
with a brief discussion of twisted versions of our results and relations with supersymmetric gauge theory.

\subsubsection{Twisted $\D$-modules and families version}

The results of this paper have natural analogues in which $\D$-modules on flag varieties are replaced by
twisted $\D$-modules. The requisite arguments are formally identical, but we will only briefly summarize the picture here. We
expect a more detailed account to appear in forthcoming work with Sam Gunningham.

The monodromic Hecke category $\cHtil_G$ fits into a family of twisted monodromic Hecke categories $\cHtil_{G,\lambda}$, indexed by characters  $\lambda\in \fh^{\vee}$. 
Likewise, the dg category $\chsh_G$ of unipotent character sheaves fits into a family of dg categories
 $\chsh_{G, [\lambda]}$ of character sheaves with central character $[\lambda]\in H^\vee/W$.
 %

\begin{expectation}
Fix any $\lambda\in \fh^\vee$, with image $[\lambda]\in H^\vee/W$.

1) The twisted monodromic Hecke category $\cHtil_{G, \lambda}$ is a semi-rigid Calabi-Yau monoidal category.

2) The monoidal center and trace of
  $\cHtil_{G, \lambda}$ are both  identified with
  character sheaves $\chsh_{G, [\lambda]}$ with central character $[\lambda]\in H^\vee/W$.

\end{expectation}
%
%
%

The  Koszul duality of Hecke categories extends in an interesting
way to other characters $\lambda\in\fh^\vee$. On the monodromic side, one has the twisted 
monodromic Hecke category $\cHtil_{G, \lambda}$.
On the equivariant side, one  introduces  the centralizer  $G^\vee_\lambda \subset G^\vee$ of a semi-simple
representative of $\lambda$, a Borel subgroup $B^\vee_\lambda\subset
G^\vee_\lambda$,  and the equivariant Hecke
category $\cH_{G^\vee_\lambda} = \D(B^\vee_\lambda\bs
G^\vee_\lambda/B^\vee_\lambda)$.
The results of \cite{BGS, S, BY} give an equivalence of monoidal categories 
$$\cHtil_{G, \lambda}^{per}\simeq \cH_{G^\vee_\lambda}^{per}
$$

Now if the above expectation is realized, then  one can introduce a twisted 2-dimensional character theory $\widetilde{\chi}_{G,\lambda}$.
It will satisfy a Langlands duality
$$
\widetilde{\chi}_{G,\lambda}^{per} \simeq \chi^{per}_{G^\vee_\lambda}
$$ 
and in particular, its value on the circle will provide an equivalence
$$ \chsh_{G, [\lambda]}^{per} \simeq \chsh_{G^\vee_\lambda}^{per}.
$$
relating character sheaves on $G$
with central character $[\lambda]$ and unipotent character sheaves on 
the centralizer $G^\vee_\lambda$ in the dual group.

\begin{remark}
At first encounter, the idea (derived from Koszul duality) to consider the Hecke categories
$\cH_{G^\vee_\lambda}$ as a family might seem bizarre.
However, in \cite{reps} we establish (using the
techniques developed in \cite{conns}) that the family of 
Hecke categories $\cH_{G^\vee_\lambda}$ arises naturally by
$S^1$-equivariant localization from an evident family of affine Hecke
categories. 
One can then see its Koszul duality with $\cHtil_{G,
  \lambda}$ as a shadow of the Langlands duality for affine Hecke categories
 due to Bezrukavnikov~\cite{Roma ICM}.
\end{remark}

\subsubsection{Relation with gauge theory and geometric
 Langlands}\label{physics} 
The character theory has an expected physical interpretation due to Witten~\cite[Section
  3.3]{Witten Atiyah}, roughly coming from a topological twist
of a certain maximally supersymmetric three-dimensional gauge theory. This relation is a dimensional reduction of the 
groundbreaking interpretation of the geometric Langlands correspondence as Montonen-Olive S-duality for 
maximally supersymmetric four-dimensional gauge theory
due to Kapustin and Witten~\cite{KW}. In particular, the Langlands duality for character theories should arise as a dimensionally reduced version of the geometric Langlands correspondence.
Namely, Witten considers 4-dimensional ${\mc N=4}$
super-Yang-Mills in the ``geometric Langlands'' topological twist as studied in \cite{KW} (for any value of the twist and coupling parameter)
reduced on 4-manifolds of the form $M^3\times S^1$. The supercharge in this theory has a perturbation 
which squares to rotation along the $S^1$-factor
(this is closely related to the ``$\Omega$-background'' in the form
studied in \cite{NW}, or to the appearance of $\D$-modules from cyclic homology discovered in \cite{cyclic}). 
This supercharge still defines a differential when
restricted to $S^1$-invariant modes, and can be used to define a
$\ZZ/2$-graded 3-dimensional TFT (independent of parameters), which is the physical construction of the character theory.
The parameter $\lambda\in \fh^\vee$ has the physical interpretation as a
vacuum expectation value (vev) of a scalar field, so that
$\fh^\vee/W$ appears as the moduli of vacua of the theory (much as the
Hitchin base appears as the moduli of vacua or Seiberg-Witten base for
supersymmetric gauge theories in four dimensions), with the unipotent version studied here arising at the conformal point.



\subsection{Acknowledgements} 
This work relies heavily on the ideas and results of Jacob Lurie, whom
we wish to thank for many patient explanations. We would like to thank
Edward Witten for invaluable extended discussions, in particular on
the topic of supersymmetric gauge theories and dimensional reduction.
Our work on this project benefited greatly from our collaboration
\cite{BFN} with John Francis and many independent discussions
with him, as well as from our ongoing collaboration with Sam
Gunningham, who provided many invaluable corrections and comments. We
would also like to thank Orit Davidovich, Toly Preygel, Nick Rozenblyum and Chris Schommer-Pries for
helpful discussions, Tom Nevins for a close reading and many
corrections of the first draft, Roman Bezrukavnikov for sharing his
ideas and works in progress, and David Kazhdan for his generous
interest and comments. Finally the authors would like to thank an anonymous referee for a careful reading and many
insightful comments. DBZ is partially
supported by NSF grants DMS-0449830 (CAREER) and DMS-1103525,  DN is partially supported
by NSF grants DMS-0600909, DMS-1319287 and a Sloan Research Fellowship. This work was supported in part by the 
National Science Foundation under Grant No. PHYS-1066293 and the hospitality of the Aspen Center for Physics, and by the IAS
supported by NSF grant DMS-0635607.




\sn{Preliminaries on $\oo$-categories}\label{prelim section}
In this section, we summarize relevant technical foundations for 
homotopical algebra with $\oo$-categories.
It is beyond the scope of this
paper to provide a comprehensive discussion of
these topics.  We will instead focus on highlighting the particular
concepts, results and their references that play a role in what
follows. Our primary source is \cite{HA}; for a less condensed overview, one could also
consult~\cite{BFN} and~\cite{DGcat}.

Throughout this paper, we will work over the complex numbers $\CC$ (or any fixed
algebraically closed field of characteristic zero).
We will study (pre-triangulated) dg categories by means of the underlying $\CC$-linear stable $\infty$-category. 
See~\cite{DGcat} for an $\infty$-categorical study of dg categories and \cite{cohn} for the precise identification of the homotopy theory of pre-triangulated
dg categories over a field $k$ with $k$-linear stable $\infty$-categories.
 

\subsection{$\infty$-category basics} \label{infinity}


Roughly speaking, an $\infty$-category (or synonymously $(\oo,
1)$-category) encodes the notion of a category whose morphisms form
topological spaces and whose compositions and associativity properties
are defined up to coherent homotopies.  The theory of
$\infty$-categories has many alternative formulations (see
\cite{Bergner} for a comparison between the different versions). We
will follow the conventions of \cite{topos}, which is based on Joyal's
quasi-categories \cite{Joyal}. Namely, an $\infty$-category is a
simplicial set satisfying a weak version of the Kan condition
guaranteeing the fillability of certain horns. The underlying vertices
play the role of the set of objects while the fillable horns
correspond to sequences of composable morphisms. The book \cite{topos}
presents a detailed study of $\infty$-categories, developing analogues
of many of the common notions of category theory (an overview of the
$\infty$-categorical language, including limits and colimits, appears
in \cite[Chapter 1.2]{topos}).  An important distinction between
$\infty$-categories and the more traditional settings of model
categories or homotopy categories is that coherent homotopies are
naturally built into the definitions. Thus for example, all functors
are derived and the natural notions of limits and colimits correspond
to {homotopy} limits and colimits. We will make essential use of the
theory of adjoint functors, monads and Lurie's
extension~\cite[6.2.2]{HA} of the Barr-Beck theorem to the setting of
$\oo$-categories. 

Most of the $\oo$-categories that we will encounter are
{presentable}~\cite[5.5]{topos} in the sense that they are closed
under all small colimits (as well as limits, by \cite[Proposition
  5.5.2.4]{topos}), and generated under suitable colimits by a small
category. Examples include $\oo$-categories of spaces and of modules
over a ring.  Presentable $\infty$-categories form an
$\infty$-category ${\mc Pr}$ whose morphisms are left adjoints, or
equivalently by the adjoint functor theorem \cite[5.2]{topos},
functors that preserve all colimits \cite[5.5.3]{topos}.  

An $\oo$-category is
{stable}~\cite[1.1,1.2]{HA} if it has a zero object,
is closed under finite limits and colimits, and pushouts and
pullbacks coincide.  Stable $\oo$-categories are an analogue of the
additive setting of homological algebra: the homotopy category of a
stable $\infty$-category has the canonical structure of a triangulated
category~\cite[1.1.2]{HA}.  We will denote by $\St\subset \mc Pr$ the
full $\oo$-subcategory of stable presentable $\oo$-categories as
studied in \cite[1.4.5]{HA}. 


\sssn{Ind-categories
and compact generators}\label{sect ind-categories} 

Recall that an object $c$ of an $\oo$-category $\cC$ is compact if the
functor $\Hom_\cC(c, -)$ preserves (small) colimits (see
\cite[5.3.4]{topos} for a detailed discussion).

Given a functor $f:\cC\to \cD$ between $\oo$-categories, we will say
\begin{enumerate}
\item[$\bullet$] $f$ is {\em continuous} if it preserves all (small)
  colimits, and
\item[$\bullet$] $f$ is {\em quasi-proper} if
it takes compact objects to compact objects.
\end{enumerate}

We briefly recall the properties of ind-categories
from \cite[Section 5.3.5]{topos}. Given a small $\oo$-category $\cC$ which admits finite colimits, 
we may freely adjoin to $\cC$ all small filtered colimits. The result is a new $\oo$-category $\Ind \cC$,
called the ind-category of $\cC$,
which is presentable (and so in particular, admits all small colimits). 
By \cite[1.1.3]{HA}, if
$\cC$ is stable then so is  $\Ind \cC$.
In general, $\Ind\cC$ can be identified with the category of those presheaves on $\cC$ 
taking finite colimits to finite limits (and so
in particular,
it comes with an embedding $\cC\subset \Ind \cC$).

Let us denote by $\Idem$ the $\oo$-category of small stable $\oo$-categories that are idempotent-complete
 \cite[4.4.5]{topos} (see \cite[4.1]{BFN} for a discussion). 
For $\cC\in\Idem$, we can 
 recover $\cC$  from its ind-category  $\Ind \cC$ as the full $\oo$-category $\cC\simeq (\Ind \cC)_{cpt}$ of compact objects of
 $\Ind \cC$ (see \cite[Lemma 5.4.2.4]{topos} or 
 \cite[5.3.4.17]{topos}, and also Neeman \cite{neemanTTY}). 

This hints at the intimate relation between ind-categories and the classical notion of compact generation
(see \cite[1.4.5]{HA} and \cite[5.5.7]{topos} for more details).
A stable $\oo$-category $\cC$ is said to be compactly generated
if it admits filtered colimits and there is a small $\infty$-category $\cC^\circ$ of compact objects $c_i\in
\cC$ whose right orthogonal vanishes: if $m\in\cC$ satisfies
$\on{\Hom}_\cC(c_i,m)\simeq 0$, for all $i$, then $m\simeq 0$.


Note that an ind-category $\Ind \cC$ is automatically compactly
generated (with compact objects the objects of $\cC$).  In fact, any
compactly generated stable presentable $\oo$-category $\cC$ can be
identified with the ind-category $\Ind \cC_{cpt}$ of its subcategory
$\cC_{cpt}\subset \cC$ of compact objects. This is a version of a
theorem of Schwede and Shipley \cite{schwedeshipley} characterizing
module categories for $A_\infty$-algebras (see \cite[7.1.2]{HA} for
the $\infty$-categorical version, and \cite{Keller} for the
differential graded version).  More generally, the functor
$\Ind:\Idem\to \St$ identifies $\Idem$ with the subcategory of $\St$
consisting of compactly generated $\oo$-categories, and with morphisms
quasi-proper continuous functors; a quasi-inverse is given by $\cC\mapsto
\cC_{cpt}$.


\subsubsection{Opposite categories and restricted opposites}\label{restricted op}
We denote by $\cC^\catop$ the opposite category of an $\oo$-category $\cC$
(reserving the superscript ${op}$ for the opposite monoidal structure on algebras;
see Section~\ref{monoidal infinity} below). 
We will typically consider opposite categories for small categories only. 
For $\cC=\Ind \cC_{cpt}$ a compactly generated $\oo$-category, we will work with the modified notion of the restricted opposite category
defined by 
$$\cC'=\Ind(\cC_{cpt}^\catop).
$$ The terminology is motivated by that of the restricted dual of vector spaces
equipped with an extra structure such as a grading. 

\begin{example}
For the $\oo$-category $\Vect_\CC$ of $\CC$-vector spaces, $\Vect_{\CC, cpt}$ consists
of perfect complexes of $\CC$-vector spaces. Duality gives an identification 
$\Vect_{\CC, cpt}\simeq \Vect_{\CC, cpt}^\catop$,
which extends by continuity to an equivalence $\Vect_{\CC}\simeq \Vect_{\CC}'$. 
Note that the plain opposite category $\Vect_{\CC}^\catop$ is the $\oo$-category 
of pro-finite dimensional vector spaces, which is quite different from $\Vect_\CC'$.
\end{example}

\subsubsection{Adjoint and quasi-proper functors}
We record here a couple of other useful statements.

\begin{lemma}\label{quasi-proper functors} Let $F:{\cC}\longleftrightarrow{\cD}: G$ denote an adjoint
pair of  functors of $\oo$-categories. If $G$ is continuous, then $F$ is quasi-proper.
Conversely, if ${\cC}$ is compactly generated and $F$ is quasi-proper, then $G$ is continuous.
\end{lemma}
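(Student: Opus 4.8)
\textbf{Proof plan for Lemma~\ref{quasi-proper functors}.}

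The plan is to deduce both implications directly from the defining universal property of an adjunction, namely the natural equivalence $\Hom_\cD(Fc, d) \simeq \Hom_\cC(c, Gd)$, together with the characterization of compact objects via preservation of colimits. First I would prove the first implication: assume $G$ is continuous, and let $c \in \cC$ be compact; I want to show $Fc \in \cD$ is compact, i.e.\ that $\Hom_\cD(Fc, -)$ preserves small colimits. For any small diagram $d_\bullet$ in $\cD$ with colimit $d = \colim d_\bullet$, I compute $\Hom_\cD(Fc, \colim d_\bullet) \simeq \Hom_\cC(c, G\colim d_\bullet) \simeq \Hom_\cC(c, \colim G d_\bullet) \simeq \colim \Hom_\cC(c, G d_\bullet) \simeq \colim \Hom_\cD(Fc, d_\bullet)$, where the first and last steps use the adjunction (which is natural in the second variable), the second step uses continuity of $G$, and the third step uses compactness of $c$. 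Hence $Fc$ is compact, so $F$ is quasi-proper.

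Next I would prove the converse: assume $\cC$ is compactly generated and $F$ is quasi-proper, and show $G$ is continuous. Since $\cC$ is compactly generated, $\cC \simeq \Ind(\cC_{cpt})$, and a functor out of an $\Ind$-category preserves colimits as soon as its precomposition with the inclusion $\cC_{cpt} \hookrightarrow \cC$ detects colimits in the appropriate sense; more concretely, it suffices to check that for every compact $c \in \cC_{cpt}$, the functor $\Hom_\cC(c, G(-))$ preserves small colimits in $\cD$, because the collection of such $c$ jointly detects equivalences (their right orthogonal vanishes) and the colimit comparison map becomes an equivalence once it is so after applying every $\Hom_\cC(c,-)$. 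For any small diagram $d_\bullet$ in $\cD$, I compute $\Hom_\cC(c, G\colim d_\bullet) \simeq \Hom_\cD(Fc, \colim d_\bullet) \simeq \colim \Hom_\cD(Fc, d_\bullet) \simeq \colim \Hom_\cC(c, G d_\bullet)$, where the middle step uses that $Fc$ is compact (by quasi-properness of $F$) and the outer steps use the adjunction. Comparing with $\Hom_\cC(c, \colim G d_\bullet) \simeq \colim \Hom_\cC(c, Gd_\bullet)$ (again by compactness of $c$, applied in $\cC$ this time) shows the natural map $\colim G d_\bullet \to G\colim d_\bullet$ is sent to an equivalence by every $\Hom_\cC(c,-)$ with $c$ compact, hence is an equivalence. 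So $G$ is continuous.

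The main obstacle, such as it is, lies in the converse direction and is entirely a matter of justifying the reduction ``$G$ continuous $\iff$ $\Hom_\cC(c,G(-))$ continuous for all compact $c$''. This requires knowing that in a compactly generated (stable presentable) category, a morphism is an equivalence iff it induces equivalences on all mapping spaces out of compact objects, and that colimits in $\cC$ are likewise detected by these functors. Both are standard facts about compactly generated $\infty$-categories (see \cite[5.5.7]{topos}, \cite[1.4.5]{HA}); the stable setting is not essential but makes the orthogonality criterion especially clean. Everything else is a formal manipulation of adjunction units and counits and the colimit-preservation property of compact objects, so no genuine difficulty is expected.
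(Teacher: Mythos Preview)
Your proof is correct and is exactly the standard argument. The paper itself gives no proof at all, merely citing \cite[Proposition 1.2.4]{BunG compact}; your direct argument via the adjunction isomorphism and the detection of equivalences by compact objects is precisely what underlies that reference, so there is nothing to compare.
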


\begin{proof}
See \cite[Proposition 1.2.4]{BunG compact}.
\end{proof}

We will only need the following in the stable setting though it holds more generally.

\begin{lemma}\label{Ind adjoint} Let $f:\cC\longleftrightarrow \cD: g$ denote an adjoint
pair of   exact functors between small stable $\oo$-categories. 
Then the induced functors $\Ind f:\Ind \cC \longleftrightarrow \Ind \cD: \Ind g$ are also
adjoint.
\end{lemma}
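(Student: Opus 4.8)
The plan is to produce the unit and counit of the putative adjunction $\Ind f \dashv \Ind g$ by extending those of $f \dashv g$ along the colimit completion, and then to check the triangle identities. First I would recall the universal property of $\Ind$: for a small $\oo$-category $\cC$ admitting finite colimits, $\Ind\cC$ is the free cocompletion of $\cC$ under filtered colimits, and any functor $\cC \to \cE$ into an $\oo$-category with filtered colimits extends essentially uniquely to a continuous (filtered-colimit-preserving) functor $\Ind\cC \to \cE$; moreover $\Ind$ is functorial in exact functors, so $\Ind f$ and $\Ind g$ are the continuous extensions of $\cC \xra{f} \cD \subset \Ind\cD$ and $\cD \xra{g} \cC \subset \Ind\cC$. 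The cleanest route is then to invoke the adjoint functor theorem: since $\cC$ is small and stable, $\Ind\cC$ is presentable, likewise $\Ind\cD$; the functor $\Ind f$ is continuous by construction, so by the presentable adjoint functor theorem (\cite[5.5.2]{topos}) it admits a right adjoint $(\Ind f)^R$. It remains to identify $(\Ind f)^R$ with $\Ind g$.

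To make that identification, I would compare the two candidate right adjoints via their effect on mapping spaces. For compact objects $c \in \cC \subset \Ind\cC$ and arbitrary $d \in \Ind\cD$, write $d = \colim_\alpha d_\alpha$ as a filtered colimit of objects $d_\alpha \in \cD$. Then
$$
\Map_{\Ind\cD}(\Ind f(c), d) \simeq \colim_\alpha \Map_{\cD}(f(c), d_\alpha) \simeq \colim_\alpha \Map_{\cC}(c, g(d_\alpha)) \simeq \Map_{\Ind\cC}(c, \Ind g(d)),
$$
using that $f(c)$ is compact in $\Ind\cD$ (as $f$ is exact hence $\Ind f$ quasi-proper, or simply because $f(c) \in \cD$), the adjunction $f \dashv g$ on the small categories, and continuity of $\Ind g$ together with compactness of $c$. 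Since compact objects generate $\Ind\cC$ under filtered colimits and both $\Ind g$ and $(\Ind f)^R$ send filtered colimits appropriately — $\Ind g$ is continuous by construction, and $(\Ind f)^R$ preserves filtered colimits because $\Ind f$ is quasi-proper (it carries the compact generators of $\Ind\cC$ into $\cD$, the compacts of $\Ind\cD$), so Lemma \ref{quasi-proper functors} applies — this natural equivalence on compact sources extends by continuity in the first variable to all of $\Ind\cC$, exhibiting $\Ind g \simeq (\Ind f)^R$.

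The one point requiring genuine care — the main obstacle, such as it is — is the continuity of the right adjoint $(\Ind f)^R$, i.e.\ that it preserves filtered colimits, since this is what lets us reduce the comparison to compact sources and is also part of the asserted conclusion (that $\Ind g$, which is continuous, really is the right adjoint). This is exactly where quasi-properness of $\Ind f$ enters: by Lemma \ref{quasi-proper functors}, if $\Ind\cC$ is compactly generated and $\Ind f$ is quasi-proper, then its right adjoint is continuous; and $\Ind f$ is quasi-proper since it sends the compact generators $\cC_{cpt} = \cC$ of $\Ind\cC$ into $\cD \subset \Ind\cD$, which consists of compact objects. Everything else is the formal manipulation above. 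Alternatively, and perhaps more transparently, one can skip the adjoint functor theorem entirely: build the unit $\id_{\Ind\cC} \Ra \Ind g \circ \Ind f$ and counit $\Ind f \circ \Ind g \Ra \id_{\Ind\cD}$ as the continuous extensions of the unit and counit of $f \dashv g$ (using that $\Ind g \circ \Ind f \simeq \Ind(g\circ f)$ and similarly for the composite in the other order, by functoriality of $\Ind$), and verify the triangle identities by continuity, since they hold on the subcategories $\cC, \cD$ of compact objects and all functors involved preserve filtered colimits.
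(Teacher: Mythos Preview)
Your proof is correct. The paper's own argument is a one-line appeal to the universal property of $\Ind$: the equivalence of $\infty$-categories $\Fun^{ex}(\cC,\cD)\simeq\Fun^L(\Ind\cC,\cD)$ says that $\Ind$ is fully faithful at the 2-categorical level (a 2-functor from small stable categories with exact functors to presentable stable categories with continuous functors), and such functors preserve adjunctions. Your ``alternative route'' is exactly this argument unpacked---the unit, counit, and triangle identities are data in the functor $\infty$-categories that $\Ind$ identifies, so they transport directly. Your primary route via the adjoint functor theorem and a mapping-space comparison on compact objects is a more hands-on alternative; it has the mild virtue of making explicit where quasi-properness and Lemma~\ref{quasi-proper functors} enter, but is a longer path to the same destination.
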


\begin{proof}
The assertion follows immediately from the universal property \cite[Proposition 5.3.5.10]{topos} of the $\Ind$ construction:
for $\cC$ a small stable $\oo$-category and $\cD\in \St$ a presentable stable $\oo$-category, there is an equivalence of $\oo$-categories
$$
\xymatrix{
\Ind: \Fun^{ex}(\cC,\cD)\ar[r]^-\sim & \Fun^L(\Ind \cC,\cD)
}$$
between the categories of continuous functors (equivalently, left adjoints)
 from $\Ind\cC$ to $\cD$ and functors
 from $\cC$ to $\cD$ that preserve finite colimits.
\end{proof}

\subsubsection{Reversing diagrams}\label{reversing diagrams}
The adjoint functor theorem has a very useful corollary that enables us to calculate suitable colimits of $\oo$-categories
by turning them into (much simpler) limits.
Recall that given a small colimit-preserving (respectively, small limit-preserving  accessible) functor 
$F:\cC\to \cD$ between stable presentable $\oo$-categories,
the adjoint functor theorem provides a right (respectively, left) adjoint. 
In other words, the opposite of the $\oo$-category $\St$ of stable presentable $\oo$-categories
with morphisms colimit-preserving functors is the $\oo$-category $\St^R$ of 
 stable presentable $\oo$-categories
with morphisms accessible limit-preserving functors (see (\cite[Corollary 5.5.3.4]{topos} in the unstable setting). 
Moreover, small limits
of stable presentable $\oo$-categories are independent of context: the
forgetful functors 
$$
\xymatrix{
\St\ar[r] & \Cat_\oo &  \St^R \ar[l] 
}
$$
 to the $\oo$-category $\Cat_\oo$ of all
$\oo$-categories preserve limits. Thus the limit of a
diagram in $\St$ consisting of limit-preserving functors can
be calculated as the limit of the same diagram regarded in $\St^R$. In turn, the limit of a
diagram in $\St^R$ can be calculated
 as the
colimit of the corresponding diagram in $\St$ formed by the left adjoints.

\subsection{Monoidal $\infty$-categories and duals}\label{monoidal infinity}
A monoidal $\infty$-category as defined in \cite[4.1.1]{HA} is an
$\oo$-category equipped with a homotopy coherent associative unital
product. Its homotopy category is an ordinary monoidal category.  An
algebra object $A$ in a monoidal $\oo$-category as defined
in~\cite[4.1.2]{HA} is an object equipped with a homotopy coherent
multiplication.  Left and right module objects over an algebra object
are defined similarly \cite[4.2.2]{HA}, with right modules identified
with left modules over the opposite algebra object $A^{op}$  in the opposite 
monoidal $\infty$-category. There
is a relative tensor product $\cdot \ot_A \cdot$ of left and right
modules given by the two-sided bar construction \cite[4.3.4]{HA}.
Monoidal $\infty$-categories, algebra objects in a monoidal
$\oo$-category, and module objects over an algebra object themselves
form $\infty$-categories, some of whose properties (in particular,
behavior of limits and colimits) are worked out in \cite[4.2]{HA}.


The definition of a symmetric monoidal $\infty$-category is given in
\cite[2.0.0.7]{HA} modeled on the Segal machine for infinite loop
spaces.  Its homotopy category is an ordinary symmetric monoidal
category.  There is the notion of commutative algebra object $A$ in a
symmetric monoidal $\oo$-category  such that $A$-modules form a
symmetric monoidal $\oo$-category with respect to relative tensor
product~ \cite[4.4.2]{HA}.  Given two associative algebra objects $A, B$, their monoidal product $A\ot B$ carries a
natural associative algebra structure.  Furthermore, any associative
algebra object $A$ is a left (as well as a right) module object over
the associative algebra $A\ot A^{op}$ via left and right
multiplication.

The $\infty$-category ${\mc Pr}$ of presentable $\infty$-categories
and continuous functors carries a symmetric monoidal structure
introduced in \cite[6.3.1]{HA}. The tensor product $\cC\ot\cD$ of
presentable $\cC,\cD$ is a recipient of a universal functor from the
Cartesian product $\cC\times \cD$ which is ``bilinear" (commutes with
colimits in each variable separately).  In fact, the tensor product
lifts to a symmetric monoidal structure in which the unit object is
the $\infty$-category of spaces~\cite[6.3.1]{HA}.  Furthermore, the
symmetric monoidal structure is closed in the sense that ${\mc Pr}$
admits an internal hom functor compatible with the tensor structure~
\cite[Remark 5.5.3.9]{topos}, \cite[Remark 6.3.1.17]{HA}.  The
internal hom assigns to presentable $\infty$-categories $\cC$ and
$\cD$ the $\infty$-category of colimit-preserving functors
$\Fun(\cC,\cD)$ which is presentable by \cite[Proposition
  5.5.3.8]{topos}.  
  
  The symmetric monoidal structure on the
$\infty$-category $\mc Pr$ of presentable $\infty$-categories
restricts to one on the full $\infty$-subcategory $\St \subset \mc Pr$ of stable
presentable $\infty$-categories~\cite[6.3.2]{HA}, \cite[6.3.2]{HA}.
The unit is the stable category, modules over the sphere spectrum.
By \cite[Proposition 4.4]{BFN}, the functor $\Ind:\Idem\to \St$ is in
a natural way a {symmetric monoidal} functor
$$\Ind(\cC_1\ot\cC_2)\simeq \Ind(\cC_1)\ot\Ind(\cC_2)$$ 
for the
natural symmetric monoidal structures on both sides.

A basic example of a symmetric monoidal stable $\oo$-category is $\Vect_\CC$, which underlies the dg category of chain complexes of $\CC$-vector spaces.
The rest of this paper takes place in the setting of the symmetric monoidal $\oo$-category 
$$\St_\CC=\Mod_{\Vect_\CC}(\St)$$
of $\CC$-linear stable presentable $\oo$-categories, defined as modules for $\Vect_\CC$ in stable categories.
The category $\St_\CC$ provides a model for the theory of differential graded categories over $\CC$ (see \cite{cohn} for the identification
of the corresponding homotopy theories). As the category of modules over a commutative algebra object (namely $\Vect_\CC$), $\St_\CC$ inherits a symmetric monoidal structure itself with unit $\Vect_\CC$.
All of the foundations and formal results for stable $\oo$-categories naturally
translate to the $\CC$-linear setting by base change to the commutative algebra $\Vect_\CC$.

\sssn{Dualizability}\label{duals}

Let $A$ denote a monoidal $\oo$-category with monoidal product $\star$ and monoidal unit $1_A$. A key property for objects
of $A$ is dualizability (see \cite[Section 4.2.5]{HA}).

\begin{definition} An object $a\in A$ of a monoidal $\oo$-category
 is {\em left dualizable} (respectively, {\em right dualizable}) if there
  is an object ${}^\vee a$ (respectively, $a^\vee$) together with morphisms
$$\xymatrix{
coev: 1_A\ar[r] &  {}^\vee a \star a
& 
ev: a \star {}^\vee a
  \ar[r] &  1_A
  }
  $$ so that the obvious compositions
$$
\xymatrix{
a\ar[r] & a\star {}^\vee a \star a \ar[r] & a &
      {}^\vee a\ar[r] & {}^\vee a \star a \star {}^\vee a \ar[r] &
      {}^\vee a}
      $$ are equivalent to the corresponding identity maps
      (respectively,  
      $$\xymatrix{
      coev:1_A\ar[r] &  a\star a^\vee  &
      ev: a^\vee \star a\ar[r] & 1_A
      }$$ satisfying analogous composition
      identities).
\end{definition}

Note that the left dual of $a^\vee$ is again $a$,
and the right dual of ${}^\vee a$ is again $a$, i.e., left and right dualities are inverses of each other.
For a {symmetric} monoidal $\oo$-category $A$, we will take advantage of the
natural identification between the notions of left and right duals and
speak simply of the dual $a'$.

One can reinterpret the notion of dualizable object in terms of adjoint functors: 
 $a\in A$ 
 is {left dualizable} (respectively, {right dualizable}) if the functor $ \star a$ admits the left adjoint
 $\star {}^\vee a  $, or equivalently
  the functor $ a \star$ admits the right adjoint
 ${}^\vee a \star  $
(respectively, if the functor $ \star a $ admits the right adjoint
 $\star a^\vee  $, or equivalently
  the functor $a\star $ admits the left adjoint
 $a^\vee\star  $).
 Let us record this in the following form for easy reference.

\begin{lemma}\label{internal}  
For a right (respectively, left) dualizable object $a$ in a monoidal
$\oo$-category $A$ we have functorial identifications
$$
\xymatrix{
\Hom_A(a^\vee\star b, c)\simeq \Hom_A( b,a\star c)
&
\Hom_A( b\star a, c)\simeq \Hom_A( b, c \star a^\vee)
}
$$ 
which are monoidal in $a$.

Likewise, for an $A$-module category $M$ and objects $m,n\in M$, we have a functorial identification
$$
\xymatrix{
\Hom_M(a^\vee\star  m, n)\simeq \Hom_M( m, a \star  n)
}
$$
where we write $\star$ for the $A$-action on $M$.

\end{lemma}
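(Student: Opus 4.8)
The plan is to derive all three identifications from the reinterpretation of dualizability as adjointness recorded just above, together with the standard equivalence of mapping spaces $\Hom(Fx,y)\simeq\Hom(x,Gy)$ attached to an adjoint pair $F\dashv G$. The mechanism is that left multiplication, right multiplication, and the module action each determine a monoidal functor into a monoidal $\oo$-category of endofunctors under composition: $L\colon A\to(\Fun(A,A),\circ)$, $a\mapsto a\star(-)$; $R\colon A^{op}\to(\Fun(A,A),\circ)$, $a\mapsto(-)\star a$, where $A^{op}$ denotes $A$ with the reversed monoidal product; and $\rho\colon A\to(\Fun(M,M),\circ)$, $a\mapsto a\star(-)$, using that an $A$-module structure on $M$ is precisely such a monoidal functor. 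Now a monoidal functor preserves left and right dualizable objects together with their dual data \cite{HA}, and in $(\Fun(\cC,\cC),\circ)$ a right (respectively, left) dualizable object is exactly an endofunctor admitting a left (respectively, right) adjoint, the adjoint being the dual --- indeed, such an adjunction is literally the same data as the $coev$/$ev$ duality data.

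First I would treat the right dualizable case, with $a^\vee$ the right dual of $a$ and structure maps $coev\colon 1_A\to a\star a^\vee$, $ev\colon a^\vee\star a\to 1_A$. Applying the monoidal functor $L$ carries $a$ to the right dualizable object $a\star(-)$ with right dual $a^\vee\star(-)$, that is, to the adjunction $a^\vee\star(-)\dashv a\star(-)$ with unit and counit obtained from $coev$ and $ev$ by whiskering; on mapping spaces this is the asserted $\Hom_A(a^\vee\star b,c)\simeq\Hom_A(b,a\star c)$. In $A^{op}$ the object $a$ is left dualizable with left dual $a^\vee$, so applying $R$ carries it to the left dualizable object $(-)\star a$ with left dual $(-)\star a^\vee$, that is, to the adjunction $(-)\star a\dashv(-)\star a^\vee$, giving on mapping spaces $\Hom_A(b\star a,c)\simeq\Hom_A(b,c\star a^\vee)$. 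The left dualizable case of the statement is the same argument with $a^\vee$ replaced by ${}^\vee a$ and the roles of $L$ and $R$ interchanged, equivalently it is the right dualizable case run inside $A^{op}$. For the module statement I would instead apply $\rho$: just as with $L$, a right dualizable $a$ goes to the adjunction $a^\vee\star(-)\dashv a\star(-)$ of endofunctors of $M$, whence $\Hom_M(a^\vee\star m,n)\simeq\Hom_M(m,a\star n)$.

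Monoidality in $a$ then follows formally: the right dualizable objects span a monoidal subcategory of $A$ (closed under $\star$, containing $1_A$) on which $a\mapsto a^\vee$ is a monoidal functor, with $1_A^\vee\simeq 1_A$ and $(a_1\star a_2)^\vee\simeq a_2^\vee\star a_1^\vee$ compatibly with the $coev$ and $ev$ data; since $L$, $R$ and $\rho$ are monoidal, the induced adjunctions --- and hence the mapping-space equivalences above --- assemble compatibly with composition in $a$. I do not anticipate a genuine obstacle: the argument is entirely an application of the two standard facts cited above. The one point demanding care is the bookkeeping of the op/reversal conventions --- which of $L$, $R$, $\rho$ to feed $a$ into, and whether $a$ becomes a left or a right dual after passage to $A^{op}$ --- so that the two sides of each adjunction, hence of each Hom-equivalence, land where the statement puts them. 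A more hands-on alternative would build the units and counits directly out of $coev$ and $ev$ and verify the triangle identities, invoking that an $\oo$-categorical adjunction is detected already in the homotopy $2$-category; this reaches the same conclusion with marginally more coherence bookkeeping.
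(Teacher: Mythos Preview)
Your proposal is correct and matches the paper's approach exactly: the paper states no separate proof for this lemma, presenting it explicitly as a recording ``for easy reference'' of the adjoint-functor reinterpretation of dualizability given in the paragraph immediately preceding it. Your write-up simply spells out that reinterpretation in more detail than the paper bothers to --- packaging left multiplication, right multiplication, and the module action as monoidal functors into endofunctor categories where duals become adjoints --- which is precisely the mechanism the paper has in mind.
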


\sssn{Dualizability of stable $\infty$-categories}
We work with stable presentable $\CC$-linear categories $\St_\CC$ (or dg categories), though everything works in the general setting $\St$ over the sphere.
As discussed above, the $\oo$-category $\St_\CC$ is naturally symmetric monoidal with unit the $\oo$-category $\Vect_\CC$ of chain complexes.

\begin{prop}\label{jacob duality}
Suppose $\cC\in \St$ is compactly generated so that $\cC=\Ind
\cC_{cpt}$. Then $\cC$ is dualizable in $\St$ with dual the restricted
opposite $\cC'=\Ind(\cC_{cpt}^\catop)$.
\end{prop}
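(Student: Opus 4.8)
The plan is to produce the duality data — unit and counit — directly from the Ind-construction and its universal property, and then verify the triangle (zigzag) identities. First I would recall the relevant universal property from Section~\ref{sect ind-categories}: for $\cD$ a small stable $\oo$-category and $\cE\in\St_\CC$, the $\Ind$ functor induces an equivalence $\Fun^{ex}(\cD,\cE)\simeq\Fun^L(\Ind\cD,\cE)$, and moreover $\Ind$ is symmetric monoidal (Proposition~4.4 of~\cite{BFN}), so $\Ind(\cD_1\ot\cD_2)\simeq\Ind\cD_1\ot\Ind\cD_2$. In particular, a $\CC$-linear colimit-preserving functor out of $\cC\ot\cC'=\Ind(\cC_{cpt})\ot\Ind(\cC_{cpt}^\catop)\simeq\Ind(\cC_{cpt}\ot\cC_{cpt}^\catop)$ is the same data as a $\CC$-linear exact functor $\cC_{cpt}\ot\cC_{cpt}^\catop\to\Vect_\CC$, i.e.\ an exact bifunctor contravariant in the second variable; and a colimit-preserving functor $\Vect_\CC\to\cC'\ot\cC$ is determined by a single object of $\cC'\ot\cC\simeq\Ind(\cC_{cpt}^\catop\ot\cC_{cpt})$.

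The evaluation $ev:\cC\ot\cC'\to\Vect_\CC$ is then the $\CC$-linear cocontinuous extension of the mapping-complex pairing $(c,d)\mapsto\Hom_{\cC}(d,c)$ on compact objects, which is exact in $c$ and exact-contravariant in $d$ since $d$ is compact; this is well-defined by the universal property. For the coevaluation $coev:\Vect_\CC\to\cC'\ot\cC$, note $\cC'\ot\cC$ is compactly generated with compact objects a completion of $\cC_{cpt}^\catop\ot\cC_{cpt}$, and the class of objects $x^\catop\ot x$ for $x\in\cC_{cpt}$ assembles (using a small set of compact generators and the standard ``diagonal'' construction familiar from the case $\cC=\Vect_\CC$) into an object $\mathbf{1}_{coev}$; set $coev(\Vect_\CC)$ to be generated by it, extended cocontinuously and $\CC$-linearly. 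An efficient alternative is to observe that the bifunctor pairing already exhibits $\cC'$ as the internal hom $\intHom(\cC,\Vect_\CC)$ via Lemma~\ref{internal}-type adjunctions, and that $\cC$ is \emph{compactly generated} forces this internal hom to be a genuine dual rather than merely a weak one; I would spell this out by the identification $\Fun^L(\cC,\cE)\simeq\cC'\ot\cE$, natural in $\cE$, which is exactly the defining property of dualizability.

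The zigzag identities are the real content. One composite sends $\cC\xrightarrow{coev\ot\Id}\cC\ot\cC'\ot\cC\xrightarrow{\Id\ot ev}\cC$ and must be the identity; since all functors in sight are $\CC$-linear and cocontinuous, it suffices to check this on a set of compact generators, where it unwinds to the statement that $\bigoplus$ over generators of $\Hom_\cC(x,c)\ot x$ recovers $c$ — i.e.\ the fact that every object of a compactly generated stable category is canonically a (filtered) colimit of compacts mapping to it, together with the ``co-Yoneda'' resolution. The symmetric composite $\cC'\xrightarrow{\Id\ot coev}\cC'\ot\cC\ot\cC'\xrightarrow{ev\ot\Id}\cC'$ is handled the same way, using that $\cC'=\Ind(\cC_{cpt}^\catop)$ is itself compactly generated and the pairing is symmetric up to the evident flip. \textbf{The main obstacle} I anticipate is making the coevaluation object and the co-Yoneda bookkeeping precise at the $\oo$-categorical level — one cannot literally index a direct sum over ``all compact objects,'' so one must either fix a small set of compact generators (which exists since $\cC\in\St_\CC$) and track the resulting bimodule-type object carefully, or, more cleanly, deduce everything formally from the chain of natural equivalences $\Fun^L(\cC\ot\cD,\cE)\simeq\Fun^L(\cD,\Fun^L(\cC,\cE))\simeq\Fun^L(\cD,\cC'\ot\cE)$ and the Yoneda lemma, thereby avoiding explicit models altogether. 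This is, in any case, essentially \cite[Proposition 4.2.5.X]{HA} transported along $\Ind$, so I would lean on that citation for the coherence and present the Ind-theoretic description as the concrete identification of the dual.
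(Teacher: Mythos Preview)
The paper does not actually prove this proposition at all: its entire proof is the single line ``See \cite[Proposition 2.3.1]{DGcat}.'' Your sketch is therefore not competing with a proof in the paper but rather supplying the content of the cited reference, and in outline it matches what one finds there: the evaluation is the continuous extension of the $\Hom$-pairing on compacts, the coevaluation is the ``identity bimodule,'' and the triangle identities reduce to (co-)Yoneda on compacts. One place to tighten: your description of the coevaluation object as something ``assembled'' from the classes $x^\catop\ot x$ over compact generators is imprecise, and you rightly flag it as the main obstacle. The clean description is that $\cC'\ot\cC\simeq\Ind(\cC_{cpt}^\catop\ot\cC_{cpt})$ embeds fully faithfully in biexact functors $\cC_{cpt}\times\cC_{cpt}^\catop\to\Vect_\CC$, and the coevaluation object is precisely the $\Hom$ bifunctor $(x,y)\mapsto\Hom_\cC(y,x)$ regarded as such an ind-object; with this in hand the zigzag identities are literally the Yoneda lemma, and no choice of generating set or ``direct sum over compacts'' is needed. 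Your alternative formal route via the natural equivalence $\Fun^L(\cC,\cE)\simeq\cC'\ot\cE$ is also fine, but note that establishing this equivalence for \emph{all} $\cE\in\St_\CC$ is equivalent to the dualizability statement itself, so one still has to produce the unit and counit and check the identities at some point --- it is not a shortcut, just a repackaging.
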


\begin{proof}
See \cite[Proposition 2.3.1]{DGcat}.
\end{proof}

\subsubsection{Dualizability of modules}
We will also require a slightly generalized notion of dualizablity,
which applies in the (non-monoidal) setting of modules over an
associative algebra. Let $A\in \Alg(\cS)$ denote an associative algebra in a
symmetric monoidal $\oo$-category $\cS$ with unit $1_\cS$, and let $\cC=A\otimes A^{op}\on{-mod}$ denote the $\oo$-category of $A$-bimodules in $\cS$.

\begin{definition} \label{duality for modules}
An $A$-module $ M$ is said to be $A$-dualizable 
(respectively, $\cS$-dualizable) if there is an
  $A^{op}$-module ${}^\vee M$ (respectively $ M^\vee$) together with morphisms
$$
\xymatrix{
ev^L:  M \ot_\cS{}^{\vee} M  \ar[r] & A \mbox{ in }\cC &
  coev^L:1_\cS\ar[r] &  {}^{\vee}M\ot_A M\mbox{ in }\cS
  }
  $$
  (respectively,
  $$
  \xymatrix{
  ev^R: M^\vee\ot_A  M\ar[r] & 1_\cS\mbox{ in }\cS & 
  coev^R:A \ar[r] &  M\ot_\cS  M^\vee\mbox{ in }\cC)
  }$$ satisfying the usual
  identities.

\end{definition}

\begin{remark}\label{infty,2}
The notion of dualizability for $A$-modules is perhaps more naturally seen as usual dualizability for bimodules for algebra objects $A$ and $B$ (in the case $B=1_\cS$), under the convolution product on bimodules. 

Recall (see \cite{jacob TFT}) that by considering monoidal
categories as 2-categories with a single object, we can understand
duality for objects in a monoidal category as a special case of the
notion of adjoints in a $2$-category. The above notion of duality for
$A$-modules (or bimodules) has a similar interpretation. Namely, we consider an
$A$-module $ M\in \cC$ as a bimodule for $A$ and the unit $1_S$, and
thus as a morphism $A\to 1_S$ in the Morita $(\oo,2)$-category
$Alg_{(1)}(\cS)$ of algebra objects in $\cS$. In this interpretation, $ M$ is $A$- (respectively,
$\cS$-) dualizable precisely when $ M$ admits a left (respectively,
right) adjoint. 
\end{remark}

We record here the following elementary compatibility of our previous definitions.

\begin{lemma}\label{duals compatible}
An $A$-module $M\in \Mod_A(\cS)$ is $\cS$-dualizable if and only if the underlying object $M\in \cS$ is dualizable.
In this case, the $\cS$-dual $M^\vee\in \Mod_{A^{op}}(\cS)$ has underlying object the dual $M'\in \cS$, compatibly with evaluation and coevaluation maps.
\end{lemma}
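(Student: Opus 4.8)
The plan is to prove both directions by unwinding definitions and using the two-sided bar construction for relative tensor products. Recall that for $M \in \Mod_A(\cS)$, being $\cS$-dualizable means (Definition~\ref{duality for modules}, right-dualizable case) providing an $A^{op}$-module $M^\vee$ together with $ev^R : M^\vee \ot_A M \to 1_\cS$ in $\cS$ and $coev^R : A \to M \ot_\cS M^\vee$ in the bimodule category $\cC = A \ot A^{op}\on{-mod}$, satisfying the triangle identities. Being dualizable in the plain symmetric monoidal category $\cS$ means the underlying object $M$ admits a dual $M'$ with structure maps $\widetilde{ev} : M' \ot_\cS M \to 1_\cS$ and $\widetilde{coev} : 1_\cS \to M \ot_\cS M'$ in $\cS$.

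First I would prove the forward direction: if $M$ is $\cS$-dualizable as a module, then the underlying object is dualizable. Given $ev^R$ and $coev^R$ as above, I would forget the module structures and observe that $coev^R$ composed with the unit map $1_\cS \to A$ (from the algebra structure of $A$) yields a map $1_\cS \to M \ot_\cS M^\vee$, while $ev^R$ precomposed with the natural surjection $M^\vee \ot_\cS M \to M^\vee \ot_A M$ (the defining coequalizer/bar map of the relative tensor product) yields $M^\vee \ot_\cS M \to 1_\cS$. One then checks that the triangle identities for the module-level duality, after applying these forgetful and unit maps, degenerate precisely to the triangle identities making $M^\vee$ (as an underlying object) a dual of $M$ in $\cS$; this is the routine diagram chase that I would not spell out. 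The conclusion is that $M' \simeq M^\vee$ as objects of $\cS$, compatibly with the structure maps.

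For the converse — the substantive direction — suppose the underlying object $M$ is dualizable in $\cS$ with dual $M'$, structure maps $\widetilde{ev}, \widetilde{coev}$. I would equip $M'$ with an $A^{op}$-module structure by transport: the right $A$-action on $M$, i.e. $M \star a$ for $a \in A$, corresponds under the adjunction/duality $\Hom_\cS(M \ot_\cS -, -) \simeq \Hom_\cS(-, M' \ot_\cS -)$ (Lemma~\ref{internal}, or directly from $\widetilde{ev}$ and $\widetilde{coev}$) to a left $A$-action on $M'$, equivalently a right $A^{op}$-module structure, characterized by the usual mate construction: $a \cdot \xi$ for $\xi \in M'$ is $(\mathrm{id}_{M'} \ot \widetilde{ev}) \circ (\mathrm{id}_{M'} \ot a\star\mathrm{id} \ot \mathrm{id}_{M'}) \circ (\mathrm{id} \ot \widetilde{coev})$, suitably interpreted. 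I would then define $ev^R : M' \ot_A M \to 1_\cS$ as the map induced on the relative tensor product by $\widetilde{ev} : M' \ot_\cS M \to 1_\cS$ — this descends precisely because $\widetilde{ev}$ equalizes the two $A$-actions by construction of the module structure on $M'$. The coevaluation $coev^R : A \to M \ot_\cS M'$ is the more delicate piece: one takes $\widetilde{coev} : 1_\cS \to M \ot_\cS M'$ and needs to produce a bimodule map out of $A$; the idea is to use that $A$ is generated by $1_A$ as a bimodule and extend $\widetilde{coev}$ by the $A$-bimodule action, i.e. $coev^R(a) = a \cdot \widetilde{coev}(\cdot) \cdot 1 = $ left multiplication by $a$ applied to $\widetilde{coev}$. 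Finally I would verify the two triangle identities of Definition~\ref{duality for modules}; these reduce, after passing through the bar-construction maps, to the triangle identities for $\widetilde{ev}, \widetilde{coev}$ together with the unitality of the $A$-action, and the compatibility of the module structure on $M'$ with the actions on $M$.

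The main obstacle I anticipate is the coherence bookkeeping in the converse direction: making $coev^R$ genuinely a map of $A$-bimodules (not merely a map of underlying objects) and checking that the triangle identities, which in the $\oo$-categorical setting are homotopies rather than equalities, match up — in particular that the homotopy witnessing one triangle identity for the plain duality can be promoted to one in the bimodule category $\cC$ after applying the relative-tensor bar construction. This is where I would either invoke the interpretation in Remark~\ref{infty,2} — duality for $A$-modules as adjointness of the morphism $A \to 1_\cS$ in the Morita $(\oo,2)$-category $Alg_{(1)}(\cS)$, so that the statement becomes: a $1$-morphism in $Alg_{(1)}(\cS)$ from $A$ to $1_\cS$ admits a right adjoint iff its underlying object, viewed as a $1$-morphism $1_\cS \to 1_\cS$, does — and appeal to general $2$-categorical facts about adjoints composing, or else work out the coherences by hand using the explicit bar-complex formula for $\ot_A$. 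Everything else is formal manipulation of dualizability data.
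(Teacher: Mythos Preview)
Your proposal is correct and follows essentially the same approach as the paper: both directions proceed by composing with the unit map $1_\cS \to A$ and the quotient map $M^\vee \ot_\cS M \to M^\vee \ot_A M$, and for the converse both equip $M'$ with its $A^{op}$-structure via the canonical identification $\End_\cS(M') \simeq \End_\cS(M)^{op}$ (your mate construction), then observe that $ev$ descends and $coev$ extends. The paper is terser than you are about the coherence bookkeeping for $coev^R$ (it simply says ``by adjunction''), and it does not spell out the triangle identities either; your caution about these $\infty$-categorical coherences is warranted but does not reflect a different strategy.
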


\begin{proof}

Suppose $M$ is $\cS$-dualizable with $\cS$-dual $M^\vee$. Then we can extend its evaluation morphism
$$
\xymatrix{
ev:M\otimes_\cS M^\vee \ar[r] & M\otimes_A M^\vee \ar[r]^-{ev^R}  & 1_\cS
}
$$
where the first map is the natural quotient, and its coevaluation morphism
$$
\xymatrix{
coev:1_\cS \ar[r] & A \ar[r]^-{coev^R} & M^\vee \otimes_\cS M
}
$$
where the first map is the unit of the algebra $A$.
It is straightforward to check that these exhibit $M^\vee$ as the plain dual of $M$.

Conversely, suppose $M$ is dualizable with dual $M'$. There is a canonical identification of algebras
$$
\End_\cS(M') \simeq \End_\cS(M)^{op},
$$
and so $M'$ inherits a natural $A^{op}$-module structure. By construction, the evaluation $ev$ descends to provide the evaluation $ev^R$. Likewise, by adjunction, the coevaluation $coev$ extends to provide the $A$-bilinear coevaluation $coev^R$.
\end{proof}

\subsection{Centers and traces}\label{defining centers}

We briefly review the theory of Hochschild (co)homology of associative
algebra objects in closed symmetric monoidal $\infty$-categories,
following \cite{BFN} to which we refer for more details.  This is an $\oo$-categorical
version of the approach to topological Hochschild homology
developed in \cite{EKMM,shipley THH}.

Let $\cS$ be a closed symmetric monoidal $\infty$-category, and $A\in \cS$ an associative algebra object. 
Then we have internal hom objects
\cite[6.1]{HA}, and given $A\ot A^{op}$-modules $M,N$, we can consider the $A\ot A^{op}$-linear
morphism object $\intHom_{A\ot A^{op}}(M, N)\in \cS$.
 Likewise, given left and right $A\ot A^{op}$ modules
$M,N\in\cS$ we have a pairing $M\ot_{A\ot A^{op}} N\in \cS$ defined by the two-sided
bar construction  \cite[4.3.4]{HA} over $A\ot A^{op}$.

\begin{definition} \label{def center}\
Let $A$ be an associative algebra object in a closed symmetric monoidal
$\oo$-category~$\cS$, and $B$ an $A$-bimodule in $\cS$.

\begin{enumerate}

\item The Hochschild cohomology (or derived center)
  $\Z(A)=\hh^*(A)\in \cS$ is the endomorphism object $\intEnd_{A\ot
  A^{op}}(A)$ of $A$ as an $A$-bimodule. More generally, the Hochschild cohomology of the bimodule $B$ is
  $$\hh^*(A,B)=\Hom_{A\ot A^{op}}(A,B).$$

\item The Hochschild homology (or derived trace)
  $\Tr(A)=\hh_*(A)\in \cS$ is the pairing object ${A\ot_{A\ot A^{op}}
  A}$ of $A$ with itself as an $A$-bimodule. More generally, the Hochschild homology of the bimodule $B$ is
  $$\hh_*(A,B)=A\ot_{A\ot A^{op}}B.$$

\end{enumerate} 
\end{definition}

\begin{example}
Taking $B=M_A\ot {}_A N$ for a left $A$-module ${}_A N$ and a right $A$-module $M_A$, we find the relative tensor product $$\hh_*(A,M_A\ot {}_A N)=M\ot_A N,$$
while taking $B=\Hom({}_A M,{}_A N)$ the Hom object in $\cS$ between two left $A$-modules $M,N$, we find the $A$-module Hom
$$\hh^*(A,\Hom({}_A M,{}_A N))=\Hom_A(M,N).$$
\end{example}

In general, the center $\Z(A)$ is again an associative algebra object in
$\cS$.  Furthermore, $\Z(A)$ comes with a canonical central morphism
$$
\xymatrix{
\fz:\Z(A)
\ar[r] &
A 
&&
F\ar@{|->}[r]
& 
F(1_A)
}
$$ while the trace $\Tr(A)$ comes with a canonical trace morphism
$$
\xymatrix{
\ftr:A 
\ar[r] &
\Tr(A)
}
$$ 
coequalizing left and right multiplication.


\sssn{Cyclic bar construction}\label{hochschild complexes}
For an associative algebra object $A$ in a symmetric monoidal $\infty$-category
$\cS$, there is a natural simplicial object $\mathbf{N}^{cyc}_*(A)$ 
and cosimplicial object $\mathbf{N}_{cyc}^*(A)$ such that the geometric realization
 $\colim \mathbf{N}^{cyc}_*(A)$ calculates $\hh_*(A)=\Tr(A)$ and the
totalization  $\lim \mathbf{N}_{cyc}^*(A)$ calculates $\hh^*(A)=\Z(A)$. 

We construct the simplicial object $\mathbf{N}^{cyc}_*(A)$ and
cosimplicial object $\mathbf{N}_{cyc}^*(A)$ as follows. The
$A$-bimodule $A$ has a canonical free simplicial resolution $C_*(A)$ as an
$A$-bimodule with terms 
$$
C_{n}(A)\simeq A^{\ot n+2}
$$ 
It is defined using
the usual formalism of cotriple resolutions as in \cite{BFN}. The face maps come from the multiplication on $A$,
and the degeneracies from the unit of $A$. The $A$-bimodule structure is the obvious action on the left and right extreme factors.

Now recall that the trace $\hh_*(A,B)$ is defined by the pairing
$A\ot_{A\ot A^{op}}B$. Since the tensor product commutes with colimits, in particular geometric realizations, we calculate
$$
\hh_*(A,B) = A\ot_{A\ot A^{op}}B \simeq |C_*(A)|\ot_{A\ot A^{op}}B
\simeq |B\ot_{A\ot A^{op}} C_*(A)|.
$$
Thus the geometric realization of $ C_*(A)\ot_{A\ot A^{op}} B$
calculates the trace $\hh_*(A,B)$.

We write $\mathbf{N}^{cyc}_*(A)$
for the simplicial object $A\ot_{A\ot A^{op}} C_*(A)$ and refer to it as the  Hochschild simplicial object.
We can evaluate the terms of the Hochschild simplicial object
$$
\mathbf{N}^{cyc}_n(A) = A\ot_{A\ot A^{op}} C_n(A)
\simeq
A\ot_{A\ot A^{op}} A^{\ot n+2} \simeq A^{\ot n+1}
$$ 
In particular, there are equivalences $\mathbf{N}^{cyc}_0(A) \simeq A$ and $\mathbf{N}^{cyc}_1(A) \simeq A\ot A$, and the two simplicial maps $A\ot A \ra A$ are 
the multiplication and the opposite multiplication of $A$. 

Similarly, recall 
that the center $\hh^*(A,B)$ is defined by the hom object
$\intHom_{A\ot A^{op}}(A,B)$. Since morphisms take colimits in the domain to limits, 
in particular geometric realizations to totalizations, we calculate
$$
\hh^*(A,B) = \intHom_{A\ot A^{op}}(A,B)
\simeq  \intHom_{A\ot A^{ op}}(|C_*(A)|, B)
\simeq \lim \intHom_{A\ot A^{ op}}(C_*(A), B)
$$
Thus the totalization of $\intHom_{A\ot A^{ op}}(C_*(A), B)$
calculates the center $\hh^*(A,B)$.

We write $\mathbf{N}_{cyc}^*(A)$
for the cosimplicial object $\intHom_{A\ot A^{ op}}(C_*(A), A)$
 and refer to it as the Hochschild cosimplicial object. 
As before, we can evaluate the terms of the
 Hochschild cosimplicial object
$$
\mathbf{N}_{cyc}^n(A) = \intHom_{A\ot A^{ op}}(C_n(A), A)
\simeq
\intHom_{A\ot A^{ op}}(A^{\ot n+2}, A)
\simeq
\intHom(A^{\ot n}, A). 
$$ 
In particular, there are equivalences $\mathbf{N}_{cyc}^0(A) \simeq A$ and 
$\mathbf{N}_{cyc}^1(A) \simeq \intHom(A, A)$, and the two cosimplicial maps $A\to \intHom(A, A)$ are induced by
the left and right multiplication of $A$. 

\sssn{Transposed Hochschild object}\label{transposed HH}
Let us now suppose that $A\in \cS$ is dualizable, with dual $A'$. 
In that case the simplicial object $C_*(A)$ defines a dual cosimplicial object $C_*(A)^t$ with cosimplices
$$
C_{n}(A)^t\simeq A'^{\ot n+2}
$$ 
and face and degeneracies given as the transposes of those of $C_*(A)$.
In this case, we can describe the center of a bimodule as a colimit
\begin{eqnarray*}
\hh^*(A,B) &\simeq &  \lim \intHom_{A\ot A^{ op}}(C_*(A), B)\\
&\simeq & | C_*(A)' \ot_{A\ot A^{op}} B|
\end{eqnarray*}


\section{Rigidity and dualizability}\label{rigid categories}

In this section, we develop some general properties of monoidal categories with duals. 

See~\cite[Section 6]{DGcat} for some related arguments 
under different hypotheses; for our applications
we need to remove the assumption that the unit is compact.

\subsection{Semi-rigid categories} 
We continue here in the general setting $\St_\CC$ of stable presentable $\oo$-categories. Given a monoidal category $A\in Alg(\St_\CC)$, its 
multiplication map $\mu:A\otimes A\to A$ can be regarded as a map of $A$-bimodules, or equivalently $A^e=A\ot A^{op}$ modules, where
the left (respectively, right) $A$-action on $A\otimes A$ is via the first (respectively, second) factor.

We will be interested in monoidal categories with enough duality in the following sense:

\begin{definition}
A monoidal category $A\in Alg(\St_\CC)$ is said to be 
{\em semi-rigid} if $A$ is compactly generated by objects that are both right and left dualizable.
\end{definition}

The definition is a variant of the standard notion of rigid tensor category, for which the 
following $\infty$-refinement can be found in \cite{DGcat}:

\begin{definition}  
A monoidal category $A\in  Alg(\St_\CC)$ is said to be {\em rigid} if
\begin{enumerate}
\item[$\bullet$] The multiplication map $\mu:A\otimes A\to A$ is quasi-proper.
\item[$\bullet$] The natural lax $A^e$-linearity of the right adjoint $\mu^r$ is strict.
\item[$\bullet$] The unit $1_A\in A$ is compact.
\end{enumerate}
\end{definition}

The terminology ``semi-rigid"
is justified by the following assertion. 

\begin{prop} A semi-rigid category with compact unit is rigid. Conversely, a compactly generated rigid category is semi-rigid.
\end{prop}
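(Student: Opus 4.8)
The plan is to establish the two implications by unwinding the definitions, the key point being the relationship between dualizability of objects in $A$ and quasi-properness/strictness of the adjoint $\mu^r$ of the multiplication. I would treat ``semi-rigid with compact unit $\Rightarrow$ rigid'' first, then the converse.

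For the forward direction, suppose $A$ is semi-rigid with compact unit. Since compact generators of $A$ are both left and right dualizable, and tensor products of dualizable objects are dualizable, the multiplication $\mu: A\otimes A\to A$ carries compact generators (which are external tensor products $c_1\boxtimes c_2$ of dualizable compacts) to $c_1\star c_2$; this is compact because compact generators are dualizable hence compact (a dualizable object in $\St_\CC$ is compact), so $\mu$ is quasi-proper. By Lemma~\ref{quasi-proper functors}, since $A\otimes A$ is compactly generated and $\mu$ is quasi-proper, its right adjoint $\mu^r$ is continuous. For the strictness of the lax $A^e$-linearity of $\mu^r$: the lax structure map compares $\mu^r(a)\star_{A^e}(b_1\boxtimes b_2)$ with $\mu^r(b_1\star a\star b_2)$, and one checks this is an equivalence on compact generators using the dualizability: if $b_1, b_2$ are dualizable then $b_1\star(-)\star b_2$ commutes with the relevant limits/colimits computing $\mu^r$, so the comparison is strict. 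Together with the hypothesis that $1_A$ is compact, all three bullets of the definition of rigid hold.

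For the converse, suppose $A$ is compactly generated and rigid. The unit $1_A$ is compact by the third bullet, so it remains to show that $A$ is compactly generated by left- and right-dualizable objects. The standard argument (as in \cite{DGcat}) is that in a rigid monoidal category \emph{every} compact object is both left and right dualizable: quasi-properness of $\mu$ together with strictness of the $A^e$-linearity of $\mu^r$ means $\mu^r$ is given by an $A$-bimodule, the ``kernel'' $\mu^r(1_A)$, and this kernel exhibits, for each compact $c$, an explicit dual via the counit/unit of the $\mu \dashv \mu^r$ adjunction restricted to $c$. More precisely, one uses that $\mu^r$ being strictly $A^e$-linear forces $\mu^r(1_A)\simeq \sum_i {}^\vee c_i \boxtimes c_i$ (an ind-object over the compact generators), and the triangle identities for $\mu\dashv\mu^r$ translate into the evaluation and coevaluation exhibiting ${}^\vee c$ as the left dual of each compact $c$; the right dual is obtained symmetrically. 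Since $A$ is by assumption compactly generated, it is then compactly generated by dualizable objects, i.e., semi-rigid.

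\textbf{Main obstacle.} The delicate step is the converse: extracting genuine duals of all compact objects from the abstract rigidity conditions (quasi-properness of $\mu$ plus strict $A^e$-linearity of $\mu^r$ plus compact unit). This requires identifying $\mu^r$ concretely as convolution against a bimodule kernel and then reading off the triangle identities; the strictness bullet is exactly what makes this work, but verifying that the resulting evaluation and coevaluation maps satisfy the zig-zag identities — rather than merely existing — is where the real content lies. I would expect to lean on the corresponding argument in \cite[Section 6]{DGcat}, adapting it to avoid their standing assumption that the unit be compact wherever it is used outside the role it plays in our third bullet.
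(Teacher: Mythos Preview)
Your approach is essentially the paper's: the forward direction uses exactly the content of Lemmas~\ref{quasiproper action} and~\ref{perfect adjoints} (quasi-properness of $\mu$ and strict $A^e$-linearity of $\mu^r$), and the converse is \cite[6.1.1]{DGcat}, which the paper simply cites. One slip to fix: your parenthetical ``a dualizable object in $\St_\CC$ is compact'' should read ``a dualizable object of $A$ is compact when $1_A$ is compact'' --- you are working inside $A$, not $\St_\CC$, and the implication dualizable $\Rightarrow$ compact uses precisely the compact-unit hypothesis via $\Hom_A(c,-)\simeq\Hom_A(1_A,{}^\vee c\star -)$. (The paper's Lemma~\ref{quasiproper action} in fact proves $\mu$ quasi-proper for \emph{any} semi-rigid $A$ without using the unit, via $\Hom_A(c_1\star c_2,-)\simeq\Hom_A(c_2,{}^\vee c_1\star -)$ and compactness of $c_2$.)
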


\begin{proof}
The first part follows from Lemmas \ref{quasiproper action} and \ref{perfect adjoints} below, 
the second is \cite[6.1.1]{DGcat}.
\end{proof}

\begin{lemma}\label{quasiproper action}
If $A\in  Alg(\St_\CC)$ is a semi-rigid category, the multiplication morphism $\mu:A\otimes A\to A$ is quasi-proper,
and so its right adjoint $\mu^r:A\to A\otimes A$ is continuous. More generally, for any $A$-module $M$, the 
action map $A\ot M\to M$ is quasi-proper.
The unit morphism $e:\Vect_\CC\to A$ is quasi-proper if and only if in addition $A$ is rigid.
\end{lemma}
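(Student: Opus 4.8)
The plan is to reduce everything to one observation about how dualizable objects act, and then run the same argument in each part.

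\emph{Key observation:} if $a\in A$ is left dualizable then the endofunctor $a\star(-)\colon A\to A$, and more generally $a\star(-)\colon M\to M$ for any $A$-module $M$, is quasi-proper; symmetrically, right dualizable objects act by quasi-proper endofunctors on the other side. Indeed, by the definition of left dualizability in Section~\ref{duals} the functor $a\star(-)$ admits the right adjoint ${}^\vee a\star(-)$, which is continuous since the monoidal product (and, in the module case, the action map) is colimit-preserving in each variable; hence $a\star(-)$ is quasi-proper by the first assertion of Lemma~\ref{quasi-proper functors}. In the module case the requisite adjunction $a\star(-)\dashv{}^\vee a\star(-)$ on $M$ is the module form of Lemma~\ref{internal} applied to the right dualizable object ${}^\vee a$ (whose dual is $a$); equivalently, the evaluation and coevaluation of $a$ act on $M$ to give the unit and counit.

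Granting this, parts (1)--(3) follow quickly. Since $A$ is compactly generated by left- and right-dualizable objects $\{c_i\}$, the square $A\ot A$ is compactly generated by the external products $\{c_i\boxtimes c_j\}$ (because $\Ind$ is symmetric monoidal, \cite[Proposition 4.4]{BFN}), and likewise $A\ot M$ is compactly generated by the $\{c_i\boxtimes m_j\}$ for $\{m_j\}$ a set of compact generators of $M$. As $\mu$ (respectively, the action map) is continuous it preserves finite colimits and retracts, so to check quasi-properness I would verify it on these generators: there $\mu(c_i\boxtimes c_j)=c_i\star c_j$ (respectively, $c_i\star m_j$) is compact by the key observation applied to the left dualizable object $c_i$. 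This gives (1) and (3); then, $A\ot A$ being compactly generated, the converse assertion of Lemma~\ref{quasi-proper functors} applied to the adjunction $\mu\dashv\mu^r$ (which exists by the adjoint functor theorem) shows $\mu^r$ is continuous, which is (2).

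For (4): $\Vect_\CC$ is compactly generated by its unit $\CC$, so $e$ is quasi-proper if and only if $e(\CC)=1_A$ is compact in $A$. If $1_A$ is compact, then combined with part~(1) (which gives that $\mu$ is quasi-proper for every semi-rigid $A$) and Lemma~\ref{perfect adjoints} (which supplies the strictness of the $A^e$-linear structure on $\mu^r$), all three conditions defining a rigid category hold, so $A$ is rigid; conversely, a rigid category has compact unit by definition, so $e$ is quasi-proper. The one point needing care is part~(3) when $M$ is not compactly generated, where one must still know that the compact objects of $A\ot M$ are controlled by external products with compact objects of $M$ (with nothing to prove if $M$ has none); alternatively, I would sidestep this by checking directly that the right adjoint of the action map is continuous and applying the first half of Lemma~\ref{quasi-proper functors}.
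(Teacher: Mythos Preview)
Your argument is correct and is essentially the paper's own proof: the paper's chain of $\Hom$-isomorphisms is exactly the unwinding of your ``key observation'' that $a\star(-)$ has continuous right adjoint ${}^\vee a\star(-)$, applied via Lemma~\ref{quasi-proper functors}. You are in fact slightly more careful than the paper in two places: you spell out why compact objects of $A\otimes M$ are generated by external products (invoking that $\Ind$ is symmetric monoidal), and you correctly flag that the claim for arbitrary $M$ tacitly assumes $M$ is compactly generated---the paper's proof has the same implicit assumption, and its later applications (e.g.\ Lemma~\ref{perfect adjoints}) impose this hypothesis anyway. Your treatment of part~(4), reducing to compactness of $1_A$ and then invoking Lemma~\ref{perfect adjoints} for the strict $A^e$-linearity of $\mu^r$, is also what the paper does, though it defers that step to the preceding Proposition.
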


\begin{proof}
For the first assertion, let $a\in A$ be compact (hence dualizable), $m\in M$ compact, and $\{n_i\}$ an arbitrary small diagram in $M$. 
Then we calculate using Lemma \ref{internal}:
\begin{eqnarray*}
\Hom_{{M}}(a\star m , \colim_i n_i))
&\simeq&  \Hom_{{M}}(m, {}^\vee a\star (\colim_i n_i))\\
&\simeq&  \Hom_{{M}}(m, \colim_i ({}^\vee a\star n_i))\\
&\simeq&  {\lim}_i \Hom_{{M}}(m, {}^\vee a\star n_i)\\
&\simeq&  {\lim}_i \Hom_{{M}}(a\star m, n_i)
\end{eqnarray*}
Hence the action of $A$ on $M$ is quasi-proper, and so by Lemma~\ref{quasi-proper functors}, its right adjoint continuous.

Since $\Hom_A(1_A, -):A\to \Vect_\CC$ is the right adjoint to $e:\Vect_\CC\to A$, the second assertion also follows from Lemma~\ref{quasi-proper functors}.
\end{proof}

\subsubsection{Linearity of adjoints}

An important technical aspect of working with semi-rigid categories is that adjoints of linear functors
between modules are canonically linear (compare \cite[Corollary 6.2.4]{DGcat}):

\begin{lemma}\label{perfect adjoints} Let $A \in  Alg(\St_\CC)$ be a semi-rigid monoidal category.
 
 Let $\cC,\cD \in \St_\CC$ be compactly generated $A$-modules, and 
  $F:\cC\to\D$ a quasi-proper (respectively, accessible
  limit-preserving) $A$-linear functor. 
  Then the right (respectively, left) adjoint of $F$ is canonically 
  $A$-linear and continuous as well. 
  \end{lemma}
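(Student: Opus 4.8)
The assertion is that for a semi-rigid monoidal category $A$, adjoints of $A$-linear functors between compactly generated $A$-modules are again $A$-linear (and continuous). The plan is to reduce linearity to a statement about compatibility with the $A$-action expressed via the projection formula, and to exploit the fact that every compact generator of $A$ is dualizable, so that the action functors $a\star(-)$ all have adjoints given by $\,{}^\vee a\star(-)$ or $a^\vee\star(-)$, which are themselves of the same form.

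First I would set up the situation. Say $F:\cC\to\cD$ is a continuous $A$-linear functor, with $A$-linearity data expressed as coherent equivalences $F(a\star c)\simeq a\star F(c)$ for $a\in A$, $c\in\cC$. Since $F$ is quasi-proper by hypothesis and $\cC$ is compactly generated, Lemma~\ref{quasi-proper functors} gives a continuous right adjoint $G:\cD\to\cC$. I want to produce natural maps $a\star G(d)\to G(a\star d)$ and show they are equivalences. The natural candidate comes from adjunction: by Lemma~\ref{internal}, for $a$ left dualizable we have $\Hom_\cD(a\star d, d')\simeq \Hom_\cD(d, {}^\vee a\star d')$, and similarly in $\cC$; combined with the ($A$-linearity of $F$ and the) $F\dashv G$ adjunction, one obtains for compact dualizable $a$ a chain of natural equivalences
\begin{align*}
\Hom_\cC(c, {}^\vee a\star G(d)) &\simeq \Hom_\cC(a\star c, G(d))\\
&\simeq \Hom_\cD(F(a\star c), d)\\
&\simeq \Hom_\cD(a\star F(c), d)\\
&\simeq \Hom_\cD(F(c), {}^\vee a\star d)\\
&\simeq \Hom_\cC(c, G({}^\vee a\star d)).
\end{align*}
Since $\cC$ is generated by compacts $c$ and every compact generator of $A$ is dualizable (hence ${}^\vee a$ ranges over a generating family as $a$ does), the Yoneda lemma yields an equivalence $b\star G(d)\simeq G(b\star d)$ for $b$ in a set of compact generators of $A$. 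I would then promote this to the full coherent $A$-linearity datum: because $G$ is continuous and both $b\star(-)$ and $G$ commute with the colimits needed to build arbitrary objects of $A$ out of the generators, the natural transformation extends; making this coherent is where one invokes the $\infty$-categorical machinery, e.g. realizing $A$-linearity as a morphism of modules in $\Mod_A(\St_\CC)$ and checking it on generators, or equivalently using that $G$ is the right adjoint in $\Mod_A(\St_\CC)$ once one knows the unit and counit of $F\dashv G$ are $A$-linear. The limit-preserving case is dual: an accessible limit-preserving $A$-linear $F$ has a left adjoint, and the same computation with right duals $a^\vee$ in place of ${}^\vee a$ produces the left-adjoint linearity, using Lemma~\ref{internal} in its second form $\Hom_A(b\star a,c)\simeq\Hom_A(b,c\star a^\vee)$.

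The main obstacle I anticipate is not the pointwise equivalence $b\star G(d)\simeq G(b\star d)$, which is the computation above, but upgrading it to a \emph{coherent} $A$-linear structure on $G$ — i.e. showing the natural transformation assembled on generators is compatible with the associativity and unit constraints of the $A$-action up to all higher homotopies. The clean way to handle this is structural rather than computational: observe that $F$, being $A$-linear, is a $1$-morphism in the $(\infty,2)$-category (or $\infty$-category of $A$-modules) $\Mod_A(\St_\CC)$; the adjunction $F\dashv G$ produced in $\St_\CC$ automatically lifts to an adjunction internal to $\Mod_A(\St_\CC)$ provided the counit $FG\to\Id$ (or unit) can be checked to be $A$-linear, and a right adjoint internal to a $2$-category is determined up to contractible choice — so one only needs the \emph{existence} of an $A$-linear right adjoint, for which the pointwise computation together with continuity of $G$ and the fact that $A$ is generated under colimits by dualizable compacts suffices (this is exactly the role of semi-rigidity: it guarantees the action functors have adjoints of the same type, so the relevant Beck–Chevalley/projection-formula squares commute). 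I would phrase the argument this way, citing \cite[Corollary 6.2.4]{DGcat} for the compact-unit case and noting that the proof there goes through verbatim once Lemma~\ref{quasiproper action} supplies continuity of $\mu^r$ and of the action-adjoints in the absence of a compact unit.
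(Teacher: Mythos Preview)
Your computation is correct and matches the paper's approach: both arguments reduce to the Hom calculation using dualizability of compact objects of $A$ via Lemma~\ref{internal}. The paper's proof, however, dispenses with your coherence concern in one line: if $F$ is $A$-linear with right adjoint $G$, then $G$ is \emph{automatically} lax $A$-linear, i.e.\ comes equipped with canonical, fully coherent comparison maps $a\star G(d)\to G(a\star d)$ (this is the standard fact that right adjoints of module functors are lax module functors, built from the unit/counit of the adjunction and the $A$-linearity data of $F$). So there is no need to construct or promote any coherence data from generators---the structure exists on the nose, and the only task is to verify the comparison maps are equivalences, which is precisely your Hom computation. Your structural digression in the last paragraph is therefore unnecessary, though not wrong; the paper simply writes down the chain of isomorphisms (with colimits in both variables, to directly handle arbitrary $a\in A$) and declares the argument for left adjoints identical.
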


\begin{proof}
 If $F:\cC\longleftrightarrow \cD: G$ form an adjoint pair, and $F$ is
equipped with an $A$-linear structure, then $G$ automatically commutes with the $A$-action in a lax sense, i.e.~up to a natural transformation.
Thus we need to check that these natural transformations are isomorphisms, which we do by writing objects as colimits of compact generators:
\begin{eqnarray*}
\Hom_{{\cC}}(\colim_i c_i, G( \colim_j a_j\star d))
&\simeq& \Hom_{{\cD}}(F(\colim_i c_i), \colim_J a_j\star d)\\
&\simeq& \Hom_{{\cD}}(\colim_i F(c_i), \colim_j a_j\star d)\\
&\simeq& \rm{lim}_i \Hom_{{\cD}}(F(c_i), \colim_j a_j\star d)\\
&\simeq& \colim_j {\rm{lim}}_i \Hom_{{\cD}}(F(c_i),  a_j\star d)\\
&\simeq& \colim_j {\rm{lim}}_i \Hom_{{\cD}}( a_j^\vee \star F(c_i), d)\\
&\simeq& \colim_j {\rm{lim}}_i \Hom_{{\cD}}(F( a_j^\vee \star c_i), d)\\
&\simeq& \colim_j {\rm{lim}}_i \Hom_{{\cC}}( a_j^\vee \star c_i, G(d))\\
&\simeq& \colim_j {\rm{lim}}_i \Hom_{{\cC}}(c_i,  a_j\star G(d))\\
&\simeq& \Hom_{{\cC}}(\colim_i c_i, \colim_j a_j\star G(d))
\end{eqnarray*}

 The argument for left
adjoints of $A$-linear functors is identical.
\end{proof}

\begin{cor}
 Let $A \in  Alg(\St_\CC)$ be a semi-rigid monoidal category.

Passing to Ind-categories and passing to right adjoints induce canonical equivalences between 
the following three $\oo$-categories:
\begin{enumerate}
\item[$\bullet$] the category of $A_c$-modules in small idempotent-complete stable categories with morphisms exact functors;
\item[$\bullet$] the category of $A$-modules in compactly generated stable categories with morphisms quasi-proper functors;
\item[$\bullet$] the opposite of the category of $A$-modules in compactly generated stable categories with morphisms accessible continuous and cocontinuous functors. 
\end{enumerate}
\end{cor}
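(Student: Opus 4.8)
The plan is to establish the three equivalences by a two-step chain, passing first between (1) and (2) via the $\Ind$-construction, and then between (2) and (3) by passing to adjoints. For the first step, recall from Section~\ref{sect ind-categories} that $\Ind$ gives an equivalence between $\Idem$ (small idempotent-complete stable categories, exact functors) and the subcategory of $\St_\CC$ of compactly generated categories with quasi-proper continuous functors, with inverse $\cC\mapsto\cC_{cpt}$. I would upgrade this to module categories over $A$: by Proposition~\ref{jacob duality} (or rather its symmetric-monoidal refinement from~\cite[Proposition 4.4]{BFN}) the functor $\Ind$ is symmetric monoidal, so it sends $A_c$-module objects in $\Idem$ to $\Ind(A_c)=A$-module objects in $\St_\CC$, and a functor of $A_c$-modules is sent to a quasi-proper continuous $A$-linear functor. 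Conversely, given a compactly generated $A$-module $\cC$ and a quasi-proper $A$-linear functor, restricting to compact objects $\cC_{cpt}$ recovers an $A_c$-module and an exact functor (here one uses that the $A$-action on $\cC$ is quasi-proper, which is Lemma~\ref{quasiproper action}, so that $\cC_{cpt}$ is indeed an $A_c$-submodule). This gives the equivalence (1)$\simeq$(2).

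For the second step, the equivalence (2)$\simeq$(3)$^{op}$ is passage to adjoints. The key input is Lemma~\ref{quasi-proper functors}: a continuous functor $F:\cC\to\cD$ between compactly generated categories is quasi-proper if and only if its right adjoint $G$ is continuous (the ``if'' direction is automatic, the converse uses compact generation of $\cC$). So the operation $F\mapsto G$ (right adjoint) is a bijection on morphisms between quasi-proper continuous functors and accessible continuous-and-cocontinuous functors, reversing direction. The content needed beyond the non-equivariant statement is that this is compatible with $A$-linearity, and this is precisely Lemma~\ref{perfect adjoints}: for $A$ semi-rigid, the right adjoint of a quasi-proper $A$-linear functor between compactly generated $A$-modules is canonically $A$-linear (and continuous). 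One also needs the reverse: starting from an accessible limit-preserving $A$-linear functor, its left adjoint is $A$-linear and quasi-proper, which is the second half of Lemma~\ref{perfect adjoints}. Assembling: $F\mapsto F^r$ defines a functor (2)$\to$(3)$^{op}$, and $\Phi\mapsto\Phi^l$ the inverse, using the unit/counit of the adjunction to produce the natural transformations witnessing functoriality and the fact that $(F^r)^l\simeq F$, $(\Phi^l)^r\simeq\Phi$ canonically.

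The one point requiring a little care — and the place I would expect to spend the most effort — is checking that these assignments are genuinely functorial at the $\oo$-categorical level, i.e.\ that they promote to equivalences of $\oo$-categories and not merely bijections on objects and morphism-sets up to homotopy. For the $\Ind$ step this is handled by the cited universal property \cite[Proposition 5.3.5.10]{topos} together with the symmetric monoidality of $\Ind$, which makes the module-category enhancement formal. For the adjoint step, the cleanest route is the one already hinted at in Remark~\ref{infty,2} and Section~\ref{reversing diagrams}: the passage to adjoints is the canonical equivalence $\St\simeq(\St^R)^{op}$, and one simply needs that it restricts to the $A$-linear, compactly generated, quasi-proper locus — which is exactly what Lemmas~\ref{quasi-proper functors} and~\ref{perfect adjoints} guarantee on morphisms, with higher coherences inherited from the ambient equivalence. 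So the main obstacle is really bookkeeping: verifying that semi-rigidity of $A$ (rather than full rigidity, as in \cite{DGcat}) suffices at each step, which it does because every invocation of duality in Lemmas~\ref{quasiproper action} and~\ref{perfect adjoints} only uses duals of compact generators, not compactness of the unit.
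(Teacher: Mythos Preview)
Your proposal is correct and follows essentially the approach the paper intends: the corollary is stated without proof in the paper, as an immediate consequence of the preceding Lemma~\ref{perfect adjoints} together with Lemma~\ref{quasi-proper functors}, Lemma~\ref{quasiproper action}, and the symmetric monoidality of $\Ind$ recalled in Section~\ref{sect ind-categories}. You have correctly identified all of these ingredients and assembled them in the expected two-step chain, with appropriate attention to the $\infty$-categorical coherences via the ambient equivalence $\St\simeq(\St^R)^{op}$ of Section~\ref{reversing diagrams}.
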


\begin{remark}
This corollary allows us to work either with plain stable presentable categories $\St$ or in the $\CC$-linear setting in calculating limits and colimits 
of categories.
\end{remark}

\subsubsection{Adjoints} 
It follows from the above discussion that a semi-rigid monoidal category $A\in Alg(\St_\CC)$ defines a comonoidal
category $A^r\in Coalg(\St_\CC)$ with the same underlying category $A^r=A$ and comultiplication $\mu^r$ the right adjoint of
the product $\mu$ on $A$. Given an $A$-module $M$, the diagram defining its module structure defines, by taking right adjoints, a diagram in the opposite category
exhibiting $M$ as a comodule for $A^r$. Moreover, this comodule structure is in fact functorial for all morphisms of $A$-modules. To see this, note
that given an action $a_M:A\ot M\to M$, the coaction of $A^r$ can be described as the composition
$$\xymatrix{a_M^r: M\ar[r]^-{\sim} & A\ot_A M \ar[r]^-{\mu^r\ot \Id} & A\ot A\ot_A M\ar[r]^-{\sim} & A\ot M }$$
where we use semi-rigidity to ensure that $\mu^r$ is a morphism of right $A$-modules.
This construction evidently commutes with all morphisms of $A$-modules. 
In other words, we have an equivalence 
$$
\Mod_A(\St_\CC) \simeq \Comod_{A^r}(\St_\CC)
$$ 
commuting with the natural forgetful functors to $\St_\CC$.

\subsection{Transposes}
A semi-rigid monoidal category $A\in Alg(St_\CC)$, being compactly generated, is dualizable with dual $A'$. 
The  coevaluation functor, 
$$
\xymatrix{
\eta:\Vect_\CC\ar[r] & A'\ot A
}$$ sends the unit $\CC$ to the Hom 
bimodule $[\Hom]\in A'\ot A$ which is determined by 
$$\Hom_{A'\ot A}(m\ot n, [\Hom])=\Hom_A(m,n)$$
on compact objects $m\in A_c^{\catop}, n\in \ot A_c$.

The dual $A'$ inherits a canonical coalgebra structure from the algebra structure on $A$ by taking transposes of
all multiplication maps. 
An $A$-module $M$,  with action map $a_M:A\ot M\to M$, inherits a comodule structure over $A'$, with coaction map given by the transpose
$$\xymatrix{a_M^t: M\ar[r]^-{\eta\ot \Id} & A'\ot A\ot M \ar[r]^-{\Id\ot a_M} & A'\ot M}$$
It is evident from this construction that the comodule structure is functorial for all
$A$-linear functors $M\to N$. In other words, we have an equivalence 
$$\Mod_A(\St_\CC) \simeq \Comod_{A'}(\St_\CC)$$ 
commuting with the natural forgetful functors to $\St_\CC$.

The left and right dualities on compact objects
$$
\xymatrix{
L(a) = {}^\vee a,&  R(a) =  a^\vee, &
\mbox{for $a\in A$ compact},
}
$$ 
canonically extend by continuity to
equivalences 
$$
\xymatrix{
L, R: A\ar[r]^-\sim & A'
}
$$
Let $LL,RR:A\to A$ denote the inverse monoidal automorphisms of $A$ given by taking double duals.
Moreover, the identities
$$\Hom_A({}^\vee(x\star m \star y), n)\simeq \Hom_A({}^\vee m, {}^{\vee\vee}y \star n\star x), $$
$$\Hom_A((x\star m \star y)^\vee, n)\simeq \Hom_A(m^\vee, y \star n\star x^{\vee\vee})$$
show that $L$ is a naturally a morphism of right $A$-modules and $R$ of left $A$-modules, while $L$ intertwines
the left action of $A$ with the action twisted by the monoidal automorphism $LL:A\to A$, and likewise $R$ intertwines
the right action of $A$ with the action twisted by the monoidal automorphism $RR:A\to A$.
In other words, we have isomorphisms of $A$-bimodules
$$\xymatrix{{}_{RR}A\ar[r]^-{L} & A'&\ar[l]_-{R}  A_{LL}}$$
where we twist the bimodule $A$ on one side or the other by the inverse monoidal automorphisms $LL,RR$.

\begin{definition}  A {\em pivotal structure} on a semi-rigid monoidal category $A \in  Alg(\St_\CC)$ is a monoidal identification of the automorphism
$LL$ (or equivalently, of its inverse $RR$) with the identity of $A$.
\end{definition}

It follows that a pivotal structure is equivalent to a lift of either of the functors
$L,R:A\to A'$ to a morphism of $A$-bimodules  (or to a monoidal identification of left and right duals in $A$).

\begin{example}
For any semi-rigid monoidal category $A\in  Alg(\St_\CC)$, the semi-rigid monoidal category $A^e=A\ot A^{op}$ has a canonical pivotal structure, deduced from its
symmetric structure $\xymatrix{A^e\ar[r]^-{\sim} & (A^e)^{op}}$ exchanging left and right duals. 
\end{example}

\begin{remark}[The canonical trace]
It is enlightening (though not necessary for the rest of this paper) to understand the notion of a pivotal structure in terms of traces.

Suppose $A\in  Alg(\St_\CC)$ is a semi-rigid monoidal category. Consider the unit morphism $u:\Vect_\CC\to A$ and its transpose
$$
\xymatrix{ 
\tau:=u^{tr}:A\ar[r]^-{R} & A' \ar[r]^-{u'} & \Vect_\CC' \ar[r]^-{R^{-1}} & \Vect_\CC
    }$$ 

If $a\in A$ is compact, then we have 
$$
\tau(a) \simeq \Hom_A(1_A, a)
$$
and in general $\tau$ is the continuous extension of this assignment (which will agree with the Hom functional
if and only if $A$ is rigid). 

Given compact objects
$a, b \in A$, we have canonical identifications
\begin{equation*}
\tau(a \star b)=
 \Hom_A(1_A, a\star b)
\simeq    \Hom_A((b^\vee)^\vee, a)
 = \tau((b^\vee)^\vee \star a)
\end{equation*}

It follows that a pivotal structure on $A$, i.e., a monoidal identification $RR\simeq \Id$, gives rise to a trace structure on the
functional $\tau$, i.e., a functorial identification $\tau(a\star b)\simeq \tau(b\star a)$ plus higher compatibilities. More precisely,
a pivotal structure gives rise to a factorization $$\xymatrix{A\ar[r]^-{\mathfrak{tr}} \ar[dr]_-{\tau} & \Tr(A) \ar[d]\\ & \cS}$$ and is in fact equivalent to such a
factorization.
\end{remark}

\subsubsection{Adjoints are transposes}
We may now identify the two coalgebras $(A,\mu^r)$ and $(A',\mu^t)$ obtained from a semi-rigid monoidal category:

\begin{prop}   Let $A \in  Alg(\St_\CC)$ be a semi-rigid monoidal category.

There is a canonical equivalence of comonoidal categories $(A,\mu^r)\simeq (A',\mu^t)$ given on the level
of objects by taking right duals, i.e., by the equivalence $R:A\to A'$.

In particular, for an $A$-module $M$ with action map $a_M:A\ot M\to M$, the right adjoint 
$a_M^r:M\to A\ot M$ and the transpose $a_M^t: M\to A'\ot M$ are related by the duality $R$ in the sense
that the following diagram canonically commutes:
$$\xymatrix{ M \ar[dr]_-{a_M^r} \ar[r]^-{a_M^t} & A' \ot M \ar[d]^-{R\ot \Id}\\
& A\ot M}$$
\end{prop}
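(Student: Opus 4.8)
## Proof proposal

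The plan is to show that both comonoidal structures on the underlying category $A$ — the one coming from the right adjoint $\mu^r$ of multiplication, and the one coming from the transpose $\mu^t$ under the self-duality $A\simeq A'$ — arise from the \emph{same} universal source, and that this source is canonically identified with $A$ via right duals. The key observation is that, for a semi-rigid $A$, we have already established (in the ``Transposes'' subsection) the bimodule identification $R\colon A_{LL}\risom A'$, where $A_{LL}$ denotes $A$ with its right action twisted by the double-left-dual automorphism $LL$, and in particular a plain equivalence $R\colon A\risom A'$ of left $A$-modules. Transporting the coalgebra structure $\mu^t$ on $A'$ back along $R$ therefore produces a coalgebra structure on $A$; I claim this agrees with $\mu^r$.

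First I would reduce everything to compact objects and the defining adjunction/duality identities. The comultiplication $\mu^r\colon A\to A\ot A$ is, by Lemma~\ref{internal} (the ``adjoint = dual'' reinterpretation), characterized by the functorial identification $\Hom_{A\ot A}(x\ot y,\mu^r(z))\simeq \Hom_A(x\star y, z)$ for compact $x,y,z$. On the other side, the transposed comultiplication $\mu^t$ on $A'$ is by construction the dual of $\mu\colon A\ot A\to A$ under the self-duality of $A$, so it is characterized by the analogous formula $\Hom_{A'\ot A'}(\,-\,,\mu^t(z'))\simeq \Hom_{A'}(\,-\,,\text{(transpose of }\mu)\ldots)$, which after unwinding the identification $A'\simeq A$ via $R$ becomes exactly the same Hom-formula, because $R$ on compact objects is $a\mapsto a^\vee$ and the duality identities of the preceding subsection translate $\star$ on $A$ into the transposed comultiplication. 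So both $\mu^r$ and $R^{-1}\circ\mu^t\circ R$ corepresent the same functor, hence are canonically equivalent. The coherence (that this extends to an equivalence of coalgebra, i.e.\ comonoidal, objects and not just of the bare comultiplication maps) follows because both coalgebra structures are obtained by applying a (right-adjoint, respectively transpose) functorial operation to the \emph{same} algebra $A$, and these operations are themselves symmetric-monoidal / compatible with composition — concretely, one checks the counit and coassociativity cells match by the same corepresentability argument applied to the relevant composites.

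For the module statement, I would argue in the same spirit. Given an $A$-module $M$ with action $a_M\colon A\ot M\to M$, Lemma~\ref{perfect adjoints} guarantees the right adjoint $a_M^r\colon M\to A\ot M$ is again $A$-linear and continuous, so it is determined on compacts by $\Hom_{A\ot M}(x\ot m, a_M^r(n))\simeq \Hom_M(x\star m,n)$. The transpose $a_M^t\colon M\to A'\ot M$ was defined as the composite $M\xrightarrow{\eta\ot\Id} A'\ot A\ot M\xrightarrow{\Id\ot a_M}A'\ot M$, and using the explicit description of the coevaluation $\eta$ as the $[\Hom]$-bimodule together with Lemma~\ref{internal} in the module form $\Hom_M(a^\vee\star m,n)\simeq\Hom_M(m,a\star n)$, one computes that $a_M^t$ corepresents the functor $n\mapsto \Hom_M(m, R^{-1}(-)\star n)$ — exactly the composite $(R\ot\Id)\circ a_M^r$. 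Thus the stated square commutes, and since all the identifications used are the canonical ones from the previous subsections, the commutativity is canonical and functorial in $M$.

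The main obstacle I anticipate is not the pointwise comparison on compact objects — that is a routine chain of duality/adjunction identities — but rather upgrading it to a coherent equivalence of $\infty$-categorical coalgebra objects (and, for modules, a coherent homotopy making the triangle commute together with all higher compatibilities with the comodule structure maps). The clean way to handle this is to avoid constructing the coherence by hand: instead observe that $\mu^r$ and $\mu^t$ both arise functorially from $\mu$ by applying, respectively, the operation ``pass to right adjoint'' on $\Mod$-categories (Lemma~\ref{perfect adjoints} and its Corollary, which identifies $\Mod_A$ with a category of comodules) and the operation ``pass to transpose'' under the self-duality of $A$ (which, by the ``Transposes'' subsection, also identifies $\Mod_A$ with $\Comod_{A'}$); the equivalence $R\colon A\simeq A'$ of the preceding subsection already matches these two identifications of $\Mod_A$, so the desired comonoidal equivalence is obtained by transporting structure along an equivalence of $\infty$-categories rather than assembled cell by cell. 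This is exactly the level of rigor the rest of Section~\ref{rigid categories} operates at.
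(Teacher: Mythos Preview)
Your proposal is essentially correct and lands on the same key argument as the paper: the coherent comonoidal equivalence comes from the chain of identifications $\Comod_{A^r}(\St_\CC)\simeq \Mod_A(\St_\CC)\simeq \Comod_{A'}(\St_\CC)$ of categories comonadic over $\St_\CC$, forcing the representing coalgebras to agree canonically. Your recognition that the pointwise check on compacts is not enough and that one should instead transport along this equivalence is exactly right.

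The one organizational difference worth noting is how the identification with $R$ is verified. The paper does not check the module triangle for general $M$ directly. Instead it observes that since $A$ generates $\Mod_A$ under colimits, it suffices to treat the universal case $M=A$ with $a_M=\mu$; and since both $\mu^r$ and $(R\ot\Id)\circ a_A^t$ are maps of \emph{right} $A$-modules $A\to A\ot A$, it suffices to compare their values on the unit $1_A$. This reduces the entire verification to a single Hom-computation on compacts: $\Hom_{A\ot A}(m\ot n,\mu^r(1_A))\simeq \Hom_A(m,n^\vee)\simeq \Hom_{A\ot A}(m\ot n,(R\ot\Id)[\Hom])$. Your approach of checking corepresentability for general $M$ would also work but is strictly more laborious; the paper's reduction-to-the-unit trick is the cleaner route and avoids having to unwind $a_M^t$ for arbitrary $M$.
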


\begin{proof}
The first assertion follows from the identifications 
$$\Comod_{A^r}(\St_\CC)\simeq \Mod_A(\St_\CC) \simeq \Comod_{A'}(\St_\CC)
$$ of categories comonadic over $\St_\CC$, whence the coalgebras $A^r$ and $A'$ representing 
the corresponding comonads are naturally identified.

To verify that the identification is given by right duals, let us check by hand the second part of the proposition.
Note that the action $a_M$ is quasi-proper by Lemma \ref{quasiproper action}.
Since $A$ generates the category of $A$-modules by colimits, it suffices to establish the claim
in the universal case $M=A$, where $a=a_A$ is the left action of $A$ on itself.
Thus we have to identify the map $$
\xymatrix{\mu^r:A\ar[r] &A\ot A
}$$ with the map
$$\xymatrix{A\ar[r]^-{a^t} & A'\ot A \ar[r]^{R\ot\Id} & A\ot A.}$$
Both are maps of right $A$-modules, hence it suffices to identify the images of the unit $$
\xymatrix{
\eta_A:\Vect_\CC\ar[r] & A\ar[r] & A\ot A
}$$
under the two maps. For the first, it is characterized on compact objects by
\begin{eqnarray*}
\Hom_A(m\ot n, \mu^r(1_A))&\simeq& \Hom_A(m\star n, 1_A)\\
&\simeq& \Hom_A(m, n^\vee)
\end{eqnarray*}
For the second, the composition
$$
\xymatrix{\Vect_\CC\ar[r]& A\ar[r]& A'\ot A\ar[r] & A\ot A}$$
sends the unit to $(R\ot \Id)([\Hom])\in A\ot A$, which is characterized on compact objects by
\begin{eqnarray*}
\Hom_{A\ot A}(m\ot n, (R\ot \Id)([\Hom])) &\simeq& \Hom_A({}^\vee m, n)\\
 &\simeq & \Hom_A(m, n^\vee)
\end{eqnarray*}
\end{proof}

\subsection{Bar constructions} Recall that $C_*(A)$ denotes
the canonical simplicial resolution of $A$ as an $A$-bimodule, i.e., the monadic bar construction
for the algebra $A$.
Let $C_*(A)^{tr}$ denote the cosimplicial object obtained
by passing to transposes, i.e., the comonadic cobar construction for the coalgebra $(A',\mu^t)$. Likewise, let
$C_*(A)^r$ denote the cosimplicial object obtained by passing to right
adjoints, i.e., the comonadic cobar construction for $(A^r,\mu^r)$. Note however that unless $A$ is rigid, the right adjoint
to the unit morphism is not continuous, hence this cosimplicial object is not a diagram of continuous functors.  We use the term
semi-cosimplicial object to refer to the diagram object obtained from
such a cosimplicial object by forgetting its codegeneracies, see
\cite[Notation 6.5.3.6]{topos}. By \cite[Lemma 6.5.3.7]{topos}, the
inclusion of the semisimplical $\infty$-category in the simplicial one
is cofinal, so that we may calculate totalizations of cosimplicial
objects as limits over the underlying semi-cosimplicial objects. The underlying semi-cosimplicial object of $C_*(A)^r$ is
indeed a diagram of continuous functors.
The following is an immediate corollary of the above proposition:

\begin{cor}\label{reversing arrows}
If $A\in  Alg(\St_\CC)$ is a semi-rigid monoidal category, then the semi-cosimplicial objects underlying $C_*(A)^r$ and $C_*(A)^{tr}$
are canonically equivalent. (If in addition $A$ is rigid, the cosimplicial objects 
$C_*(A)^r$ and $C_*(A)^{tr}$
are canonically equivalent.) In fact, there is a natural equivalence of diagrams of $A$-bimodules
$$\xymatrix{ C_*(A)^r_{LL} \ar[r]^-{R} &C_*(A)^{tr}}$$
where the right $A$-action on $C_*(A)^r$ is twisted by the monoidal automorphism $LL$.

\end{cor}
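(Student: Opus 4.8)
The plan is to unwind the preceding proposition, which has already done the essential work --- the identification of the two coalgebra structures $(A,\mu^r)$ and $(A',\mu^t)$ via right duality $R$ --- so that the corollary becomes a matter of transporting that identification termwise through the bar construction. First I would spell out the three diagrams explicitly. The resolution $C_*(A)$ is the two-sided bar resolution of $A$ as an $A$-bimodule, with $C_n(A)\simeq A^{\ot n+2}$, faces the iterated multiplications, degeneracies the insertions of the unit $e\colon\Vect_\CC\to A$, and bimodule structure via the action on the two extreme factors. Applying right adjoints termwise gives $C_*(A)^r$, the comonadic cobar construction of the comonoid $(A^r,\mu^r)$ over itself, with the same underlying terms, cofaces the insertions of $\mu^r$, and codegeneracies the insertions of the right adjoint $e^r=\Hom_A(1_A,-)$ of the unit; by Lemma~\ref{perfect adjoints} each coface and codegeneracy is again $A$-bilinear, but $e^r$ fails to be continuous unless $A$ is rigid, so in general only the underlying semi-cosimplicial diagram consists of continuous functors. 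Applying transposes instead gives $C_*(A)^{tr}$, the comonadic cobar construction of $(A',\mu^t)$ over itself, with terms $(A^{\ot n+2})'\simeq (A')^{\ot n+2}$, cofaces the insertions of $\mu^t$, codegeneracies the insertions of the transpose $e^t$ of the unit, and bimodule structure on the two extreme factors coming from the $A$-bimodule $A'$ and the isomorphisms ${}_{RR}A\xrightarrow{L}A'\xleftarrow{R}A_{LL}$ recorded above.

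Next I would invoke the preceding proposition: right duality is an equivalence of comonoidal categories $R\colon(A,\mu^r)\xrightarrow{\sim}(A',\mu^t)$, and --- applied to the $A$-module $M=A$ --- it carries the coaction $a_A^r$ to the coaction $a_A^t$, so $R$ also matches the two self-bicomodules being resolved. Since the comonadic cobar construction is functorial in the pair (comonoid, comodule), applying $R$ in each of the $n+2$ tensor factors of $C_n(A)^r$ produces a termwise equivalence $C_n(A)^r\xrightarrow{\sim}C_n(A)^{tr}$ compatible with the cofaces --- since $R$ intertwines $\mu^r$ with $\mu^t$ --- and with the codegeneracies --- since $R$ carries $e^r=\Hom_A(1_A,-)$ to the transpose $e^t$, which is exactly the identification of functionals spelled out in the remark on the canonical trace above. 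This yields the claimed equivalence of underlying semi-cosimplicial objects, and when $A$ is rigid $e^r$ is continuous, so $C_*(A)^r$ and $C_*(A)^{tr}$ are honest cosimplicial diagrams of continuous functors and the same comparison upgrades to an equivalence of cosimplicial objects.

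Finally I would track the $A$-bimodule structures. These are carried on each $C_n$ by the two extreme tensor factors, on which $R$ restricts to the bimodule isomorphisms ${}_{RR}A\xrightarrow{L}A'$ and $A_{LL}\xrightarrow{R}A'$ from the Transposes subsection: $R$ intertwines the left $A$-action without modification and the right $A$-action after twisting by the double-dual automorphism $LL$. Propagating this through the cosimplicial diagram gives the stated equivalence of diagrams of $A$-bimodules $C_*(A)^r_{LL}\xrightarrow{R}C_*(A)^{tr}$. The substantive input --- that $\mu^r$ and $\mu^t$ are identified under $R$ --- is exactly the content of the preceding proposition, which is why this is only a corollary; the remaining work is bookkeeping, and the one genuinely delicate point is the continuity failure of $e^r$ in the non-rigid case, which is precisely what forces the passage to semi-cosimplicial objects.
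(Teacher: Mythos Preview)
Your proposal is correct and takes essentially the same approach as the paper: the paper states only that this is ``an immediate corollary of the above proposition'' and gives no further argument, and what you have written is precisely the unwinding one would do to see why it is immediate. Your handling of the bimodule twist via $LL$ and of the semi-cosimplicial/cosimplicial distinction (through the continuity failure of $e^r$) matches the paper's setup exactly.
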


Recall from Section \ref{transposed HH} that the cosimplicial diagram $C_*(A)^{tr}$ can be used to calculate Hochschild cohomology
of $A$-bimodules. The cosimplicial object $C_*(A)^r$ can likewise be used to calculate Hochschild homology, so that Corollary 
\ref{reversing arrows} results in a general identification of traces and centers:

\begin{prop}\label{reversing hh} Let $A\in  Alg(\St_\CC)$ be a semi-rigid monoidal category.
\begin{enumerate}
\item For any $A$-bimodule, we have an equivalence between a twist of the center and the trace: 
$$\hh_*(A,B)\simeq \hh^*(A,{}_{LL} B).$$

\item In particular, for $M$ a dualizable left $A$-module with dual $M'$ and $N$ an arbitrary left $A$-module,
there is a canonical equivalence $$\Hom_A(M,{}_{LL}N) \simeq M' \ot_A N$$ between a twist of the relative $\Hom$ and the relative tensor
product of the dual of $M$ with $N$. 
\end{enumerate}
\end{prop}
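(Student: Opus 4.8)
The plan is to deduce both parts from Corollary~\ref{reversing arrows}; granting that, the proposition becomes a matter of recognizing which (co)simplicial objects compute the trace and the center, and then keeping track of the $LL$-twist.

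For part (1), I would first recall from Section~\ref{hochschild complexes} that the trace is the cyclic bar construction, $\hh_*(A,B)\simeq|C_*(A)\otimes_{A^e}B|$, since the relative tensor product commutes with geometric realizations. Next I would apply the reversing-diagrams principle of Section~\ref{reversing diagrams} to the simplicial object $C_*(A)\otimes_{A^e}B$ in $\St_\CC$: its face maps are continuous, so replacing them by their right adjoints rewrites this geometric realization as a totalization of a semi-cosimplicial object. This is exactly where semi-rigidity enters, twice: Lemma~\ref{quasiproper action} ensures that these right adjoints are continuous (the multiplication $\mu$, hence every face map, is quasi-proper), and Lemma~\ref{perfect adjoints}, applied to the semi-rigid category $A^e=A\otimes A^{op}$, ensures that they are $A^e$-linear, so the resulting semi-cosimplicial object is precisely $C_*(A)^r\otimes_{A^e}B$; the codegeneracies are discarded because the right adjoint of the unit is not continuous unless $A$ is rigid, which loses nothing by cofinality of semi-cosimplicial objects. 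Thus $C_*(A)^r\otimes_{A^e}B$ computes $\hh_*(A,B)$. On the other hand, since $A$ is compactly generated and hence dualizable in $\St_\CC$ by Proposition~\ref{jacob duality}, the transposed cobar object $C_*(A)^{tr}\otimes_{A^e}B$ computes $\hh^*(A,B)$, as recalled in Section~\ref{transposed HH} and just before the statement. Finally, Corollary~\ref{reversing arrows} gives an equivalence of semi-cosimplicial $A$-bimodule diagrams between $C_*(A)^{tr}$ and $C_*(A)^r$ with its right $A$-action twisted by $LL$; tensoring over $A^e$ with $B$ and absorbing the twist into the coefficient bimodule turns this into an equivalence between the semi-cosimplicial object computing $\hh^*(A,{}_{LL}B)$ and the one computing $\hh_*(A,B)$, and passing to totalizations gives $\hh_*(A,B)\simeq\hh^*(A,{}_{LL}B)$.

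For part (2), I would specialize part (1) to the $A$-bimodule $B=\Hom_\cS({}_A M,{}_A N)$ (where $\cS=\St_\CC$), the internal mapping object whose left $A$-action comes from $N$ and whose right $A$-action comes from $M$. By the computations recorded after Definition~\ref{def center}, $\hh^*(A,B)\simeq\Hom_A(M,N)$; since the left $A$-action on $B$ is the one coming from $N$, the twisted bimodule ${}_{LL}B$ is $\Hom_\cS({}_A M,{}_{LL}N)$, so $\hh^*(A,{}_{LL}B)\simeq\Hom_A(M,{}_{LL}N)$. On the other side, the hypothesis that $M$ is dualizable together with Lemma~\ref{duals compatible} gives an equivalence of $A$-bimodules $\Hom_\cS({}_A M,{}_A N)\simeq M'_A\otimes_\cS{}_A N$ — with $A$ acting on the right on the dual factor $M'$ and on the left on $N$ — so, again by the computations after Definition~\ref{def center}, $\hh_*(A,B)\simeq M'\otimes_A N$. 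Combining with part (1) yields $M'\otimes_A N\simeq\Hom_A(M,{}_{LL}N)$, which is the assertion.

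I expect the genuine content to sit in the inputs rather than in this assembly: in Corollary~\ref{reversing arrows} and in the identification of the transposed and adjoint cobar objects with the Hochschild co/homology complexes, where semi-rigidity is exactly what keeps the relevant right adjoints inside $\St_\CC$ and $A^e$-linear so that the reversing-diagrams step is legitimate. The remaining delicacy is bookkeeping: tracking the variances of the $A^e=A\otimes A^{op}$-actions through transposes and right adjoints, and checking that the twist produced by Corollary~\ref{reversing arrows}, after base change along $\otimes_{A^e}B$, lands precisely on the left $A$-action of $B$ (equivalently, on $N$ in part (2)) rather than on the other side. This is where I expect the most care to be needed, though no new idea.
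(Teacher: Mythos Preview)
Your proposal is correct and follows essentially the same approach as the paper: rewrite $\hh_*(A,B)$ as a limit via the reversing-diagrams principle, identify the resulting semi-cosimplicial object with $C_*(A)^r\otimes_{A^e}B$, and then apply Corollary~\ref{reversing arrows} to transport to $C_*(A)^{tr}\otimes_{A^e}{}_{LL}B$, which computes $\hh^*(A,{}_{LL}B)$. For part~(2) the paper takes $B=N\otimes M'$ directly rather than passing through $\Hom_\cS(M,N)$, but since $M$ is dualizable these are the same bimodule, so this is only a cosmetic difference; your version is arguably clearer about where dualizability of $M$ is used.
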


\begin{proof}
Recall the description of Hochschild homology via the bar resolution, $$
\hh_*(A,B) = A\ot_{A\ot A^{op}}B \simeq \colim C_*(A)\ot_{A\ot A^{op}}B.$$
By Section \ref{reversing diagrams}, this colimit can be identified with the limit over
the corresponding cosimplicial diagram, obtained by taking right adjoints (note that the simplicial object $C_*(A)\ot_{A\ot A^{op}} B$, the relative tensor product
of a diagram of continuous functors by $B$, is a diagram
of continuous functors). This adjoint cosimplicial diagram is naturally identified with $C_*(A)^r\ot_{A\ot A^{op}} B$, i.e., can be calculated
by taking right adjoints in $C_*(A)$.
Hence by Corollary \ref{reversing arrows}, we find
\begin{eqnarray*}
\hh_*(A,B) & \simeq& \colim C_*(A)\ot_{A\ot A^{op}}B\\
&\simeq& \lim C_*(A)^r \ot_{A\ot A^{op}} B\\
&\simeq& \lim C_*(A)^t_{RR} \ot_{A\ot A^{op}} B\\
&\simeq& \lim C_*(A)^t\ot_{A\ot A^{op}} {}_{LL} B\\
&\simeq& \hh^*(A,{}_{LL} B).
\end{eqnarray*}

The second assertion follows immediately from the special case $B=N\ot M'$. 

\end{proof}

\begin{prop} \label{bimodule dual}
For $A$ semi-rigid and $M$ an $A$-module,
$M$ is $A$-dualizable if and only if $M$ is dualizable in $\St_\CC$. In this case, the $A$-dual is canonically identified with the twisted naive dual $M'_{RR}$ as right $A$-modules.

In particular, the canonical pivotal structure on $A^e= A\otimes A^{op}$ induces an identification of the $A^e$-dual and $\St_\CC$-duals of $A$ (or any $A$-bimodule).
\end{prop}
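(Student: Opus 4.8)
The plan is to reduce the claim for a general $A$-module $M$ to the universal case $M = A$ (the free module), and then deduce the $A^e$-bimodule statement by applying the module case to the semi-rigid algebra $A^e = A \otimes A^{op}$ equipped with its canonical pivotal structure. First I would observe that one direction is automatic: by Lemma~\ref{duals compatible}, if $M$ is $A$-dualizable then its underlying object in $\St_\CC$ is dualizable, since $A$-dualizability is a special case of bimodule-dualizability and Lemma~\ref{duals compatible} handles the passage to underlying objects. So the content is the converse: if $M\in\St_\CC$ is dualizable, then $M$ is $A$-dualizable, with $A$-dual identified with $M'_{RR}$.

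The key step is the construction of the coevaluation $coev^L:\Vect_\CC\to {}^\vee M\otimes_A M$ (equivalently, the $A^e$-linear evaluation $ev^L:M\otimes_{\St_\CC} {}^\vee M\to A$) out of the plain self-duality of $M$. Here I would use Proposition~\ref{reversing hh}(2): for $M$ dualizable over $\St_\CC$ and $N$ an arbitrary left $A$-module, there is a canonical equivalence $\Hom_A(M,{}_{LL}N)\simeq M'\otimes_A N$. Taking $N = M_{LL}$ (or rather untwisting appropriately), the identity map $M\to M$ furnishes a canonical class in $M'\otimes_A M$ after the correct $RR/LL$-twist — this is exactly the coevaluation, and it exhibits $M'_{RR}$ as a left-adjoint/dual of $M$ in the $(\infty,2)$-categorical sense of Remark~\ref{infty,2} (viewing $M$ as a morphism $A\to 1_\cS$ in $Alg_{(1)}(\St_\CC)$). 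The evaluation direction, $ev^L:M\otimes_{\St_\CC}{}^\vee M\to A$ as a map of $A$-bimodules, is obtained dually: the plain coevaluation $\Vect_\CC\to M'\otimes M$ in $\St_\CC$ must be promoted to an $A$-bimodule map into $A$, which again uses Proposition~\ref{reversing hh}(1) applied to the bimodule $B = M\otimes M'$ together with the compatibility recorded in Corollary~\ref{reversing arrows} between the adjoint and transpose bar constructions. One then checks the two triangle/zig-zag identities; because everything is built from the canonical equivalences of Proposition~\ref{reversing hh} and the duality functor $R:A\to A'$, these identities reduce on compact generators to the identities already verified in the ``adjoints are transposes'' proposition and in Lemma~\ref{internal}, so no genuinely new computation is required — only bookkeeping of the $LL$ versus $RR$ twists.

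For the final sentence about $A^e$: apply the module case to the monoidal category $A^e = A\otimes A^{op}$, which is semi-rigid (compact generators are external products of compact generators of $A$, each dualizable on both sides) and carries the canonical pivotal structure coming from the symmetry $A^e\simeq (A^e)^{op}$, as in the Example preceding this proposition. Pivotality means $LL\simeq RR\simeq \Id$ on $A^e$, so the twist $M'_{RR}$ collapses to the untwisted dual $M'$; hence for any $A^e$-module — in particular $A$ itself regarded as an $A$-bimodule — the $A^e$-dual agrees with the $\St_\CC$-dual, compatibly with evaluation and coevaluation.

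The main obstacle I anticipate is not the existence of the duality data but the \emph{coherence}: verifying that the candidate $ev^L$ and $coev^L$, assembled from the equivalences of Proposition~\ref{reversing hh} via the cosimplicial bar constructions, actually satisfy the adjunction identities as morphisms in $\St_\CC$ (not merely after passing to homotopy categories). This is where one must be careful that the cofinality of semi-cosimplicial diagrams (invoked before Corollary~\ref{reversing arrows}) is used correctly when $A$ is only semi-rigid rather than rigid, since then the relevant cobar construction is genuinely only a diagram of continuous functors after forgetting codegeneracies. I expect the cleanest route is to phrase the whole argument in the $(\infty,2)$-categorical language of adjoints in $Alg_{(1)}(\St_\CC)$ (Remark~\ref{infty,2}), where the adjunction identities are packaged automatically once one exhibits the unit and counit, rather than checking zig-zags by hand.
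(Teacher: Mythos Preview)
Your approach is essentially the paper's: the key input is Proposition~\ref{reversing hh}, and the $A^e$ statement follows from the module case plus the canonical pivotal structure on $A^e$. Two points of divergence are worth noting.

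First, a small error: you invoke Lemma~\ref{duals compatible} for the direction ``$A$-dualizable $\Rightarrow$ dualizable in $\St_\CC$'', but that lemma concerns $\cS$-dualizability (right adjoint of $M$ as a Morita morphism), not $A$-dualizability (left adjoint). These are distinct notions a priori; the whole content of the proposition is that semi-rigidity forces them to coincide. The paper instead runs the same Proposition~\ref{reversing hh} argument in reverse: given an $A$-dual $\wt M$, one exhibits $\wt M_{LL}$ as the plain dual of $M$ directly.

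Second, and more usefully, the paper's formulation dissolves your coherence worry. Rather than building $ev^L$ and $coev^L$ separately and then verifying zig-zag identities, the paper uses Proposition~\ref{reversing hh}(2) twice to identify
\[
M'_{RR}\simeq \Hom_A(M,A), \qquad M'_{RR}\otimes_A M \simeq \Hom_A(M,M).
\]
With these identifications the evaluation is the tautological pairing $M\otimes \Hom_A(M,A)\to A$ and the coevaluation is the unit $\Vect_\CC\to \Hom_A(M,M)$ picking out $\Id_M$. The triangle identities are then the standard ones for an internal Hom and require no further checking. This is cleaner than tracing coherence through the (semi-)cosimplicial bar constructions, and it avoids the $(\infty,2)$-categorical repackaging you suggest as a fallback.
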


\begin{proof}
Suppose $M$ is dualizable in $\cS$. 
We first apply Proposition \ref{reversing hh}  and find
\begin{eqnarray*}
\Hom_{A}(M,A)&\simeq& M' \ot_A {}_{LL} A\\
 &\simeq& M'_{RR}. 
 \end{eqnarray*}

On the other hand 
 \begin{eqnarray*}
 M'_{RR} \otimes_{A} M &\simeq& M' \otimes_{A} {}_{LL}M\\
 &\simeq& \Hom_{A}(M,M).
 \end{eqnarray*}

We can now realize $M'_{RR}$ as the $A$-dual of $M$ by taking the canonical evaluation
and coevaluation  maps
$$
\xymatrix{
M\ot  M'_{RR} \simeq M\otimes \Hom_{A}(M,A)\ar[r]^-{ev} &  A
&
\Vect_\CC \ar[r] &  \Hom_{A}(M,M)\simeq M'_{RR}\ot_{A} M
}
$$

Similarly, if $M$ is $A$-dualizable with dual $\wt{M}$, we can explicitly exhibit a duality in $\St_\CC$ between
$M$ and $M'=\wt{M}_{LL}$.
\end{proof}


\subsection{Dualizability}
\label{full dualizability}

We now interpret the results above as establishing the 2-dualizability of semi-rigid categories as objects in the symmetric monoidal
$(\infty,2)$-category $Alg_{(1)}(\St_\CC)$, the Morita 2-category of monoidal categories~\cite[Section 4.1]{jacob TFT}. This interpretation will not require any technical notions of $(\infty,2)$-category theory --- 
we recall the notion of 2-dualizability as it specializes in our setting to specific identities formulated for monoidal $\oo$-categories.

A Morita morphism between two (stable presentable) monoidal $\oo$-categories $A, B$ is an $A^{op}\otimes B$-module category.
The evaluation and coevaluation Morita morphisms for a monoidal category $A$ are the Morita morphisms
$$
\xymatrix{
\Vect_\CC \ar[r]^-{A} & A\ot A^{op} \ar[r]^-{A} & \Vect_\CC
}
$$
given by the object $A$ itself in an obvious way.

\begin{definition}\label{jacob defs}
(1) \cite[Proposition 4.2.3]{jacob TFT} A 2-dualizable object of
  $Alg_{(1)}(\St_\CC)$ is a monoidal category $A$ such that the evaluation Morita morphism $ev_A:A\ot
  A^{op}\to \cS$ has both a right and a left adjoint.

(2) \cite[Definition 4.2.6]{jacob TFT} A Calabi-Yau structure on a monoidal category $A\in Alg_{(1)}(\St_\CC)$
is an $S^1$-invariant functional $\Tr(A)=A\ot_{A\ot
  A^{op}} A\to \cS$ which is the counit of an adjunction between
$ev_A$ and $coev_A$. 
\end{definition} 

In this definition, an adjoint for a Morita morphism is given by a (left or right) dual Morita morphism (as in Remark~\ref{infty,2})
together with unit and counit morphisms satisfying the standard identities.
The $S^1=SO(2)$ action on $\Tr(A)$ is an $\infty$-categorical version of the Connes cyclic structure on Hochschild homology, and is 
provided by the one-dimensional Cobordism Hypothesis, thanks to the identification of $\Tr(A)$ with 
the dimension of the dualizable object $A$ in the underlying Morita $(\infty,1)$-category. 
It can be described explicitly via topological chiral homology, see the proof of Theorem~\ref{CY Hecke}.

We will also need a natural weakening of the notion of Calabi-Yau structure.
Namely, a 2-dualizable monoidal category $A$ admits a canonical automorphism $S_A$, the Serre automorphism, with the property
$$ev^R_A = (S_A\ot \Id_{A^{op}})\circ coev_A$$
(see \cite[Proposition 4.2.3]{jacob TFT}). It can be interpreted as a monodromy of the object $A$ around the $SO(2)$-action, i.e., the first obstruction
to $SO(2)$-invariance of $A$, or as encoding the action of the free group on the circle $\Omega S^2=\Omega\Sigma SO(2)$ on $A$.

\begin{definition}
A weakly Calabi-Yau monoidal category is a 2-dualizable monoidal category with a trivialization of the
Serre automorphism $S_A\simeq \Id_A$, i.e., an isomorphism of $A$-bimodules $S_A\simeq A$. 
\end{definition}

Given such a trivialization, one can define a trace functional 
$$
\xymatrix{
A\ar[r]^-{\mathfrak{tr}} \ar[dr]_-{\tau} &\Tr(A)\simeq ev_A \circ coev_A \simeq ev_A\circ ev^R_A \ar[d] \\&  \Vect_\CC}
$$
with the vertical map the counit of the adjunction.
A Calabi-Yau structure is then a factorization of this trace through the fixed points of
$S^1$ on $\Tr(A)$.

The notion of 2-dualizability reproduces the two senses of duality
for $A$ as an $A^e$-module appearing in Definition~\ref{duality for
  modules}.

%
%

\begin{lemma}\label{separable} 
(1) A monoidal category $A$ is $A^e$-dualizable if and only if the evaluation Morita morphism $ev_A:A^e\to \Vect_\CC$ admits a left adjoint. 
In this case, the left adjoint is given by the bimodule dual~$ A^! \in A^e\on{-mod}$.

(2)  A monoidal category $A$ is $\St_\CC$-dualizable
if and only if the evaluation Morita morphism $ev_A:A^e\to \Vect_\CC$ admits a colimit-preserving right adjoint.
In this case, the right adjoint is given by the $\St_\CC$-dual $A'\in A^e\on{-mod}$.

(3) The Serre automorphism $S_A$ of a 2-dualizable category $A$ is represented (as Morita endomorphism of $A$) by the $A$-bimodule $A'$.
\end{lemma}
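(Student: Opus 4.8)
\textbf{Proof plan for Lemma~\ref{separable}.}

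The plan is to unwind each equivalence from the definitions recalled just before the statement, using the dictionary between ``duality for $A$ as an $A^e$-module'' (Definition~\ref{duality for modules}) and ``(left/right) adjoints of the evaluation Morita morphism'' (Remark~\ref{infty,2} and Definition~\ref{jacob defs}). The key observation throughout is that, for a monoidal category $A$, the coevaluation Morita morphism $coev_A:\Vect_\CC\to A^e$ \emph{is} the bimodule $A\in A^e\on{-mod}$, the evaluation morphism $ev_A:A^e\to\Vect_\CC$ is $A$ regarded as an $A^e$-module-to-$\Vect_\CC$-morphism, and that composing Morita morphisms is relative tensor product; thus an adjunction of Morita morphisms $A^e\rightleftarrows\Vect_\CC$ is exactly adjunction data of the form appearing in Definition~\ref{duality for modules} with the roles of $A$ and $1_\cS=\Vect_\CC$ played by $A^e$ and $\Vect_\CC$.

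For part (1): a left adjoint of $ev_A$ is a Morita morphism $\Vect_\CC\to A^e$, i.e. an $A^e$-module $L$, together with unit and counit $2$-morphisms. Unwinding, the counit is a map $L\ot_{A^e}A\to 1_{\Vect_\CC}$ and the unit is a map $A\to A\ot_{\Vect_\CC}L$ of $A^e$-modules, satisfying the triangle identities --- precisely the data exhibiting $L$ as ${}^\vee A$ in the sense of Definition~\ref{duality for modules}, i.e. $A$ being $A^e$-dualizable (for $\cS=\Vect_\CC$, $\cC=A^e\on{-mod}$). By the uniqueness of adjoints, $L$ is identified with the bimodule dual $A^!$ whenever it exists; note here I do not need semi-rigidity, only the formal identification. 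First I would state this unwinding carefully, then invoke uniqueness of left adjoints to pin down $L\simeq A^!$.

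For part (2): dually, a \emph{colimit-preserving} right adjoint of $ev_A$ is an $A^e$-module $R$ together with counit $A\ot_{\Vect_\CC}R\to A$ (in $A^e\on{-mod}$) and unit $1_{\Vect_\CC}\to R\ot_{A^e}A$, which is exactly the data of Definition~\ref{duality for modules} exhibiting $A$ as $\St_\CC$-dualizable with $\St_\CC$-dual $R$; the colimit-preservation requirement is what forces $R$ to actually compute the $\St_\CC$-dual rather than a merely lax notion. By Lemma~\ref{duals compatible}, $A$ is $\St_\CC$-dualizable iff the underlying category $A\in\St_\CC$ is dualizable, and then $R=A'=\Ind(A_{cpt}^\catop)$ with the induced $A^e$-module structure (Proposition~\ref{jacob duality}). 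For part (3): by definition the Serre automorphism satisfies $ev^R_A=(S_A\ot\Id_{A^{op}})\circ coev_A$; reading this as an equation of Morita morphisms $\Vect_\CC\to A^e$, the left side is the $A^e$-module $A'$ (by part (2)) and the right side is $coev_A=A$ post-composed with the Morita endomorphism $S_A$ of $A$, which acts by relative tensor with the $A$-bimodule representing $S_A$; comparing underlying $A^e$-modules gives that $S_A$ is represented by $A'$. The main obstacle I anticipate is purely bookkeeping: making the translation between the $(\infty,2)$-categorical language of adjoint Morita morphisms and the explicit bimodule formulas of Definition~\ref{duality for modules} genuinely precise (tracking which tensor products are over $A^e$ versus over $\Vect_\CC$, and which side the module structures act on), and correctly handling the colimit-preservation hypothesis in (2) --- but no substantive new input beyond Lemma~\ref{duals compatible}, Proposition~\ref{jacob duality}, and the cited statements from \cite{jacob TFT} is needed.
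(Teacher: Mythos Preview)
Your approach is correct and is essentially the paper's own: the paper's proof is literally the one-liner ``the assertions are immediate from the definitions,'' and you are spelling out exactly that unwinding via Remark~\ref{infty,2} and Definition~\ref{duality for modules}. One small slip worth flagging (and consistent with the bookkeeping caveat you already anticipated): in part (1) your explicit unit/counit formulas for the adjunction $L\dashv ev_A$ have their labels and directions reversed --- the counit should be a map of $A^e$-bimodules $L\circ ev_A\to \Id_{A^e}$ (i.e.\ $A\otimes_{\Vect_\CC} L\to A^e$, matching $ev^L$ of Definition~\ref{duality for modules}), and the unit should be $\Id_{\Vect_\CC}\to ev_A\circ L$ (matching $coev^L$); once this is straightened out the identification with $A^e$-dualizability is exact.
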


\begin{proof}
The assertions are immediate from the definitions.
The bimodule giving the left
adjoint is the bimodule dual $ A^!$.
The bimodule giving the right
adjoint is the plain dual $A'$. 
Hence the discrepancy $S_A$ between the right adjoint and the coevaluation ${}_{\Vect_\CC}A_{A^e}$ is also given by the bimodule $A'$.
%
%
%
%
%
%
%
%
%
%
\end{proof}

Now our prior results admit the following reinterpretation.

\begin{theorem}\label{perfect dualizable} Let $A\in  Alg(\St_\CC)$ be a 
semi-rigid category.  Then $A$ is 2-dualizable. 

A pivotal structure on $A$ gives rise to a weak Calabi-Yau structure on $A$.
\end{theorem}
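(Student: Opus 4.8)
The plan is to verify the two clauses of Theorem~\ref{perfect dualizable} by unwinding Definition~\ref{jacob defs} in light of Lemma~\ref{separable} and the structural results of the preceding subsections. For 2-dualizability, I need the evaluation Morita morphism $ev_A\colon A^e\to\Vect_\CC$ (given by the $A$-bimodule $A$) to admit both a left and a right adjoint as a Morita morphism. By Lemma~\ref{separable}(1), the existence of a left adjoint is exactly $A^e$-dualizability of $A$, and by Lemma~\ref{separable}(2), a colimit-preserving right adjoint corresponds to $\St_\CC$-dualizability of $A$. Both are supplied by the earlier work: $A$ is compactly generated (it is semi-rigid), hence dualizable in $\St_\CC$ with dual $A'$ by Proposition~\ref{jacob duality}, giving $\St_\CC$-dualizability; and Proposition~\ref{bimodule dual} then upgrades this to $A^e$-dualizability, since for $A$ semi-rigid the $A^e$-module $A$ is $A^e$-dualizable if and only if it is $\St_\CC$-dualizable, with $A^e$-dual the twisted dual $A'_{RR}$. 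So the first clause assembles from citing Proposition~\ref{jacob duality}, Proposition~\ref{bimodule dual}, and Lemma~\ref{separable}(1)--(2), with no new computation.

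\textbf{Identifying the Serre automorphism.} The heart of the second clause is Lemma~\ref{separable}(3): for a 2-dualizable $A$, the Serre automorphism $S_A$, as a Morita endomorphism of $A$, is represented by the $A$-bimodule $A'$ (the discrepancy between the right adjoint $\mathrm{coev}_A$-composite and the actual coevaluation). A weak Calabi-Yau structure is, by definition, a trivialization $S_A\simeq \Id_A$, i.e.\ an isomorphism of $A$-bimodules $A'\simeq A$. Now a pivotal structure on $A$ is, by Definition, precisely a monoidal identification $LL\simeq \Id_A$; and the bimodule computation at the end of Section~\ref{monoidal infinity}'s treatment of transposes records the isomorphisms of $A$-bimodules $\xymatrix{{}_{RR}A\ar[r]^-{L}&A'&\ar[l]_-{R}A_{LL}}$. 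So once $LL\simeq\Id_A$ (hence $RR\simeq\Id_A$) as monoidal automorphisms, the bimodule $A'$ becomes canonically isomorphic to the untwisted bimodule $A$ via $R$ (or $L$). Composing, a pivotal structure yields exactly the $A$-bimodule isomorphism $S_A\simeq A'\simeq A$ demanded by the definition of weak Calabi-Yau, establishing the second assertion.

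\textbf{The main subtlety} is bookkeeping the twists: one must check that the monoidal nature of the pivotal identification $LL\simeq\Id$ (not merely an identification of underlying functors) is what makes the resulting $A'\simeq A$ an isomorphism of \emph{bimodules} and not just of left or right modules, and that this matches, on the nose, the bimodule representing $S_A$ coming out of Lemma~\ref{separable}(3). This is where I expect to spend the real effort: tracking that the $LL$-twist on one side of $A$ and the $RR$-twist on the other are intertwined by $L,R$ exactly as in the displayed bimodule isomorphisms, and that trivializing $LL$ monoidally trivializes both compatibly. The $S^1$-invariance is \emph{not} claimed here --- the statement only promises a \emph{weak} Calabi-Yau structure, i.e.\ a trivialization of the Serre automorphism alone --- so I would explicitly flag that upgrading to a genuine Calabi-Yau structure (a factorization through the $S^1$-fixed points of $\Tr(A)$) is a strictly stronger datum, treated separately, and does not follow from a pivotal structure alone. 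Modulo the twist-bookkeeping, the proof is a short assembly of Propositions~\ref{jacob duality}, \ref{bimodule dual}, Lemma~\ref{separable}, and the bimodule identities preceding the definition of pivotal structure.
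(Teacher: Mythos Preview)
Your proposal is correct and follows essentially the same route as the paper's proof: assemble Proposition~\ref{jacob duality} and Proposition~\ref{bimodule dual} with Lemma~\ref{separable} to get 2-dualizability, then use Lemma~\ref{separable}(3) identifying $S_A$ with the bimodule $A'$ together with the bimodule isomorphisms ${}_{RR}A\simeq A'\simeq A_{LL}$ to convert a pivotal structure into a trivialization of $S_A$. The only small omission is that the passage from plain dualizability of $A$ in $\St_\CC$ (Proposition~\ref{jacob duality}) to $\St_\CC$-dualizability of $A$ as an $A^e$-module (as required by Lemma~\ref{separable}(2)) goes through Lemma~\ref{duals compatible}, which you use implicitly but do not cite.
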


\begin{proof}
Proposition~\ref{jacob duality} and Lemma~\ref{duals compatible} imply $A$ is $\St_\CC$-dualizable.
Proposition~\ref{bimodule dual} implies $A$ is $A^e$-dualizable, with $A^e$ and $\cS$-dual identified. Hence Lemma~\ref{separable}
implies $A$ is fully dualizable. Moreover a pivotal structure on $A$ is equivalent to a particular identification $S_A=A'\simeq A$ of bimodules, as required
to make $A$ weakly Calabi-Yau.
\end{proof}


\sn{Preliminaries on $\D$-modules}\label{D modules}


This section is devoted to a review of the theory of $\D$-modules on schemes and stacks following \cite{finiteness}. As  
explained in \cite{finiteness}, while this material is well-known at the level of triangulated categories (standard 
references include Bernstein's lecture notes~\cite{bernstein}
and the books by Borel~\cite{borel} and Kashiwara~\cite{kashiwara} for $\D$-modules on varieties, and Bernstein-Lunts' book~\cite{BL} 
and Beilinson-Drinfeld's manuscript~\cite[Chapters 1 and 7]{BD} for $\D$-modules on stacks), no comprehensive account seems to exist yet at the level of
 dg categories (though the aspects of the theory that concern pullbacks are developed in \cite{dennisnick}).


\ssn{$\D$-modules on schemes}
Throughout what follows, 
by a scheme, we will always mean a quasi-compact, separated derived scheme over $\CC$.

To any scheme $X$, there is attached a dg category  $\D(X)$ of $\D$-modules on $X$, defined by taking
 ind-coherent sheaves on the de Rham stack of $X$.
The category $\D(X)$ is insensitive to the derived or
non-reduced structure of $X$, so one may assume $X$ is a reduced classical scheme for concreteness. 
When $X$ is smooth, $\D(X)$ can be canonically identified with the traditional dg derived category of 
right $\D_X$-modules (as well as that of left $\D_X$-modules). 

Given a map $f:X\to Y$ of schemes, there are continuous functors
$$
\xymatrix{
f_*:\D(X)\ar[r] & \D(Y)
&
f^!:\D(Y)\ar[r] & \D(X).
}
$$
They satisfy standard composition identities: given a sequence of maps
$$
\xymatrix{
X \ar[r]^-f & Y \ar[r]^-{g} & Z
}
$$
there are canonical equivalences of functors
$(gf)_* \simeq  g_*  f_*$, $(gf)^! \simeq  f^! g^!$. (See Remark \ref{correspondences} for more
on the functoriality of the assignment $X\mapsto \D(X)$.)

Note that $\D(pt)\simeq \Vect_\CC$.
For the map $\pi:X\to pt$, we set  $\omega_X:=\pi^!\CC \in \D(X)$, so that $f^!\omega_Y \simeq \omega_X$ for any map $f:X\to Y$.
By definition, the functor of de Rham cohomology 
is given by pushforward along $\pi:X\to pt$
which we denote by
$$
\xymatrix{
\Gamma_{dR} :=\pi_*:\D(X)\ar[r] & \D(pt)\simeq \Vect_\CC
}
$$
It is representable by an object denoted by $\CC_X\in \D(X)$ in the sense that there is a functorial isomorphism
$$
\xymatrix{
\Gamma_{dR}(\fM) \simeq \Hom_{\D(X)}(\CC_X, \fM) & \fM\in \D(X)
}
$$

%

For a map $f:X\to Y$ of schemes,
the functors $f^!,f_*$  satisfy the following standard properties:


$\bullet$ Base change: given a Cartesian square
$$
\xymatrix{
X \ti_Y Z \ar[r]^-{\tilde g} \ar[d]_-{\tilde f} & X \ar[d]^-f
\\
Z \ar[r]^-g & Y
}
$$
there is a canonical equivalence of functors 
$$\tilde g_*\tilde f^! \simeq f^! g_*.
$$


$\bullet$ Projection formula:  there is a functorial equivalence
$$
\xymatrix{
f_*(f^!(\fN) \otimes \fM) \simeq f_*(\fM) \otimes \fN
& \fM\in \D(X), \fN\in \D(Y)
}$$

$\bullet$ If $f$ is proper, there is an adjunction $(f_*,f^!)$, and so in particular, $f_*$ is quasi-proper. 

$\bullet$ If $f$ is an open embedding, there is an adjunction $(f^!,f_*)$. 
More generally, if $f$ is smooth, there exists a functor 
$$
\xymatrix{
f^*:\D(Y)\ar[r] & \D(X)
}
$$ 
along with an adjunction $(f^*,f_*)$  
and canonical isomorphism 
$$
\xymatrix{
f^*\simeq f^![-2(\dim X - \dim Y)]
}
$$
In particular $f^*,f^!$ are quasi-proper, and moreover, $f^*\CC_Y\simeq\CC_X$.

$\bullet$ Kashiwara's lemma: for $i:Z\to X$ a closed embedding, $i_*$ induces 
an equivalence between $\D(Z)$ and the full subcategory $\D_Z(X) \subset \D(X)$ of  $\D$-modules on $X$ vanishing on $X\setminus Z$. The inverse is
given by the restriction of $i^!$.


\begin{remark}\label{correspondences}
As explained in \cite{FG,finiteness}, a natural setting for a complete formulation
of the structure carried by the functors $f^!,f_*$ on $\D$-modules is to consider the assignment $X\mapsto \D(X)$ as a functor
on the $\oo$-category of derived schemes with morphisms given by correspondences. This  automatically encodes 
base change (as the composition law) as well as the proper and smooth adjunctions and all the relevant compatibilities. 
\end{remark}

\subsection{Coherence, tensor product, and Verdier duality}
A  $\D$-module on a scheme $X$ is coherent if its cohomology sheaves are locally finitely generated
 $\D_X$-modules.
We
 denote by $\Dcoh(X)\subset \D(X)$ the  full subcategory of coherent $\D$-modules.

Coherent $\D$-modules are precisely the compact objects of $\D(X)$ and also generate $\D(X)$, so there
is a canonical identification 
$$\xymatrix{
\D(X)\simeq \Ind\Dcoh(X)
}
$$ Since $f_*$ for proper morphisms is quasi-proper, it preserves coherence,
and similarly since $f^*,f^!$ for smooth morphisms are quasi-proper, they preserve coherence. Moreover, the canonical objects $\omega_X, \CC_X\in \D(X)$ are coherent.

External tensor product provides a natural equivalence 
$$
\xymatrix{
\boxtimes:\D(X)\otimes \D(Y)\ar[r]^-\sim &  \D(X\times Y)
}
$$
which restricts to an equivalence 
$$
\xymatrix{
\boxtimes:\Dcoh(X)\otimes \Dcoh(Y)\ar[r]^-\sim &  \Dcoh(X\times Y)
}
$$

There is a symmetric monoidal structure on $\D(X)$ with multiplication 
$$
\xymatrix{
\otimes: \D(X)\ot \D(X) \ar[r]^-{\boxtimes} & \D(X \times X) \ar[r]^-{\Delta^!} & \D(X)}
$$
where $\Delta:X\to X\times X$ is the diagonal map.
The unit is $\omega_X\in \D(X)$.
The pullback
 $f^!$ is symmetric monoidal.

Recall from  Section~\ref{restricted op} that for a category $\cC = \Ind \cC_{cpt}$, we denote by $\cC_{cpt}^\catop$  the opposite category of $\cC_{cpt}$,
and that the category $\cC'=\Ind (\cC_{cpt}^\catop)$ is  dual to $\cC$.

Verdier duality is the unique involutive equivalence
$$\xymatrix{\DD_X:\Dcoh(X)^\catop\ar[r]^-{\sim}& \Dcoh(X)}$$
such that there is a functorial equivalence
\begin{equation}\label{Verdier}
\Hom_{\D(X)}(\DD_X(\fM),\fN)\simeq \Hom_{\D(X)}(\CC_X, \fM\ot \fN) \qquad
\fM,\fN\in \Dcoh(X)
\end{equation}
%
Note that equation~\eqref{Verdier} holds for any $\fN\in\D(X)$ by continuity.

The continuous extension of Verdier duality to all $\D$-modules provides a canonical
self-duality 
$$\xymatrix{\DD_X:\D(X)' \ar[r]^-{\sim}& \D(X) }
$$ 
Note that equation~\eqref{Verdier} now holds for any $\fM, \fN\in\D(X)$ by continuity.  In particular, equation~\eqref{Verdier} presents the counit of the canonical self-duality in the form
$$
\xymatrix{
\D(X)\otimes \D(X)\ar[r] & \D(X) & \fM\otimes \fN \ar@{|->}[r] & \Gamma_{dR}(\fM\otimes \fN)
}
$$

For a smooth morphism $f:X\to Y$, there is a canonical equivalence $f^*\simeq\DD_X f^! \DD_Y$,
and in particular, $\DD_X$ exchanges $\CC_X$ and $\omega_X$.
For a proper morphism $f:X\to Y$, there is a canonical equivalence 
$f_*\simeq \DD_Y f_* \DD_X$.\footnote{We thank N. Rozenblyum for pointing out this follows from the adjunction $(f_*,f^!)$ for proper maps together with 
the identification of $f^!$ with the transpose of $f_*$ for arbitrary maps.}


\subsection{$\D$-modules on stacks}
Throughout what follows, 
by a stack, we will always mean a quasi-compact stack  over $\CC$ with affine diagonal.
By a representable morphism, we will
always mean a quasi-compact schematic morphism.

For schemes $X$, the category $\D(X)$ satisfies   smooth descent with respect to $!$-pullbacks. 
Thus for a stack $X$, one may define the category $\D(X)$ as a limit over smooth covers. 

More explicitly
given a smooth cover $U\to X$ by a scheme, with
 induced augmented  Cech simplicial 
scheme $U_*\to X$, one obtains a cosimplicial category $\D(U_*)$, with maps given by $!$-pullbacks, such that its totalization satisfies
$$
\D(X) \simeq\lim \D(U_*)
$$ 
(See \cite[Section 7.5]{BD} for a discussion in the language of  derived categories.) 
Alternatively, one may  calculate $\D(X)$ as the limit over $*$-pullbacks (over the smooth face maps) in the above Cech
simplicial scheme. 

For an arbitrary  morphism $f:X\to Y$ of stacks, composition identities provide a continuous pullback functor $f^!$ which can be calculated locally with respect to smooth covers. In particular, there is an object $\omega_X\in \D(X)$ whose
$!$-pullback along any smooth cover $U\to X$ is the corresponding object $\omega_U\in \D(U)$.

For a representable morphism  $f:X\to Y$, base change provides a continuous functor $f_*$
which can be calculated locally with respect to smooth covers.

For a smooth representable morphism $f:X\to Y$, there is a functor $f^*$ along with an adjunction $(f^*, f_*)$
and
 canonical isomorphism 
$
f^*\simeq f^![-2(\dim X - \dim Y)].
$

One can extend the definition of $f_*$ to arbitrary  morphisms $f:X\to Y$,
but it is no longer continuous. In particular, the de Rham cohomology functor $\Gamma_{dR}$ is not continuous in general.
It is however representable by an object $\CC_X\in \D(X)$ whose $*$-pullback along any smooth cover $U\to X$ is the corresponding object $\CC_U\in \D(U)$.

%
%

\subsection{More on coherence, tensor product, and Verdier duality}

Since  $f^!$ for a smooth morphism $f:X\to Y$ preserves coherence, one may define coherent $\D$-modules
on a stack by requiring coherence locally on smooth covers. 
We
 denote by $\Dcoh(X)\subset \D(X)$ the  full subcategory of coherent $\D$-modules.

By \cite[Theorem 8.1.1]{finiteness}, for any 
stack $X$, 
 the category $\D(X)$ is compactly generated
 and all compact objects are coherent. 
However, it is 
 not necessarily true that all coherent $\D$-modules are compact. For example, the object $\CC_X \in \D(X)$ representing
the de Rham cohomology functor $\Gamma_{dR}$ is coherent (and in fact holonomic) but  not in general compact. The case of the classifying space $X=BG$ of  an affine group $G$
is illuminating and worked out in detail in \cite[Section 7.2]{finiteness}. 
In this case,  $\Gamma_{dR}$ calculates
equivariant cohomology,
which is not continuous even when $G$ is the multiplicative group. More generally, finite rank vector bundles with flat connection on a smooth stack
are coherent (and in fact holonomic) but not necessarily compact. 

In general, there is the notion of  {\em safe} stacks characterized by the assumption that 
coherent $\D$-modules are compact.
In \cite[Section 10.2]{finiteness}, the following conditions are shown to be equivalent:

$\bullet$ all coherent $\D$-modules on $X$ are compact,

$\bullet$ the object $\CC_X$ is compact, i.e., the functor $\Gamma_{dR}$ is continuous, 

$\bullet$ the identity component of the automorphism group of any geometric point of $X$ is unipotent.

For example, any Deligne-Mumford stack is safe.

More generally, there is the relative notion of safe morphism $f:X\to Y$ of stacks characterized by the assumption that $f_*$ is continuous. It guarantees  
  familiar properties 
 such as the projection formula hold.

We will  make repeated use of \cite[Corollary 8.3.4]{finiteness} which asserts that for any
stacks (in fact, for any prestacks), 
external tensor product provides a natural equivalence 
$$
\xymatrix{
\boxtimes:\D(X)\otimes \D(Y)\ar[r]^-\sim &  \D(X\times Y)
}
$$

There is  a resulting symmetric monoidal structure on $\D(X)$ whose multiplication can be calculated locally
with respect to smooth covers.

One can extend Verdier duality to $\D$-modules on stacks by applying it locally, intertwining the realizations of $\D$-modules in terms of descent under $!$- and $*$-pullbacks.
Moreover, Verdier duality restricts to compact objects, where as in equation~\eqref{Verdier}, it is characterized by a functorial equivalence
\begin{equation}\label{Verdierstacks}
\xymatrix{
\Hom_{\D(X)}(\DD_X(\fM),\fN)\simeq  \Hom_{\D(X)}(\CC_X, \fM\ot \fN) 
& \fM, \fN \in \D(X)_{cpt}
}
\end{equation}
Here as elsewhere we write $\D(X)_{cpt}$ for the compact objects of $\D(X)$.
Its continuous extension to all $\D$-modules provides a canonical self-duality 
$$\xymatrix{\DD_X:\D(X)' \ar[r]^-{\sim}& \D(X) }
$$ 
Note that equation~\eqref{Verdierstacks} does not necessarily hold for arbitrary $\fM, \fN\in\D(X)$.
The left hand side is continuous in each, but 
$\CC_X\in \D(X)$ may not be compact and hence the right hand side
may not be continuous.



\section{Integral transforms for $\D$-modules}\label{integral transforms}

We continue with our standing assumptions of preceding sections. In particular, 
by a scheme, we mean a quasi-compact, separated derived scheme.
By a stack, we mean a 
quasi-compact stack with affine diagonal, and by a representable morphism, we mean a quasi-compact schematic morphism. A stack is safe if its de Rham cohomology functor is continuous, and more generally, a morphism
is  safe if the pushforward of $\D$-modules along it is continuous.

In the previous section, we recalled that for any stacks $X_1, X_2$, external tensor product induces an equivalence
$$\xymatrix{\D(X_1)\ot \D(X_2)\ar[r]^-{\sim}& \D(X_1\times X_2).}$$
Verdier duality induces a canonical self-duality
 $$
 \xymatrix{\D(X_1)\simeq \D(X_1)'
 }$$

If in addition $X_1$ is safe, then the integral transform construction provides a description of the resulting equivalence
 $$
 \xymatrix{\D(X_1 \times X_2)\ar[r]^-\sim & \Fun^L(\D(X_1), D(X_2))
 &
 \fM \ar@{|->}[r] &  \pi_{2*}(\pi_1^!(-) \ot \fM)
 }$$

The above equivalences fit into a canonically commuting diagram
  $$
 \xymatrix{
\ar[d]_-\sim  \D(X_1)\otimes \D(X_2) \ar[r]^-\sim &  \D(X_1 \times X_2) \ar[d]^-\sim \\
  \D(X_1)'\otimes \D(X_2) \ar[r]^-\sim &   \Fun^L(\D(X_1), D(X_2))
 }$$
 Namely, for $\fM_1\in \D(X_1), \fM_2\in \D(X_2)$, there is a canonical identity
 $$
 \xymatrix{
 \pi_{2*}(\pi_1^!(-) \otimes (\pi_1^!\fM_1 \otimes \pi_2^!\fM_2)) \simeq 
 \Gamma(X_1, \fM_1 \otimes - ) \otimes \fM_2\simeq 
 \Hom_{\D(X_1)}(\DD_{X_1} \fM_1, -) \otimes \fM_2
 }
 $$
 between going around the upper right and  the lower left parts of the diagram.

 In this section, we will record some simple relative versions of the above assertions. While we will consider very restrictive
 hypotheses, the assertions often fail in any more relaxed setting.
 
 
\ssn{Functoriality over schemes}

We will not need the results of this section but they naturally fit into the theme of the paper and are likely useful elsewhere.

\begin{theorem}\label{relative product}
For a diagram $X_1\to Y \leftarrow X_2$ where $X_1,X_2$ are stacks and $Y$ a scheme,
the canonical  map induced by $!$-pullbacks is an equivalence
$$
\xymatrix{
\D(X_1)\ot_{\D(Y)} \D(X_2) \ar[r]^-\sim & \D(X_1\times_Y X_2).
}$$
\end{theorem}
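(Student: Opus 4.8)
The plan is to reduce the statement, by an external‑product manipulation, to a base‑change statement for $\D$-modules along a closed immersion, and then to prove that directly using Kashiwara's lemma. Since $!$-pullback is symmetric monoidal, each $\D(X_i)$ is a commutative algebra object in $\Mod_{\D(Y)}(\St_\CC)$, so the standard identification of a relative tensor product as a base change of an external tensor product, together with the external product equivalence $\boxtimes$ of \cite[Corollary~8.3.4]{finiteness} applied to $X_1,X_2$ and to $Y,Y$, gives
$$\D(X_1)\otimes_{\D(Y)}\D(X_2)\;\simeq\;\D(X_1\times X_2)\otimes_{\D(Y\times Y)}\D(Y),$$
where $\D(Y)$ is a $\D(Y\times Y)$-module via $!$-pullback along the diagonal $\delta\colon Y\to Y\times Y$, a closed immersion since $Y$ is separated. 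As $(X_1\times X_2)\times_{Y\times Y}Y\simeq X_1\times_Y X_2$, and the canonical comparison functor (induced by the $\pi_i^!$) matches the analogous one in this rewriting, it suffices to prove: for a closed immersion $i\colon V_0\hookrightarrow V$ of schemes and an arbitrary stack $Z$ over $V$, with base changes $i'\colon Z\times_V V_0\hookrightarrow Z$ and $g'\colon Z\times_V V_0\to V_0$ of $i$ and of the structure map $g\colon Z\to V$, the functor $\D(Z)\otimes_{\D(V)}\D(V_0)\to\D(Z\times_V V_0)$ is an equivalence.

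For this, recall that by Kashiwara's lemma (on $V$, and on $Z$ by smooth descent) $i_*$ and $i'_*$ are fully faithful with essential images the subcategories of $\D$-modules supported on $V_0$ and on $Z\times_V V_0$; the projection formula makes $i_*$ into a $\D(V)$-linear functor, and $i^!$ is $\D(V)$-linear because it is symmetric monoidal. Tensoring the adjunction $(i_*,i^!)$ with $\Id_{\D(Z)}$ over $\D(V)$ yields a $\D(V)$-linear left adjoint $\D(Z)\otimes_{\D(V)}\D(V_0)\to\D(Z)$ which is fully faithful (its unit, being $\Id_{\D(Z)}$ tensored with the unit of $(i_*,i^!)$, is an equivalence since $i^!i_*\simeq\Id$), and whose image is therefore the full subcategory coreflected by the idempotent comonad $\Id_{\D(Z)}\otimes_{\D(V)}(i_*i^!)$. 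Now $i_*i^!\simeq(-)\otimes i_*\omega_{V_0}$ by the projection formula, and base change gives $g^!(i_*\omega_{V_0})\simeq i'_*\omega_{Z\times_V V_0}$; hence this comonad on $\D(Z)$ is the operation of tensoring with $i'_*\omega_{Z\times_V V_0}$, which by the projection formula on $Z$ is $i'_*i'^!$, the coreflection onto $\D$-modules on $Z$ supported on $Z\times_V V_0$. By Kashiwara's lemma that subcategory is $\D(Z\times_V V_0)$, and unwinding the construction identifies the resulting equivalence with the canonical comparison functor.

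The external‑product reductions of the first paragraph are routine; the substance of the argument is the closed‑immersion case, where Kashiwara's lemma, the projection formula and base change together force $\D(-)$ to carry the pullback square defining $X_1\times_Y X_2$ to a pushout of categories. The feature that makes this work is that $\delta$, hence $i'$, is \emph{proper and representable}, so every pushforward in sight is continuous and satisfies the projection formula; without some such hypothesis on the base the analogous statement fails, which is precisely why the subsequent results of this section impose that $X_1\to Y$ be Deligne--Mumford, or more generally safe.
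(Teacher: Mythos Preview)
Your argument is correct, and it reaches the same destination as the paper by a genuinely different route. The paper computes the relative tensor product directly via the two-sided bar construction: the $k$-simplices are $\D(X_1\times Y^k\times X_2)$ with $!$-pullback face maps, and one augments to $\D(X_1\times_Y X_2)$ by pulling back along the relative diagonal $j_{-1}$. Since $Y$ is a separated scheme, all of the relevant relative diagonals $j_k$ are closed embeddings, and Kashiwara's lemma together with base change shows that the pushforwards $j_{k*}$ supply extra degeneracies, making the augmented simplicial diagram \emph{split}; one then invokes \cite[Lemma~6.1.3.16]{topos} to conclude it is a colimit diagram. Your approach instead performs the algebraic reduction $M\otimes_A N\simeq (M\otimes N)\otimes_{A\otimes A}A$ once, so that everything collapses to the single base-change statement along the closed immersion $\delta$, and then identifies both sides with the same idempotent colocalization of $\D(Z)$ via Kashiwara and the projection formula. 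Both arguments use exactly the same geometric input (the diagonal of $Y$ is a closed immersion, hence Kashiwara applies); the paper's version packages it as a splitting of a simplicial object, while yours packages it as an identification of idempotent comonads. Your route arguably isolates the key step more cleanly and avoids the appeal to the split-simplicial lemma.

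Two minor remarks. First, the mention of $\D(X_i)$ being a commutative algebra over $\D(Y)$ is not actually what drives the first reduction: the identity $M\otimes_A N\simeq (M\otimes N)\otimes_{A\otimes A}A$ holds for modules over a commutative algebra $A$, and that is all you use. Second, your closing commentary slightly conflates two distinct hypotheses: the present theorem needs $Y$ to be a separated scheme (so that $\delta$ is a closed immersion), whereas the safeness condition on $X_1\to Y$ in the subsequent Theorem~\ref{thm selfdual} and Corollary~\ref{cor selfdual} is needed for a different reason, namely continuity of $f_*$ in the self-duality pairing. Neither point affects the validity of your proof.
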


\begin{proof}
The tensor product of 
$\D(Y)$-modules can be calculated
as the geometric realization of the two-sided bar
construction (\cite[4.3.4]{HA}), the simplicial category with
$k$-simplices
$$
\D(X_1)\ot \D(Y)\ot\cdots \ot \D(Y)\ot \D(X_2),
$$
where the factor $\D(Y)$ appears $k$ times
and the maps are given by the $\D(Y)$-module structures in the usual pattern. 
This is canonically equivalent to the simplicial category with $k$-simplices
$$
\D(X_1 \ti Y\ti \cdots \ti Y \ti X_2)
$$
where the factor $Y$ appears $k$ times
and the maps are given by $!$-pullbacks  in the usual pattern.

This  extends to an augmented simplicial category with augmentation given by the pullback
$$
\xymatrix{
j_{-1}^!:\D(X_1 \times X_2)\ar[r] & \D(X_1 \times_Y X_2)
}
$$
along the relative diagonal 
$$
\xymatrix{
j_{-1}:X_1\ti_Y X_2 \ar[r] & X_1 \ti X_2
}$$ 
The map $j_{-1}$ is a base change of the diagonal map of $Y$, and
since  $Y$ is a separated scheme,  $j_{-1}$ is a closed embedding. 

Similarly, the analogous diagonal maps
$$
\xymatrix{
j_k: X_1 \times Y \times \cdots \times Y \times  X_2 \ar[r] & X_1\times Y\times \cdots \times Y \times Y \times X_2
}
$$
where the factor $Y$ appears $k$ times in the domain and $k+1$ times in the codomain are closed embeddings.

Now let us consider pushforwards along the above closed embeddings and apply Kashiwara's lemma: for a closed embedding $j$
and a $\D$-module $\fM$, the natural adjunction map $\fM\to  j^!j_*\fM$ is an equivalence.
Thus by base change, 
the pushforward 
$$
\xymatrix{
j_{-1*}:\D(X_1 \times_Y   X_2) \ar[r] & \D(X_1 \ti X_2)
}
$$
and its higher analogues 
$$
\xymatrix{
j_{k *}:\D(X_1 \times Y \times \cdots \times Y \times  X_2)\ar[r] & \D(X_1\ti Y\ti \cdots \ti Y \ti Y \ti  X_2)
}$$
provide a lift to 
a split augmented simplicial category.

Finally, by~\cite[Lemma 6.1.3.16]{topos},  in any $\oo$-category, split augmented simplicial diagrams are colimit diagrams.
\end{proof}

\begin{theorem} \label{thm selfdual}
Let $X$ be a stack, $Y$ a scheme, and $f:X\to Y$ a safe morphism
(for example, a relative Deligne-Mumford stack). Then $\D(X)$ is a self-dual module over $\D(Y)$.
\end{theorem}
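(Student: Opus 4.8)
The plan is to realize $\D(X)$ as its own dual inside $\Mod_{\D(Y)}(\St_\CC)$ by writing down explicit evaluation and coevaluation functors, in direct analogy with the absolute Verdier self-duality $\D(X)\simeq\D(X)'$ recalled at the start of this section, and then to verify the triangle identities by a base-change computation. The hypotheses supply exactly the continuity statements one needs: since $X$ has affine diagonal and $Y$ is a separated scheme, the relative diagonal $\Delta:X\to X\ti_Y X$ is affine, hence representable and safe, so $\Delta_*$ is continuous; and $\pi:X\to Y$ is safe by assumption, so $\pi_*$ is continuous. As $\Delta^!$ and $\pi^!$ are always continuous, I would use the relative K\"unneth equivalence $\D(X)\ot_{\D(Y)}\D(X)\simeq\D(X\ti_Y X)$ of Theorem~\ref{relative product} to define continuous functors
$$coev:\D(Y)\xrightarrow{\ \pi^!\ }\D(X)\xrightarrow{\ \Delta_*\ }\D(X\ti_Y X)\simeq\D(X)\ot_{\D(Y)}\D(X),$$
$$ev:\D(X)\ot_{\D(Y)}\D(X)\simeq\D(X\ti_Y X)\xrightarrow{\ \Delta^!\ }\D(X)\xrightarrow{\ \pi_*\ }\D(Y).$$
Both are $\D(Y)$-linear, by base change and the projection formula (available since $\Delta$ and $\pi$ are safe) together with $\varpi\circ\Delta\simeq\pi$ for the projection $\varpi:X\ti_Y X\to Y$. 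Unwinding the universal bilinear map $\D(X)\ti\D(X)\to\D(X)\ot_{\D(Y)}\D(X)$ through the proof of Theorem~\ref{relative product} (whose augmentation is the $!$-pullback along $X\ti_Y X\to X\ti X$), the pairing induced by $ev$ is $(\fM,\fN)\mapsto\pi_*(\fM\ot\fN)$, where $\ot$ is the $!$-tensor product on $\D(X)$; this is manifestly symmetric, so the dual of $\D(X)$ comes out as $\D(X)$ itself with no twist, exactly as in the absolute case (the element $\Delta_*\omega_X\in\D(X\ti_Y X)$ picked out by $coev$ being the relative analogue of the kernel of the identity functor).

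Next I would verify the triangle identity $(\Id_{\D(X)}\ot\,ev)\circ(coev\ot\Id_{\D(X)})\simeq\Id_{\D(X)}$. Iterating Theorem~\ref{relative product} identifies the threefold relative tensor product with $\D(X\ti_Y X\ti_Y X)$, and under this identification the composite becomes $\fM\mapsto r_*\,\delta^!\,a_*(\omega_X\boxtimes_Y\fM)$, where $a:X\ti_Y X\to X\ti_Y X\ti_Y X$ is diagonal into the first two factors and the identity onto the third, $\delta:X\ti_Y X\to X\ti_Y X\ti_Y X$ is the identity onto the first factor and diagonal into the last two, $\boxtimes_Y$ is the relative external product, and $r:X\ti_Y X\to X$ is a projection. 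The heart of the matter is the base change along the Cartesian square formed by $a$ and $\delta$: their fiber product over $X\ti_Y X\ti_Y X$ is $X$ embedded diagonally, so base change gives $\delta^!a_*\simeq\Delta_*\Delta^!$ for the two diagonal maps $X\to X\ti_Y X$. Applying this, together with $\Delta^!(\omega_X\boxtimes_Y\fM)\simeq\fM$ (i.e. $\omega_X$ is the monoidal unit of $\D(X)$) and the composition law $r_*\Delta_*\simeq(r\Delta)_*=(\Id_X)_*=\Id$, collapses the composite to $\fM$. This argument uses only base change, the identification of the unit, and the composition law for pushforwards; in particular it does not invoke Kashiwara's lemma, which is why it survives even though $\Delta$ is merely affine rather than a closed immersion when $X$ is a stack. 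The other triangle identity follows from the same computation, using the symmetry of the construction under interchanging the two factors of $X\ti_Y X$.

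The main obstacle I expect is not the geometric content, which is the short diagram chase above, but the coherence bookkeeping: one must make the two applications of Theorem~\ref{relative product} (for $X\ti_Y X$ and for $X\ti_Y X\ti_Y X$) mutually compatible, so that $coev\ot\Id$ and $\Id\ot\,ev$ are honestly equal to the displayed geometric functors rather than merely agreeing with them on objects, and one must check that the base-change isomorphisms used are the canonical ones coming from the correspondence-functoriality of $X\mapsto\D(X)$ (Remark~\ref{correspondences}). Once this is in place the triangle identities reduce to the manipulations above, and a symmetric-monoidal bookkeeping also shows the resulting duality respects the $\D(Y)$-module structures. It is worth emphasizing where the hypotheses enter: safeness of $f$ is used twice and essentially --- it is what makes $\pi_*$ continuous, hence what makes $ev$ a morphism in $\St_\CC$ at all, and it is what makes the projection formula available for $\D(Y)$-linearity --- while the affine-diagonal hypothesis on $X$ together with separatedness of $Y$ is what makes $\Delta_*$ continuous, so that $coev$ is well defined.
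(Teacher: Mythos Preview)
Your proposal is correct and follows essentially the same route as the paper: define the unit and counit via the correspondence $Y\xleftarrow{\pi}X\xrightarrow{\Delta}X\times_Y X$ (using Theorem~\ref{relative product} to identify the fiber product with the relative tensor), and verify the triangle identity by base change along the Cartesian square formed by the two partial diagonals $X\times_Y X\rightrightarrows X\times_Y X\times_Y X$, collapsing the composite to $\pi_{1*}\Delta_*\Delta^!\pi_2^!\simeq\Id$. The paper's write-up is terser and omits your discussions of $\D(Y)$-linearity and of why $\Delta_*$ is continuous, but the geometric content is identical.
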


\begin{proof} 
 By Theorem \ref{relative product}, we have a canonical equivalence
$$
\D(X)\ot_{\D(Y)} \D(X) \simeq
\D(X\times_Y X).
$$
Thus we can define a unit and counit by the
correspondences 
$$
\xymatrix{
u=\Delta_*f^!: \D(Y)  \ar[r] & \D(X\times_Y X) \simeq \D(X)\ot_{\D(Y)} \D(X)
}
$$
$$
\xymatrix{
c=f_*\Delta^!: \D(X)\ot_{\D(Y)} \D(X)  \simeq \D(X\times_Y X) \ar[r] & \D(Y)
}
$$
where $\Delta:X\to
X\times_Y X$ is the diagonal. Here we assume $f:X\to Y$ is safe to ensure 
the continuity of the functor $f_*$.

We must check that
the following composition is the identity
$$
\xymatrix{ 
\D(X)  \ar[r]^-{u \ot \on{id}} & \D(X)\ot_{\D(Y)} \D(X)\ot_{\D(Y)} \D(X)
\ar[r]^-{\on{id}\ot c}  & \D(X). 
}
$$
Consider the following commutative diagram with Cartesian square:
$$
\xymatrix{
& 
\ar[d]_{\Delta} X \ar[rr]^-{\Delta} 
& &
X\ti_Y X \ar[r]^-{\pi_1} \ar[d]^-{\Id_1\ti \Delta_{23}} 
& X 
\\
X
& 
\ar[l]_-{\pi_2} X\ti_Y X \ar[rr]^-{ \Delta_{12}\ti \Id_3}
& &
X \ti_Y X\ti_Y X
& 
\\
}
$$

Using standard identities for composition and base change, we calculate
\begin{eqnarray*}
(\on{id}\ot c)\circ (u \ot \on{id}) & = & 
\pi_{1*} (\Id_1\ti \Delta_{23})^!(\Delta_{12}\ti \Id_3)_*\pi^!_{2}\\
& \simeq & \pi_{1*} \Delta_*\Delta^!\pi^!_{2}\\
& \simeq & \Id_{\D(X)}.
\end{eqnarray*}
\end{proof}

If $X_1\to Y$ is safe as in the previous theorem, and $X_2\to Y$ is arbitrary,   then the integral transform construction gives linear functors
 $$
 \xymatrix{\D(X_1 \times_Y X_2)\ar[r] & \Fun^L_{\D(Y)}(\D(X_1), D(X_2))
 &
 \fM \ar@{|->}[r] &  \pi_{2*}(\pi_1^!(-) \ot \fM)
 }$$
Its evident compatibility  
with the self-duality of Theorem~\ref{thm selfdual} and the identification
of Theorem~\ref{relative product} immediately implies the following.

\begin{cor}\label{cor selfdual}
Let $X_1, X_2$ be stacks, $Y$ a scheme,  $X_1\to Y$ a safe morphism, and $X_2\to Y$  an arbitrary morphism. 
Then the natural maps are equivalences
$$
\xymatrix{
\D(X_1) \ot_{\D(Y)} \D(X_2) 
\ar[r]^-\sim &  
\D(X_1\ti_Y X_2) 
\ar[r]^-\sim & 
\Fun^L_{\D(Y)}(\D(X_1), \D(X_2))
}$$
factoring the self-duality of $\D(X_1)$ over $\D(Y)$.
\end{cor}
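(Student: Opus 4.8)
The statement is a formal consequence of Theorems~\ref{relative product} and~\ref{thm selfdual}, combined with the standard fact that, in a closed symmetric monoidal $\oo$-category, the internal hom out of a dualizable object is computed by tensoring with its dual (cf.\ \cite{DGcat, BFN}). The plan is as follows. Recall that for a commutative algebra object $A$ of $\St_\CC$, an $A$-module $M$ that is dualizable (in the sense of Definition~\ref{duality for modules}) with dual $M^\vee$, and an arbitrary $A$-module $N$, there is a canonical equivalence
$$
\xymatrix{
M^\vee\ot_A N\ar[r]^-\sim & \Fun^L_A(M,N)
}
$$
natural in $N$, exhibiting $\Fun^L_A(M,N)$ --- the internal hom of the closed symmetric monoidal $\oo$-category $\Mod_A(\St_\CC)$ --- as $M^\vee\ot_A N$; the equivalence is assembled from the evaluation and coevaluation of $M$, and requires no rigidity hypothesis on $A$ (in particular it applies to $A=\D(Y)$, which for $Y$ a scheme is typically not semi-rigid).

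First I would specialize this to $A=\D(Y)$, $M=\D(X_1)$ and $N=\D(X_2)$. Because $f\colon X_1\to Y$ is safe, Theorem~\ref{thm selfdual} shows that $\D(X_1)$ is dualizable over $\D(Y)$ and in fact self-dual, and that --- under the identification $\D(X_1)\ot_{\D(Y)}\D(X_1)\simeq\D(X_1\ti_Y X_1)$ of Theorem~\ref{relative product} --- its coevaluation and evaluation are realized by the correspondences $u=\Delta_*f^!$ and $c=f_*\Delta^!$ associated to the relative diagonal $\Delta\colon X_1\to X_1\ti_Y X_1$. Substituting the self-duality $\D(X_1)\simeq\D(X_1)^\vee$ into the displayed equivalence, and using Theorem~\ref{relative product} once more to rewrite the source as $\D(X_1\ti_Y X_2)$, yields the chain of equivalences
$$
\xymatrix{
\D(X_1)\ot_{\D(Y)}\D(X_2)\ar[r]^-\sim & \D(X_1\ti_Y X_2)\ar[r]^-\sim & \Fun^L_{\D(Y)}(\D(X_1),\D(X_2)).
}
$$
By construction the second arrow is $-\ot_{\D(Y)}\D(X_2)$ applied to the self-duality of $\D(X_1)$, followed by the canonical map above; this is the sense in which the equivalences ``factor the self-duality of $\D(X_1)$ over $\D(Y)$'' --- indeed, for $X_2=X_1$ and $\fM=u(\omega_Y)=\Delta_*\omega_{X_1}$ the associated functor is the identity, recovering the self-duality unit.

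The last point is to identify this composite with the integral transform $\fM\mapsto\pi_{2*}(\pi_1^!(-)\ot\fM)$. Unwinding the canonical map $M^\vee\ot_A N\to\Fun^L_A(M,N)$ with the explicit self-duality data $u=\Delta_*f^!$, $c=f_*\Delta^!$, together with the base-change identifications underlying Theorem~\ref{relative product}, one finds that the functor attached to $\fM\in\D(X_1\ti_Y X_2)$ sends $\cF\in\D(X_1)$ to $f^{(2)}_*(\Delta^{(2)})^!\bigl(q_1^!\cF\ot q^!\fM\bigr)$, where $q_1$ and $q$ are the projections of $X_1\ti_Y X_1\ti_Y X_2$ onto the first factor $X_1$ and onto the last two factors $X_1\ti_Y X_2$, the map $\Delta^{(2)}\colon X_1\ti_Y X_2\to X_1\ti_Y X_1\ti_Y X_2$ doubles the $X_1$-factor, and $f^{(2)}\colon X_1\ti_Y X_2\to X_2$ is the projection. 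Since $q_1\circ\Delta^{(2)}=\pi_1$, $q\circ\Delta^{(2)}=\Id$ and $f^{(2)}=\pi_2$, the standard composition and monoidality identities for $!$-pullback collapse this expression to $\pi_{2*}(\pi_1^!\cF\ot\fM)$ --- exactly the kind of diagram chase carried out for the triangle identity in the proof of Theorem~\ref{thm selfdual} and in the absolute case recalled before the statement. This last, purely diagrammatic, bookkeeping is the only genuine work in the proof, and it is the main (though entirely routine) obstacle; all the substantive content sits in Theorems~\ref{relative product} and~\ref{thm selfdual}.
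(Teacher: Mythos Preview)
Your proposal is correct and follows essentially the same approach as the paper: the corollary is deduced immediately from Theorems~\ref{relative product} and~\ref{thm selfdual} together with the standard identification $\Fun^L_A(M,N)\simeq M^\vee\ot_A N$ for a dualizable $A$-module $M$. The paper simply asserts that the integral transform construction is ``evidently compatible'' with the self-duality data and leaves it at that; your diagram chase unwinding this compatibility to recover the formula $\pi_{2*}(\pi_1^!(-)\ot\fM)$ is exactly the routine verification the paper omits.
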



\ssn{Functoriality over classifying stacks}\label{over stacks}

In this section, we present an equivariant generalization of 
Theorem~\ref{thm selfdual} and its corollary realizing linear functors via 
integrals kernels on the fiber product. Unfortunately, results of this nature do not hold in great generality,  but
we will be content with the case when the base stack $Y$ is the classifying stack $BG$ of an 
affine group scheme $G$. What we specifically use is the  restrictive property that
the diagonal map of such stacks is smooth.


\begin{prop}\label{BG semirigid}
For a  stack $Y$ with smooth diagonal, the monoidal category $\D(Y)$ is semi-rigid with a canonical pivotal structure.
\end{prop}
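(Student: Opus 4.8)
The plan is to verify the three defining conditions of a semi-rigid monoidal category for $\D(Y)$ when $Y$ has smooth diagonal, and then to exhibit the pivotal structure. First I would recall that $\D(Y)$ is compactly generated (by \cite[Theorem 8.1.1]{finiteness}), so it remains to check that a collection of compact generators consists of objects that are both left and right dualizable under the symmetric monoidal structure $\otimes$ on $\D(Y)$. Since the monoidal structure on $\D(Y)$ is symmetric, left and right dualizability coincide, and a pivotal structure will come for free from the symmetry once dualizability is established; this simultaneously handles the last clause of the proposition. So the heart of the matter is: every compact (equivalently, coherent) $\D$-module on $Y$ is dualizable under $\otimes$.

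The key step is to unwind what dualizability means in terms of the geometry. The monoidal unit of $\D(Y)$ is $\omega_Y$, and the tensor product is $\Delta^!\circ\boxtimes$, i.e. $\fM\otimes\fN=\Delta^!(\fM\boxtimes\fN)$ where $\Delta:Y\to Y\times Y$ is the diagonal. The smoothness hypothesis on $\Delta$ is exactly what is needed: when $\Delta$ is smooth, $\Delta^!\simeq \Delta^*[2d]$ for $d=\dim Y$ (relative dimension of $\Delta$), so $\Delta^!$ is a left adjoint as well as continuous, and it is symmetric monoidal. I expect the cleanest route to dualizability is to use Verdier duality: for $\fM\in\D(Y)_{cpt}$ one takes $\fM^\vee := \DD_Y\fM$ and produces the evaluation map from the functorial identification \eqref{Verdierstacks}, $\Hom_{\D(Y)}(\DD_Y\fM,\fN)\simeq\Hom_{\D(Y)}(\CC_Y,\fM\otimes\fN)$, together with the map $\CC_Y\to\omega_Y$ (or rather, the comparison of $\CC_Y$ and $\omega_Y$ on the smooth locus) — but here one must be careful, because $\CC_Y$ need not be compact and \eqref{Verdierstacks} need not extend to all objects. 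This is the point where the smoothness of $\Delta$ does real work: it lets one transport the self-duality statement of schemes (where $\CC_X$ and $\omega_X$ are interchanged by $\DD_X$ up to the shift $[2\dim X]$) along a smooth cover, and then descend. Concretely, pick a smooth atlas $U\to Y$; the compact generators of $\D(Y)$ may be taken of the form $p_!(\fM_U)$ or described via descent, and dualizability can be checked after smooth pullback, where it reduces to the already-recorded fact that on a scheme $U$ every coherent $\D$-module is dualizable (the counit being $\Gamma_{dR}$, i.e. the scheme-level self-duality). The smoothness of $\Delta$ guarantees the relevant base-change and projection-formula identities hold continuously, so the duality data glue.

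The pivotal structure is then the identification $LL\simeq\Id$, which by the discussion preceding \cite[the definition of pivotal structure]{} is equivalent to a lift of $R:A\to A'$ to a map of $A$-bimodules; for a symmetric monoidal category such as $\D(Y)$ this is canonical, coming from the symmetry $\D(Y)\simeq\D(Y)^{op}$ (which exchanges left and right duals), exactly as in the displayed example that $A^e=A\otimes A^{op}$ is canonically pivotal. Concretely, Verdier duality $\DD_Y$ provides the required monoidal involution identifying duals with themselves.

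The main obstacle I anticipate is the bookkeeping around the failure of $\CC_Y$ to be compact: one cannot naively invoke \eqref{Verdierstacks} to build the evaluation $\fM\otimes\fM^\vee\to\omega_Y$, because the target one naturally lands in is $\CC_Y$, not $\omega_Y$, and these differ on a non-safe stack precisely by the non-compactness phenomenon described in Section~3. The resolution is to observe that dualizability only involves compact objects $\fM$ and the unit $\omega_Y$, and to produce the evaluation/coevaluation directly by smooth descent from a scheme atlas — where $\omega_U\simeq\CC_U[2\dim U]$ and everything is already known — rather than by manipulating $\CC_Y$ on $Y$ itself. Making this descent argument precise (checking the duality data are $\D(Y)$-linear and compatible along the Čech nerve of $U\to Y$, using that $\Delta_Y$ smooth forces all the structure maps to be continuous and satisfy base change) is the step requiring the most care, but it is routine given the smoothness hypothesis and the results already assembled in Section~3.
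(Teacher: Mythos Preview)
Your proposed descent-to-an-atlas argument has a genuine error. You claim that ``on a scheme $U$ every coherent $\D$-module is dualizable,'' but this is false, and the paper explicitly says so in Section~\ref{Tannakian failure}: for a scheme, ``compact objects in $\D(Y)$ \ldots\ are coherent $\D$-modules, but only flat vector bundles are dualizable.'' The point is that a scheme of positive dimension does \emph{not} have smooth diagonal, so the hypothesis of the proposition fails there; you cannot reduce to a scheme atlas. (You also conflate self-duality of the \emph{category} $\D(U)$ in $\St_\CC$, which does hold via $\Gamma_{dR}$, with monoidal dualizability of individual \emph{objects} inside $\D(U)$, which does not.)

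The irony is that the direct route you initially sketch and then abandon is exactly what works, and is what the paper does. Your worry about $\CC_Y$ is misplaced: the characterization~\eqref{Verdierstacks} is asserted for \emph{compact} $\fM,\fN$, and that is all one needs. One first reduces the adjunction identity $\Hom_\cH(\fL\otimes\fM,\fN)\simeq\Hom_\cH(\fL,\fN\otimes\fM^\vee)$ to the case where $\fL,\fN$ are compact (then extend by continuity), and then uses~\eqref{Verdierstacks} on both sides. The smoothness of $\Delta$ enters via the single computation
\[
\DD_Y(\fL\otimes\fM)=\DD_Y\Delta^!(\fL\boxtimes\fM)\simeq \Delta^*(\DD_Y\fL\boxtimes\DD_Y\fM)\simeq \DD_Y(\fL)\otimes\DD_Y(\fM)[2\dim Y],
\]
which uses $\Delta^*\simeq\Delta^![2\dim Y]$. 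This also shows your candidate dual is off by a shift: the monoidal dual of a compact $\fM$ is $\DD_Y(\fM)[2\dim Y]$, not $\DD_Y(\fM)$. Your treatment of the pivotal structure (it comes from the symmetry) is correct.
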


\begin{proof}
Since $\D(Y)$ is compactly generated and symmetric monoidal, 
it suffices to check that any compact object is dualizable. 

For any compact object $\fM\in \D(Y)$, we will show that the monoidal dual is the shifted Verdier dual $\DD_Y(\fM)[2\dim Y] \in \D(Y)$. 

Following the discussion of Section~\ref{duals}, it suffices to establish for any  compact object $\fM\in \D(Y)$, 
and arbitrary objects $\fL, \fN \in \D(Y)$, that there is a functorial equivalence
\begin{equation}\label{dual eq}
\xymatrix{
\Hom_{\D(Y)}(\fL \otimes \fM, \fN)
 \simeq \Hom_{\D(Y)}(\fL, \fN\otimes \DD_Y(\fM)[2\dim Y] )
}
\end{equation}

Observe that it suffices to establish the equivalence for $\fL, \fN \in \cH$ compact objects. For arbitrary objects $\fL, \fN\in \cH$, it then follows by continuity by taking colimits first in $\fN$ then in $\fL$.

Using that the diagonal $\Delta:Y\to Y\times Y$ is smooth of relative dimension $-\dim Y$, note the following functorial equivalence
\begin{eqnarray*}
\DD_Y(\fL \otimes \fM) & = & \DD_Y\Delta^!(\fL \boxtimes \fM)\\
&  \simeq & \Delta^*(\DD_Y(\fL) \boxtimes \DD_Y(\fM))\\
& \simeq &  \Delta^!(\DD_Y(\fL) \boxtimes \DD_Y(\fM))[2\dim Y] \\
& \simeq &  \DD_Y(\fL) \otimes \DD_Y(\fM)[2\dim Y]
\end{eqnarray*}

Now returning to the sought after equivalence~\eqref{dual eq}, we find
functorial equivalences
\begin{eqnarray*}
\Hom_{\D(Y)}(\fL \otimes \fM, \fN) &\simeq &\Hom_{\D(Y)}(\CC_Y, \DD_Y(\fL \otimes \fM) \otimes \fN) \\ 
&\simeq&  \Hom_{\D(Y)}(\CC_Y, (\DD_Y(\fL) \otimes \DD_Y(\fM) [2\dim Y]) \otimes \fN)\\ 
&\simeq&\Hom_{\D(Y)}(\fL,  \DD_Y(\fM)[2\dim Y]\otimes  \fN) 
\end{eqnarray*}
\end{proof}

\begin{remark}
Implicit in the  proposition is the Poincar\'e duality  identity $\DD_Y(\omega_Y)[2\dim Y] \simeq \omega_Y$ since the dual of the monoidal unit must be the monoidal unit. \end{remark}

\begin{cor}\label{selfdual over stack}
Let $Y$ be a  stack with smooth diagonal, $X$ a stack, and  $X\to Y$ an arbitrary morphism.
Then $\D(X)$ is
self-dual as a $\D(Y)$-module. 
\end{cor}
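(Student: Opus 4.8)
The plan is to extract the self-duality from the abstract structure of $\D(Y)$ as a semi-rigid monoidal category, rather than constructing unit and counit maps explicitly as in the proof of Theorem~\ref{thm selfdual}; the latter approach is unavailable here, since the identification $\D(X)\ot_{\D(Y)}\D(X)\simeq\D(X\times_Y X)$ underlying it fails when $Y$ is a genuine stack.

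First I would record that $\D(X)$ is dualizable as an object of $\St_\CC$: by \cite[Theorem~8.1.1]{finiteness} it is compactly generated, so Proposition~\ref{jacob duality} applies, and in fact Verdier duality supplies a canonical self-duality $\D(X)'\simeq\D(X)$ in $\St_\CC$. Second, I would invoke Proposition~\ref{BG semirigid}: since $Y$ has smooth diagonal, $\D(Y)$ is a semi-rigid monoidal category carrying a canonical pivotal structure, so the double-dual monoidal automorphism $RR$ of $\D(Y)$ is monoidally trivialized. The module $\D(X)$ over $\D(Y)$ arises through the symmetric monoidal functor $f^!\colon\D(Y)\to\D(X)$; since its underlying category is $\St_\CC$-dualizable, Proposition~\ref{bimodule dual} shows that $\D(X)$ is $\D(Y)$-dualizable, with $\D(Y)$-dual the twisted naive dual $\D(X)'_{RR}$. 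Applying the trivialization of $RR$ and then Verdier duality identifies this dual with $\D(X)'_{RR}\simeq\D(X)'\simeq\D(X)$, which is the asserted self-duality; and because the argument is formal once Proposition~\ref{BG semirigid} is in place, no safety hypothesis on $X\to Y$ is required, in contrast to Theorem~\ref{thm selfdual}.

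The step I expect to need the most care is the final identification: one must check that the equivalence of $\D(Y)$-modules $\D(X)^{\vee_{\D(Y)}}\simeq\D(X)'_{RR}$ produced by Proposition~\ref{bimodule dual} is compatible, as a $\D(Y)$-module equivalence, with the Verdier self-duality $\D(X)'\simeq\D(X)$, i.e.\ that Verdier duality on $\D(X)$ is $\D(Y)$-linear. I expect this to reduce to the fact that $f^!$ is symmetric monoidal, together with the description of the Verdier counit by $\fM\ot\fN\mapsto\Gamma_{dR}(\fM\ot\fN)$ and the projection formula along $X\to Y$; this is the single place where the concrete geometry of $\D$-modules re-enters what is otherwise a purely formal consequence of semi-rigidity.
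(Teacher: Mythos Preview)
Your approach is exactly the paper's own proof, which is a single sentence: ``As a plain category, $\D(X)$ is self-dual, so the previous proposition and Proposition~\ref{bimodule dual} imply the corollary.'' The ingredients you invoke---compact generation of $\D(X)$, Proposition~\ref{BG semirigid} for semi-rigidity and pivotality of $\D(Y)$, and Proposition~\ref{bimodule dual} to pass from $\St_\CC$-dualizability to $\D(Y)$-dualizability with dual $\D(X)'_{RR}$---are precisely what the paper uses, in the same order.

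Your final paragraph is more careful than the paper: the one-line proof does not explicitly verify that the Verdier self-duality $\D(X)'\simeq\D(X)$ is $\D(Y)$-linear. Your proposed check is the right one. Concretely, under the Verdier pairing $\langle\fM,\fN\rangle=\Gamma_{dR}(\fM\otimes\fN)$, the transpose of the action $a_\fK:\fM\mapsto f^!\fK\otimes\fM$ satisfies $\langle(a_\fK)^t\fL,\fM\rangle=\Gamma_{dR}(\fL\otimes f^!\fK\otimes\fM)$, so by symmetry of $\otimes$ one gets $(a_\fK)^t=a_\fK$; combined with the symmetric monoidal structure on $\D(Y)$ (which identifies left and right modules) and the pivotal trivialization of $RR$, this gives the required $\D(Y)$-linearity. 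So your flagged step is genuine but resolves exactly as you anticipate.
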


\begin{proof}
As a plain category, $\D(X)$ is self-dual, so the previous proposition and Proposition~\ref{bimodule dual} imply the corollary.
\end{proof}

If $X_1, X_2$ are stacks, $X_1\to Y $ is safe and $X_2\to Y$ is arbitrary, then 
there is a canonical map induced by $!$-pullbacks
$$
 \xymatrix{
 \Psi: \D(X_1)\otimes_{\D(Y)} D(X_2) \ar[r] &  \D(X_1 \times_Y X_2)
  }$$
and the integral transform construction gives linear functors
 $$
 \xymatrix{
 \Phi:\D(X_1 \times_Y X_2)\ar[r] & \Fun^L_{\D(Y)}(\D(X_1), D(X_2))
 &
 \Phi_\fM = 
 \pi_{2*}(\pi_1^!(-) \ot \fM)
 }$$
Tracing  back through the constructions and applying standard identities, the functors $\Psi,\Phi$ evidently provide a factorization
$$\xymatrix{&\D(X_1 \times_Y X_2)\ar[dr]^-{\Phi}&\\
\D(X_1)\ot_{\D(Y)} \D(X_2) \ar[rr]^-{\sim} \ar[ur]^-{\Psi}  &&\Fun^L_{\D(Y)}(\D(X_1), \D(X_2))}
$$
of the equivalence provided by Corollary~\ref{selfdual over stack}. In general neither $\Psi,\Phi$  is itself an equivalence, but we have the following.

\begin{prop}\label{prop int trans adjoint}
Let $Y$ be a smooth stack with smooth diagonal, $X_1, X_2$ stacks, $X_1\to Y$ a safe morphism, and $X_2\to Y$  an arbitrary morphism. 

Then the integral transform $\Phi$ admits a fully faithful left adjoint $\Phi^L$.
\end{prop}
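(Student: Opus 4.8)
The plan is to take for $\Phi^L$ the comparison functor $\Psi$ itself, transported along the self-duality of $\D(X_1)$, and to prove the proposition by identifying $\Phi$ with the right adjoint of $\Psi$.

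First I would use the self-duality of $\D(X_1)$ over $\D(Y)$ (Corollary~\ref{selfdual over stack}) to identify $\Fun^L_{\D(Y)}(\D(X_1),\D(X_2))$ with the relative tensor product $\D(X_1)\ot_{\D(Y)}\D(X_2)$ via an equivalence $E$; under this identification the factorization already in hand reads $\Phi\circ\Psi\simeq E$, so $\Psi$ is a section of $\Phi$. I would also record that $\Phi$ is continuous: $\pi_2\colon X_1\times_Y X_2\to X_2$ is a base change of the safe morphism $X_1\to Y$, hence safe, so $\pi_{2*}$ and therefore $\Phi$ preserve colimits. Because $\Psi$ preserves colimits it has a right adjoint $\Psi^R$, and the proposition then reduces to the single claim
$$E\circ\Psi^R\simeq\Phi:$$
granting this, $\Phi^L:=\Psi\circ E^{-1}$ is left adjoint to $\Phi\simeq E\circ\Psi^R$, and it is fully faithful because the unit $\Id\to\Psi^R\circ\Psi$ is identified with $E^{-1}\circ(\Phi\circ\Psi)\simeq\Id$. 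The example $X_1=X_2=\mathrm{pt}$, $Y=BG$, where $\D(X_1\times_Y X_2)=\D(G)$ and the tensor product is the colimit-closed subcategory $\langle\cO_G\rangle$, is a useful sanity check: there $\Psi$ is the inclusion and $\Phi$ its coreflector.

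To establish $E\circ\Psi^R\simeq\Phi$ I would compute $\Psi^R$ from the bar resolution, paralleling the proof of Theorem~\ref{thm selfdual}. Realize $\D(X_1)\ot_{\D(Y)}\D(X_2)$ as the geometric realization of the two-sided bar construction with $k$-th term $\D(X_1\times Y^{\times k}\times X_2)$, whose face maps, and whose augmentation $j_{-1}^!\colon\D(X_1\times X_2)\to\D(X_1\times_Y X_2)$ inducing $\Psi$, are $!$-pullbacks along base changes of the diagonal $\Delta_Y$ of $Y$. The key geometric input is that $\Delta_Y$ is safe (its fibres being smooth schemes, hence with trivial automorphism groups) and that $\D(Y)$ is semi-rigid by Proposition~\ref{BG semirigid}; consequently the $*$-pushforward right adjoints of all these maps are continuous (for $\Delta_Y$ itself this is Lemma~\ref{quasiproper action}), so by the reversal-of-diagrams mechanism of Section~\ref{reversing diagrams} (cf. Corollary~\ref{reversing arrows}) the defining colimit is equally the limit over $\Delta$ --- or rather over the underlying semi-cosimplicial object, whose structure maps are these continuous pushforwards --- of the cobar diagram obtained by passing to right adjoints. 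Hence $\Psi^R$ is computed termwise by the $*$-pushforwards $j_{k\,*}$ (up to shift), and reassembling the limit by base change and the projection formula collapses it to the single integral transform $\fK\mapsto\pi_{2*}(\pi_1^!(-)\ot\fK)$, i.e. to $E\circ\Psi^R\simeq\Phi$.

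The step I expect to be the main obstacle is exactly this last collapse: in the scheme case of Theorem~\ref{relative product} the augmented simplicial object splits by Kashiwara's lemma, so the totalization trivially reduces to its lowest term, but over $Y$ the map $\Delta_Y$ is no longer a closed embedding and that splitting is unavailable; one must instead extract the two-term expression directly from the semi-rigidity of $\D(Y)$ and the continuity of the pushforwards, which is precisely where the hypotheses ``$Y$ has smooth diagonal'' and ``$X_1\to Y$ safe'' enter. A parallel route would be to run the argument after smooth descent along an atlas $U\to Y$, where over each scheme $U_k$ in the Cech nerve the functors $\Psi_k,\Phi_k$ are equivalences by Corollary~\ref{cor selfdual}; the content of the proposition then becomes that the comparison map $\D(X_1)\ot_{\D(Y)}\D(X_2)\to\lim_k \D(X_{1,k}\times_{U_k}X_{2,k})$ --- which is not an equivalence, since relative tensor product does not commute with the descent limit --- is nevertheless fully faithful with the stated coreflector, and controlling that interaction again rests on the semi-rigidity of $\D(Y)$.
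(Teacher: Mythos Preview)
Your approach and the paper's share the same cosimplicial picture: both realize $\Fun^L_{\D(Y)}(\D(X_1),\D(X_2))$ as the totalization of the cosimplicial diagram $\D(X_1\times Y^{\times\bullet}\times X_2)$ with $*$-pushforward coface maps, and both recognize $\Phi$ as the canonical map from the augmentation $\D(X_1\times_Y X_2)$ via $j_{-1*}$. Your component-wise computation of $\Psi^R$ is correct, and once $T$ and $F$ are identified with the same totalization the equation $E\Psi^R\simeq\Phi$ is essentially tautological. So the ``collapse'' you flag as the main obstacle is not where the difficulty lies.

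The genuine gap is in your deduction of full faithfulness. You argue that the unit $\Id_T\to\Psi^R\Psi$ ``is identified with $E^{-1}(\Phi\Psi)\simeq\Id$'', but this only shows the \emph{target} of the unit is equivalent to $\Id_T$; it does not show the unit \emph{map} is an equivalence. Having an adjunction $(\Psi,\Psi^R)$ together with an equivalence $\Psi^R\Psi\simeq\Id$ of functors does not force the unit to be invertible. The paper closes exactly this gap by a different route: having exhibited $\Phi$ as the augmentation-to-totalization map, it checks that the augmented cosimplicial object is \emph{left adjointable} in the sense of \cite[Corollary~6.2.4.3]{HA} (the required Beck--Chevalley squares commute by base change for $\D$-modules, since the coface maps are pushforwards along base changes of the smooth map $\Delta_Y$ and their left adjoints are the corresponding $!$-pullbacks), and then invokes that result, which directly asserts that the map to the totalization admits a fully faithful left adjoint. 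Your sketch neither verifies this adjointability hypothesis nor cites the Lurie result, and without one of these the argument does not close.
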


\begin{proof}
Tracing back through the algebraic arguments leading to Proposition~\ref{bimodule dual}, 
one sees that the  functor category $\Fun^L_{\D(Y)}(\D(X_1), \D(X_2))$ can be calculated as the limit of the cosimplicial category
with $k$-cosimplices 
$$
\D(X_1 \ti Y\ti \cdots \ti Y \ti X_2)
$$
where the factor $Y$ appears $k$ times
and the maps are given by pushforwards  in the usual pattern.
This naturally extends to an augmented cosimplicial category with augmentation given by the pushforward
$$
\xymatrix{
j_{-1*}: \D(X_1 \times_Y X_2) \ar[r] & \D(X_1 \times X_2)
}
$$
along the relative diagonal 
$$
\xymatrix{
j_{-1}:X_1\ti_Y X_2 \ar[r] & X_1 \ti X_2
}$$ 

By base change, this augmented cosimplicial category is left adjointable in the sense of
\cite[Corollary 6.2.4.3]{HA}, thus  the integral transform construction
$$
\xymatrix{
\Phi:\D(X_1\times_Y X_2) \ar[r] & \Fun_{\D(Y)}(\D(X_1), \D(X_2))
}$$
admits a  fully faithful left adjoint.

\end{proof}






\section{Hecke categories and character sheaves}\label{character section}

In this section, we study the homotopical algebra of Hecke categories
arising in representation theory. 
Our focus is the  Hecke
category  of $\D$-modules on the double quotient stack $B\bs G/B$
of a reductive group $G$ by a Borel subgroup $B\subset G$,
and its unipotent monodromic variant.
Our aim is to construct a topological field theory from these Hecke categories, 
and in particular to relate their traces and centers with character sheaves.

If we introduce the classifying stacks $X=BB$, $Y=BG$,
then the double quotient stack can be realized as the fiber product
$$B\bs G/B\simeq \XYX$$  
From this point of view, the
special technical features of this setup are the following:
\begin{enumerate}
\item The morphism $p:X\to Y$ is proper.
\item The diagonal $\delta:X\to X\ti X$ is smooth.
\end{enumerate}
The second property is especially restrictive though satisfied by
classifying spaces of smooth group schemes.
The above conditions will imply that we have a sufficient arsenal of duality and
adjunctions.

\subsection{Hecke categories}\label{convolution algebras}
We establish here that 
Hecke categories 
are semi-rigid categories with a canonical Calabi-Yau structure, and thus two-dualizable monoidal categories defining oriented
topological field theories.


To begin, let $p:X\to Y$ be a morphism of stacks. Introduce the Hecke category $ \cH=\D(X\times_Y X)$. 
Consider the
convolution diagram
$$
\label{convolution diagram}
\xymatrix{
& \ar[dl]_-{p_{12}}\ar[dr]^-{p_{23}} \ar[d]_{p_{13}}X \times_Y X \times_Y X & \\
\XYX & \XYX & \XYX }
$$
Equip $\cH=\D(X\times_Y X)$ with the monoidal product defined by
convolution
$$
\xymatrix{
\star:\cH \otimes \cH \ar[r] & \cH
}$$
$$
\xymatrix{
\fM\star \fN = p_{13*}(p_{12}^!(\fM) \ot p_{23}^!(\fN))
\simeq p_{13*}(p_{12} \ti p_{23})^!(\fM \boxtimes \fN)
}$$


Similar diagrams provide the usual associativity compatibilities of an algebra object.
The monoidal unit $1_\cH$  is given by the pushforward $u_*\omega_X$ along the map
$u:X\to \XYX$ induced by the diagonal $\delta:X \to X\ti X$.

Recall that if $p:X\to Y$ is safe, then the integral transform construction gives a monoidal  functor
$$
\xymatrix{
\cH\ar[r] &  \Fun_{\D(Y)}(\D(X), \D(X))
&
\fM \ar@{|->}[r] & p_{2*}(p_1^!(-) \ot \fM)
}$$

\begin{remark} 
More formally, the monoidal structure on $\D(X\times_Y X)$ can be defined as follows. 
The stack $\XYX$ can be identified with endomorphisms of $X$ in the $\infty$-category
of stacks over $Y$ with morphisms given by correspondences. The assignment
of categories of $\D$-modules extends to a symmetric monoidal functor on this $\oo$-category (see Remark~\ref{correspondences}),
hence induces an associative algebra structure on $\D(\XYX)$.
\end{remark}

It is important to note that all of the maps involved in the monoidal structure of $\cH$ are obtained by base change from either 
the diagonal map $\delta:X\to X \ti X$ or the projection $p:X\to Y$. Moreover, the monoidal structure only uses pullbacks along maps
of the first type and pushforwards along maps of the second type. Thus if we assume $\delta$  is smooth
and $p$ is proper,  the functors $\delta^!$ and $p_*$, and hence the monoidal product on $\cH$, will be quasi-proper. 
In addition, $\D(X)$ is semi-rigid under tensor product,
by Proposition~\ref{BG semirigid}.

\begin{theorem}\label{perfect Hecke}
Suppose $X$ is a  stack with smooth diagonal, $Y$ is a stack, and $X\to Y$ is a proper representable map.

Then the Hecke category  $\cH=\D(\XYX)$
 is semi-rigid (hence 2-dualizable).

\end{theorem}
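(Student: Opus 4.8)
The plan is to verify directly the two defining conditions of semi-rigidity for $\cH = \D(\XYX)$: compact generation, and left/right dualizability of every compact object under the convolution product $\star$; the last clause (``hence 2-dualizable'') is then immediate from Theorem~\ref{perfect dualizable}. Compact generation presents no difficulty: writing $Z = \XYX$, which is a stack because $p\colon X\to Y$ is representable, \cite[Theorem 8.1.1]{finiteness} gives that $\D(Z)$ is compactly generated with compact objects precisely the coherent $\D$-modules. So the whole content is to produce, for each coherent $\fM\in\D(Z)$, duals ${}^\vee\fM$ and $\fM^\vee$ together with the evaluation/coevaluation morphisms and triangle identities of Section~\ref{duals}.

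I would begin by isolating the structural fact already flagged before the statement: in the convolution diagram every arrow is obtained by base change from the diagonal $\delta\colon X\to X\times X$ (smooth, by hypothesis) or from $p$ (proper, and representable hence safe). Hence $\star$ is assembled entirely out of the quasi-proper functors $\delta^!\simeq\delta^*[\cdot]$, $p_*$, and external tensor; in particular $\star$ itself is quasi-proper, so for coherent $\fM$ the endofunctors $\fM\star(-)$ and $(-)\star\fM$ of $\cH$ preserve coherence, and we have at hand the smooth adjunction $(\delta^*,\delta_*)$, the proper adjunction $(p_*,p^!)$, base change, and the projection formula. The candidate dual is the ``transposed Verdier kernel'': with $\sigma\colon Z\to Z$ the involution exchanging the two copies of $X$, set $\fM^\vee := \sigma^!\DD_Z(\fM)$ up to a cohomological shift determined by $\dim X$ (arising from $\delta^!\simeq\delta^*[\cdot]$), and ${}^\vee\fM$ symmetrically. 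I would construct $ev\colon\fM\star\fM^\vee\to 1_\cH = u_*\omega_X$ and $coev\colon 1_\cH\to\fM^\vee\star\fM$ by running the $\D$-module formalism of Section~\ref{D modules} around the threefold fiber product $X\times_Y X\times_Y X$: the proper adjunction for $p$ supplies a unit $\mathrm{id}\to p^!p_*$, the smooth structure of $\delta$ (and the ensuing behaviour of $Z\to X\times X$) produces the collapse onto the unit kernel, and equation~\eqref{Verdierstacks} describes $\DD_Z$ on compacts. The triangle identities are then a lengthy but purely formal chase through the composition, base-change, projection-formula and adjunction identities recalled in that section.

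The part I expect to be the real obstacle is bookkeeping rather than ideas: pinning down the exact cohomological shift and the exact role of the twist $\sigma$, and---most delicately---checking that $ev$ and $coev$ genuinely issue from and land in the correct monoidal unit $1_\cH = u_*\omega_X$, which is the $\omega$-pushforward of the diagonal and not the ``structure sheaf of the diagonal'' that a naive integral-kernel heuristic would suggest. It is also worth noting that both hypotheses are essential and enter in complementary ways: smoothness of $\delta$ is what makes $\delta^!$ a quasi-proper functor admitting a left adjoint (and is already what forces $\D(X)$ to be semi-rigid for $\otimes$, Proposition~\ref{BG semirigid}), while properness of $p$ is exactly what provides the $(p_*,p^!)$ adjunction used to build the evaluation map; relaxing either one destroys the duality data. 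An alternative route would try to identify $\cH$ with a category of $\D(Y)$-linear functors and inherit dualizability from $\D(X)$ as in Theorem~\ref{relative product} and Corollary~\ref{cor selfdual}, but such an identification is unavailable over a general stack $Y$, which is why I would proceed by the direct construction above.
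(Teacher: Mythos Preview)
Your candidate dual is exactly the paper's: with $\sigma$ the swap and $\DD$ Verdier duality, the paper sets $\iota(\fM)=\DD_{\XYX}(\sigma(\fM))[2\dim X]$ and shows it is simultaneously the left and right monoidal dual of a compact $\fM$. So the core identification is right, and your reading of how the two hypotheses enter (smooth $\delta$ for pullback, proper $p$ for pushforward and adjunction) matches the paper's. One small slip: $\D(Z)$ is compactly generated with compact objects \emph{contained in} the coherent ones, not equal to them in general; but only compact generation is needed, so this does not affect the argument.

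Where your route diverges is in \emph{how} dualizability is verified. You propose to build $ev$ and $coev$ explicitly and then check the triangle identities---and you correctly flag that the delicate part will be the bookkeeping around the unit $u_*\omega_X$ and the shift. The paper instead uses the adjoint-functor characterization from Section~\ref{duals}: it suffices to produce, for compact $\fM$ and arbitrary (then compact) $\fL,\fN$, a functorial equivalence
\[
\Hom_\cH(\fL\star\fM,\fN)\ \simeq\ \Hom_\cH(\fL,\fN\star\iota(\fM)).
\]
After using properness of $p_{13}$ (so $p_{13*}$ commutes with $\DD$) and smoothness of $p_{12}\times p_{23}$ (so $\DD$ commutes with convolution up to $[2\dim X]$), this reduces to a single cyclic identity
\[
\Gamma_{\XYX}\bigl((\fL\star\fM)\otimes\fN\bigr)\ \simeq\ \Gamma_{\XYX}\bigl(\fL\otimes(\fN\star\sigma(\fM))\bigr),
\]
which is then established by exhibiting a rotational automorphism $r$ of $X\times_Y X\times_Y X$ intertwining the two sides. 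This buys exactly what you were worried about: no explicit unit/counit to build, no triangle identities to chase, and the shift and the $\sigma$-twist fall out mechanically from the $\DD$-compatibility calculation. Your direct construction would also work, but the paper's Hom-level argument is shorter and sidesteps the bookkeeping you anticipated as the main obstacle.
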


\begin{proof}
To prove the theorem, let us first establish a candidate for the dual of a compact object.

First, recall that Verdier duality  $\DD_{\XYX}$
provides a self-duality
$$
\xymatrix{
\cH=\D(\XYX) \simeq \D(\XYX)' = \cH'
}
$$
By definition, the compact objects of $\cH$ and $\cH'$ form opposite categories to each other.

Next consider the swap involution
$$
\xymatrix{
\sigma: \XYX \ar[r]^-\sim & \XYX
& \sigma (x_1, y_{12}, x_2) = (x_2, y_{12}^{-1}, x_1)
}
$$
where $y^{-1}_{12}$ is the path $y_{12}$ traced in the opposite direction.
The induced equivalence on $\D$-modules
intertwines the algebra structure and the opposite algebra structure
$$
\xymatrix{
\sigma:\cH \ar[r]^-\sim & \cH^{op}.
}
$$

Finally, introduce the composite of the swap involution, Verdier duality and shift 
$$
\xymatrix{
\iota : \cH\ar[r]^-\sim & \cH'
&
\iota(\fM) = \DD_{\XYX}(\sigma(\fM)) [2\dim X]
}
$$

We will show that any compact object $\fM\in \cH$ is left and right
dualizable, with both duals identified with $\iota(\fM) \in \cH$,
thereby establishing that $\cH$ is semi-rigid.
Following the discussion of Section~\ref{duals}, it suffices to establish for a  compact object $\fM\in \cH$, 
and arbitrary objects $\fL, \fN \in \cH$, that there are functorial equivalences
$$
\xymatrix{
\Hom_{\cH}(\fL \star \fM, \fN)
 \simeq \Hom_{\cH}(\fL, \fN\star \iota(\fM))
& 
\Hom_{\cH}(\fM\star \fL , \fN)
\simeq   \Hom_{\cH}(\fL, \iota(\fM) \star\fN )
}
$$

We will give a proof of the first. The second is similar or follows by applying $\sigma$ to the first and renaming objects.

Observe that it suffices to establish the equivalence for $\fL, \fN \in \cH$ compact objects. For arbitrary objects $\fL, \fN\in \cH$, it then follows by continuity by taking colimits first in $\fN$ then in $\fL$.

For $\fL, \fM, \fN \in \cH$ compact objects, we will establish the equivalent identity
$$
\xymatrix{
\Hom_{\cH}(\CC_{\XYX}, \DD_{\XYX}(\fL \star \fM)\otimes \fN)
 \simeq \Hom_{\cH}(\CC_{\XYX}, \DD_{\XYX}(\fL)\otimes (\fN\star \iota(\fM)))
}
$$

Since $p_{13}$ is a base change of $p$, it is  proper and hence $p_{13*}$ commutes with Verdier duality.
Since $p_{12}\times p_{23}$ is a base change of $\delta$, it is smooth of relative dimension $-\dim X$
and hence $(p_{12} \times p_{13})^* \simeq (p_{12} \times p_{13})^![2\dim X]$.
Hence we have the identity
\begin{eqnarray*}
 \DD_{\XYX}(\fL \star \fM)
&\simeq  &  \DD_{\XYX}(p_{13*}((p_{12} \times p_{13})^!(\fL \boxtimes  \fM)))\\
&\simeq  &  p_{13*}\DD_{\XYX}(p_{12} \times p_{13})^!(\fL \boxtimes  \fM)\\
&\simeq  &  p_{13*}(p_{12} \times p_{13})^*(\DD_{\XYX}(\fL) \boxtimes  \DD_{\XYX}(\fM))\\
&\simeq  &  p_{13*}(p_{12} \times p_{13})^!(\DD_{\XYX}(\fL) \boxtimes  \DD_{\XYX}(\fM)) [2\dim X]\\
&\simeq  &  \DD_{\XYX}(\fL) \star  \DD_{\XYX}(\fM) [2\dim X]\\
\end{eqnarray*}

Substituting this into the above sought after identity and renaming objects, 
we seek to establish the following general identity
\begin{equation}\label{general eq}
\Gamma_{\XYX}((\fL \star \fM)\otimes \fN)
  \simeq  \Gamma_{\XYX}(\fL\otimes (\fN\star \sigma(\fM)))
\end{equation}

Let us first modestly reformulate how to calculate each side, in particular the quantities obtained before taking global sections. For the left hand side, consider the commutative diagram whose right hand square is Cartesian
$$
\xymatrix{
 && 
\ar[d]^-{\Id \times p_{13}} \ar[dll]_-{\hspace{-3em} p_{12} \times p_{23} \times p_{13}}  (X\times_Y X\times_Y X)  \ar[r]^-{p_{13}} & \XYX\ar[d]^-\Delta\\
(\XYX)^3 && \ar[ll]^-{p_{12} \times p_{23} \times p_{45}} (X\times_Y X\times_Y X) \times (\XYX) \ar[r]_-{p_{13} \times p_{45}} & (\XYX)^2 
}
$$
By a composition identity in the triangle and base change in the right hand square, there is a functorial equivalence 
$$
(\fL \star \fM)\otimes \fN \simeq p_{13*}(p_{12}\times p_{23} \times  p_{13})^!
(\fL\boxtimes \fM\boxtimes \fN)
$$

Similarly, for the right hand side, consider the commutative diagram whose right hand square is Cartesian
$$
\xymatrix{
&& 
\ar[d]^-{p_{13} \times \Id} \ar[dll]_-{\hspace{-3em} p_{13} \times \sigma p_{23} \times  p_{12}}  (X\times_Y X\times_Y X)  \ar[r]^-{p_{13}} & \XYX\ar[d]^-\Delta\\
(\XYX)^3 && \ar[ll]^-{p_{12} \times \sigma p_{45} \times  p_{34}} (\XYX) \times  (X\times_Y X\times_Y X)\ar[r]_-{p_{12} \times p_{35}} & (\XYX)^2 
}
$$
By a composition identity in the triangle and base change in the right hand square, there is a functorial equivalence 
$$
\fL \otimes ( \fN\star \sigma(\fM)) \simeq p_{13*} (p_{13} \times \sigma p_{23} \times  p_{12})^!
(\fL\boxtimes \fM\boxtimes \fN)
$$

Finally, observe that there is a natural commutative diagram 
$$
\xymatrix{
\ar[drr]^-\sim_{r} \ar[d]_-{p_{13} \times \sigma p_{23} \times  p_{12}}  (X\times_Y X\times_Y X)  \ar[rr]^-{p_{13}} &&  \XYX\\
(\XYX)^3 && \ar[ll]^-{p_{12} \times  p_{23} \times  p_{13}}  (X\times_Y X\times_Y X)  \ar[u]_-{p_{13}} 
}
$$
where the diagonal map $r$ is the rotational equivalence given by
$$
\xymatrix{
r (x_1, y_{12}, x_2, y_{23}, x_3) = (x_1, y_{12} \circ y_{23}, x_3, y^{-1}_{23}, x_2)
}
$$
where the path $y_{12} \circ y_{23}$ is the composition of the paths $y_{12}, y_{23}$, and the path
$y^{-1}_{23}$ is the path $y_{23}$ traced in the opposite direction.

Let us write $\pi:\XYX\to pt$.
We have seen that the two sides of the sought after equivalence~\eqref{general eq} can be realized by going around the edges of the above diagram
$$
\Gamma_{\XYX}((\fL \star \fM)\otimes \fN) \simeq 
\pi_*p_{13*}(p_{12} \times p_{23} \times p_{13})^!(\fL\boxtimes \fM\boxtimes \fN)
$$
$$
 \Gamma_{\XYX}(\fL\otimes (\fN\star \sigma(\fM)))
 \simeq \pi_*p_{13*}(p_{13} \times \sigma p_{23} \times p_{13})^!(\fL\boxtimes \fM\boxtimes \fN)
$$
Thus the rotational equivalence $r$ intertwines them and we obtain the  equivalence~\eqref{general eq}.
\end{proof}

\begin{theorem}\label{CY Hecke}
Suppose $X,Y$ are as in Theorem~\ref{perfect Hecke}.
Then the Hecke category  $\cH=\D(\XYX)$
carries a canonical Calabi-Yau structure.

\end{theorem}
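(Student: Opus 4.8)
The plan is to exhibit the Calabi--Yau structure explicitly and then check its two defining properties: $S^1$-invariance and the condition that the induced pairing $\cH \otimes \cH \to \Vect_\CC$ is the evaluation of a self-duality. By Theorem~\ref{perfect Hecke} the Hecke category $\cH = \D(\XYX)$ is semi-rigid, hence 2-dualizable, so the needed adjunctions and duals are available. The trace functional I propose is the de Rham cohomology of the restriction to the diagonal, i.e.\ the composite
$$
\xymatrix{
\Tr(\cH) \ar[r] & \D(X\adjquot X) \ar[r]^-{\Gamma_{dR}} & \Vect_\CC
}
$$
where the first map is the identification of $\Tr(\cH) = \cH \otimes_{\cH\otimes\cH^{op}}\cH$ with $\D$-modules on the (derived) loop space of $X$ over $Y$, namely $X\times_{X\times_Y X}X$; concretely the trace functional applied to $\fM\in\cH$ computes $\Gamma_{dR}(\Delta^!\fM)$ where $\Delta:X\to\XYX$ is the diagonal. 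Equivalently, one can describe $\tau$ as the composite $\cH\xrightarrow{\ftr}\Tr(\cH)\to\Vect_\CC$, which on an object $\fM$ takes the global de Rham cohomology of its restriction along $u:X\to\XYX$.

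First I would identify $\Tr(\cH)$ geometrically. Using Theorem~\ref{thm selfdual} (applied with the safe proper map $p:X\to Y$) and the bar-construction description of the relative tensor product from Theorem~\ref{relative product}, the Hochschild homology $\cH\otimes_{\cH\otimes\cH^{op}}\cH$ is computed by a simplicial diagram of $\D$-module categories which, after applying base change and Kashiwara's lemma as in the proof of Theorem~\ref{relative product}, is the category of $\D$-modules on the cyclic bar construction of $X$ over $Y$; its geometric realization is $\D$ of the derived loop stack $\cL_Y X = X\times_{\XYX}X$. In particular, when $X=BB$, $Y=BG$, this recovers $\D$-modules on $B\adjquot B$. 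Then the trace map $\ftr:\cH\to\Tr(\cH)$ becomes the pushforward along $\XYX \leftarrow \cL_Y X$, i.e.\ restriction along the diagonal followed by pushforward, which is exactly $\Delta^!$ followed by the natural projection. Composing with $\Gamma_{dR}$ gives the functional $\tau$ above.

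Next I would check $S^1$-invariance. The $S^1$-action on $\Tr(\cH)$ is the Connes cyclic structure on Hochschild homology; under the geometric identification above it corresponds to the loop-rotation action of $S^1$ on $\cL_Y X$. The point is that $\Gamma_{dR}:\D(\cL_Y X)\to\Vect_\CC$, being de Rham cohomology of a fixed stack, is canonically invariant under any automorphism of that stack, in particular under loop rotation — more precisely it factors through the quotient $\cL_Y X/S^1$ (the ``unipotent loop space'' or a de Rham analog thereof), so the functional is manifestly $S^1$-invariant. This is where I would invoke topological chiral homology / factorization-homology descriptions following \cite[Section 5.3]{HA}, exactly as the theorem statement promises (``described explicitly via topological chiral homology''): the value of $\cZ_\cH$ on $S^1$ with its $S^1$-action is $\int_{S^1}\cH$, and the trace functional is the structure map to the value on the disk, which is $\Vect_\CC$; $S^1$-equivariance is built in.

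Finally I would verify that the composite $\cH\otimes\cH\to\Tr(\cH)\xrightarrow{\tau}\Vect_\CC$ is the evaluation of a self-duality of $\cH$. Here I would use that $\cH$ carries a canonical pivotal structure: in the proof of Theorem~\ref{perfect Hecke} the left and right duals of a compact $\fM$ are both identified with $\iota(\fM) = \DD_{\XYX}(\sigma\fM)[2\dim X]$, and since $\sigma$ is an involution this gives a monoidal identification of $LL$ with the identity, hence a pivotal structure. By Theorem~\ref{perfect dualizable} a pivotal structure already yields a weak Calabi--Yau structure, so the composite $\cH\otimes\cH\to\Tr(\cH)\to\Vect_\CC$ is the evaluation of the self-duality $\iota:\cH\xrightarrow{\sim}\cH'$ of Verdier-plus-swap; concretely, unwinding, $\tau(\fM\star\fN) = \Gamma_{dR}(\Delta^!(\fM\star\fN))$ which by base change and the projection formula equals $\Hom_\cH(\DD\,\sigma(\fM)[2\dim X],\fN)$, exhibiting $\iota(\fM)$ as the dual of $\fM$ compatibly with the trace. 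Combining the $S^1$-invariance from the previous step with this self-duality identification gives the full Calabi--Yau structure.

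The main obstacle I expect is the compatibility of the \emph{explicit} Verdier/swap self-duality $\iota$ used in Theorem~\ref{perfect Hecke} with the \emph{abstract} $S^1$-action coming from the Cobordism Hypothesis / topological chiral homology — i.e.\ showing that the pivotal structure really does trivialize the Serre automorphism in an $S^1$-coherent way, not just as a bimodule isomorphism. Concretely this amounts to checking that the loop-rotation action on $\D(\cL_Y X)$ is compatible, under the identification $\Tr(\cH)\simeq\D(\cL_Y X)$, with the cyclic structure on the bar complex and that $\Gamma_{dR}$ kills it; the shift $[2\dim X]$ and the orientation data (the choice of trivialization of the relative dualizing complex of the smooth diagonal $\delta$, cf.\ the Poincaré duality identity $\DD_X(\omega_X)[2\dim X]\simeq\omega_X$) have to be tracked carefully to see that they are $S^1$-equivariant. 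Everything else is a matter of assembling base-change identities already used in the proofs of Theorems~\ref{relative product}, \ref{thm selfdual}, and \ref{perfect Hecke}.
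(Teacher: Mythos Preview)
Your proposed trace functional --- de Rham cohomology of the restriction along the diagonal $u:X\to\XYX$ --- is the correct one; unwinding the paper's construction for a single interval $I\subset S^1$ gives exactly $\pi_* u^!$. But your organization differs from the paper's in a way that leaves a real gap, the one you yourself flag in the last paragraph.

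The paper does \emph{not} first identify $\Tr(\cH)$ geometrically and then check $S^1$-invariance of a fixed functional. Instead it builds the trace \emph{equivariantly from the outset} via topological chiral homology: for every framed circle $S$ and every configuration of intervals $I\in I_S$, it introduces the mapping stack $\cM(D,I)$ of maps of a bounding disk $D\to Y$ with a lift to $X$ over $S\setminus I$, and uses the correspondence $\cM(I)\xleftarrow{p}\cM(D,I)\xrightarrow{q} pt$ to define $q_*p^!:\psi_\cH(I)=\D(\cM(I))\to\Vect_\CC$. Base change shows these are compatible as $I$ varies, hence assemble to $\tau_S:\int_S\cH\to\Vect_\CC$; since the construction is uniform in families of framed circles, the result is a morphism of local systems over $B\mathit{Diff}^+(S^1)$ and $S^1$-invariance is automatic. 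No separate trivialization of the Serre automorphism is needed, so the obstacle you identify is simply bypassed. The self-duality check then reduces to the case of two disjoint intervals, where the correspondence becomes the diagonal $\XYX\to(\XYX)^2$ together with the projection, recovering the Verdier evaluation pairing directly.

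Two further issues with your route: (i) your appeal to Theorems~\ref{relative product} and~\ref{thm selfdual} to identify $\Tr(\cH)$ with $\D(\cL_Y X)$ is illegitimate here, since those results require $Y$ to be a \emph{scheme}, while in the Hecke setting $Y=BG$ is a classifying stack --- indeed Section~\ref{over stacks} explains that the tensor--fiber-product identification fails over such $Y$; (ii) your argument that $\Gamma_{dR}$ is $S^1$-invariant ``because it's de Rham cohomology of a fixed stack'' is not a proof of the coherent $S^1$-invariance datum required by Definition~\ref{CY def}: one needs an entire family of compatible homotopies, which is precisely what the mapping-stack construction supplies.
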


\begin{proof}
Let us briefly recall from~\cite[Section 5.3.3]{HA} the realization of Hochschild homology and its $S^1$ symmetry from the point of view of topological chiral homology.

First, given a framed circle $S$, for example, the standard circle $S^1=[0,1]/\hspace{-0.25em}\sim$ with its natural induced orientation, let $I_S$ be the $\infty$-category given by the nerve of the poset of framed embeddings of finite disjoint unions of open intervals in $S$ with partial order given by framed inclusions.
Given the $E_1$-algebra $\cH$, introduce the natural functor 
$$
\xymatrix{
\psi_\cH:I_S\ar[r] &  \St_\CC
& \psi_{\cH}(I) = \cH^{\otimes \pi_0(I)}
}$$
whose structure on morphisms is given by the $E_1$-structure on $\cH$.
By definition, the topological chiral homology of $\cH$ over $S$ is the colimit
$$
\xymatrix{
\int_{S} \cH = \colim_{I_S} \psi_{\cH}
}$$
Observe that  the above makes sense for families of framed circles, and so exhibits $\int_{S} \cH$ as the fiber of an $\oo$-local system  over
the moduli of framed circles $B\mathit{Diff^+}(S^1)$.

Recall as well that for the standard circle $S^1=[0,1]/\hspace{-0.25em}\sim$, the realization of the bar resolution in terms of subdivisions of the interval $[0,1]$ provides a natural 
morphism which is an equivalence
$$
\xymatrix{
\Tr(\cH) \simeq | \cH\otimes_{\cH \otimes \cH^{op}} C_*(\cH)| \ar[r]^-\sim & \colim_{I_{S^1}} \psi_{\cH} = \int_{S^1} \cH
}
$$

Now to endow $\cH$ with a Calabi-Yau structure,  let $\SS\to B\mathit{Diff^+}(S^1)$ denote the universal framed circle, and let $\mathbb{V}\hspace{-0.25em}\operatorname{ect}_\CC$ denote the constant $\oo$-local system on 
$B\mathit{Diff^+}(S^1)$ with fiber $ \Vect_\CC$. We must 
 construct a morphism of $\oo$-local systems
$$
\xymatrix{
\tau:\int_{\SS} \cH \ar[r] &\mathbb{V}\hspace{-0.25em}\operatorname{ect}_\CC
}
$$
Moreover, for a given framed circle $S$ and pair of disjoint open intervals $I_1 \coprod I_2 \subset S$  (hence any framed circle with any  pair of disjoint open intervals), the natural composite map 
$$
\xymatrix{
\cH \otimes \cH \simeq \cH^{\pi_0(I_1\coprod I_2)} \ar[r] & \int_{S} \cH \ar[r]^-{\tau_S} &\mathbb{V}\hspace{-0.25em}\operatorname{ect}_\CC
}
$$
must be the evaluation pairing of a self-duality of $\cH\in \St_\CC$.

To achieve this, 
given an object $I = \coprod_i I_i \in I_S$, let $\cM(I)$ be the stack of maps $\coprod_i  I_i\to Y$ with lifts $\coprod_i \partial I_i \to  X$ at boundary points. (Note we do not regard the boundary points of the intervals as embedded in $S$ since they may coincide as, for example, for an interval of the form $I = S\setminus\{s\}$ where $s\in S$ is a single point.)  Let $D$ be a closed disk with $\partial D = S$, and
 $\cM(D, I)$ be the stack of maps $D\to Y$ with a lift over the complement $S\setminus (\coprod_i  I_i) \to  X$.
 Consider the natural correspondence
 $$
 \xymatrix{
 \cM(I) & \ar[l]_-p \cM(D, I) \ar[r]^-q & pt
 }
 $$
  Observe that $\psi_\cH(I) = \D(\cM(I))$ and thanks to standard identities for $\D$-modules, the functor
  $$
  \xymatrix{
  \D(\cM(I))\ar[r] &  \Vect_\CC &  M \ar@{|->}[r] & q_*p^!M 
  }
  $$
  provides a natural augmentation to the diagram given by $\psi_\cH$ and hence a functor
  $$
  \xymatrix{
\tau_S:\int_{S} \cH \ar[r] &\Vect_\CC
}
  $$
Since  the above makes sense for families of framed circles, 
it descends to the sought after
morphism of $\oo$-local systems
$$
\xymatrix{
\tau:\int_{\SS} \cH \ar[r] &\mathbb{V}\hspace{-0.25em}\operatorname{ect}_\CC
}
$$

Finally, consider the natural composite map
$$
\xymatrix{
\cH \otimes \cH \simeq \cH^{\pi_0(I_1\coprod I_2)} \ar[r] & \int_{S} \cH \ar[r]^-{\tau_S} &\mathbb{V}\hspace{-0.25em}\operatorname{ect}_\CC
}
$$
Unwinding the above, we find it is given by 
  the functor
  $$
  \xymatrix{
 \cH \otimes \cH \simeq \D(\XYX \times \XYX) \ar[r] &  \Vect_\CC
 &
  M \ar@{|->}[r] & q_*p^!M 
  }
  $$
where $p:\XYX \to \XYX \times \XYX$ is the diagonal and $q:\XYX\to pt$ is the projection.
Observe that under the identification of $\cH\in \St_\CC$ with its dual given by Verdier duality, the above morphism
is precisely the evaluation pairing.
\end{proof}

\begin{remark}
We will soon identify $\Tr(\cH) \simeq \int_{S} \cH$ with a full subcategory of $\D$-modules on the loop space $\cL Y$.
Consider the natural correspondence of constant loops
$$
\xymatrix{
\cL Y & \ar[l]_-e Y \ar[r]^-\pi & pt
}
$$
Thanks to standard identities for $\D$-modules, 
the trace $\tau_S:\int_{S} \cH\to \Vect_\CC$ will factor through the functor
  $$
  \xymatrix{
  \D(\cL Y)\ar[r] &   \Vect_\CC & M \ar@{|->}[r] & \pi_*e^!M
  }
  $$
\end{remark}

The above theorems thus allow us to apply the
Cobordism Hypothesis \cite{jacob TFT} to the Hecke category to obtain
an oriented two-dimensional topological field theory.

\begin{cor}\label{Hecke TFT}
 There is a unique symmetric monoidal functor
$\cZ: 2Bord \to Alg_{(1)}(\St_\CC)$ 
with $\cZ(pt)=\cH$. 
\end{cor}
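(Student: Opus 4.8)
The plan is to obtain $\cZ$ as a direct application of Lurie's Cobordism Hypothesis~\cite{jacob TFT}, the substantive geometric content having already been packaged into the structural results for $\cH=\D(\XYX)$. Recall that the Cobordism Hypothesis identifies symmetric monoidal functors $2Bord^{fr}\to Alg_{(1)}(\St_\CC)$ with the space of $2$-dualizable objects of $Alg_{(1)}(\St_\CC)$, and symmetric monoidal functors out of the oriented bordism category $2Bord$ with the subspace of $SO(2)$-homotopy-fixed points among those objects.

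First I would record that $\cH$ is $2$-dualizable: by Theorem~\ref{perfect Hecke} it is semi-rigid (here one uses that $X=BB$ has smooth diagonal and that $p:X\to Y$ is proper representable), and by Theorem~\ref{perfect dualizable} every semi-rigid monoidal category is $2$-dualizable. This already yields a unique framed theory $\cZ^{fr}:2Bord^{fr}\to Alg_{(1)}(\St_\CC)$ with $\cZ^{fr}(pt)=\cH$. To promote it to an oriented theory I would invoke the second half of Theorem~\ref{perfect dualizable} together with Corollary~\ref{TFT corollary}: following \cite[Section~4.2]{jacob TFT}, an $SO(2)$-fixed-point structure on a $2$-dualizable object is encoded by a Calabi-Yau structure in the sense of Definition~\ref{CY def} (equivalently Definition~\ref{jacob defs}(2)), and such a structure on $\cH$ is furnished by Theorem~\ref{CY Hecke}. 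Feeding this $SO(2)$-fixed point into the Cobordism Hypothesis produces the desired functor $\cZ:2Bord\to Alg_{(1)}(\St_\CC)$, which is unique once the canonical Calabi-Yau structure is specified as input (the phrase ``$\cZ(pt)=\cH$'' being understood to include this data).

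The hard part will not be any new argument but making sure the translation between cyclic trace data and the $SO(2)$-action is the correct one. Concretely, one must be careful to distinguish a mere trivialization of the Serre automorphism $S_\cH$ — which by Lemma~\ref{separable} is represented by the $\cH$-bimodule $\cH'$, and which only produces the weakly oriented ($\Omega S^2$-fixed) theory — from a genuine homotopy-coherent $SO(2)$-fixed point, which requires in addition the $S^1$-invariance of the trace functional $\tau:\Tr(\cH)\to\Vect_\CC$ and all its higher coherences. That the Calabi-Yau structure of Theorem~\ref{CY Hecke}, built via topological chiral homology over the universal framed circle, does assemble into such a fixed point is precisely the content of \cite[Section~4.2]{jacob TFT}, as reflected in Corollary~\ref{TFT corollary}; citing this closes the argument.
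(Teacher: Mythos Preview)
Your proposal is correct and follows exactly the paper's approach: the paper gives no proof beyond the sentence preceding the corollary, which states that Theorems~\ref{perfect Hecke} and~\ref{CY Hecke} allow one to apply the Cobordism Hypothesis~\cite{jacob TFT}. Your write-up is in fact more careful than the paper's own treatment, correctly distinguishing the weak (pivotal) from the genuine Calabi-Yau structure and noting that uniqueness implicitly includes the choice of the canonical Calabi-Yau datum.
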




\ssn{Loop spaces and horocycle correspondences}

\newcommand{\perpK}{{}^\perp{\cK}}

We establish here the basic relation between Hecke categories
and loop spaces. It is an abstraction of the horocycle correspondence
from representation theory. 
We will continue with the assumptions that $p:X\to Y$ is proper and
$X$ has smooth diagonal $\delta:X\to X\times X$. 

Consider the fundamental correspondence
$$
\xymatrix{
 \cL Y  = Y\ti_{Y\ti Y} Y & \cL Y \ti_Y X =X \ti_{X \ti Y} X \ar[l]_-{q} \ar[r]^-{\epsilon} & X\ti_Y X.\\
}
$$
where $q$ is a base change of $p$, and $\epsilon$ is a base change $\delta$.
%
%

Define the functor
$$
\xymatrix{
F: \D(X\ti_Y X) \ar[r]^-{\epsilon^!} & \D(\cL Y \ti_Y X) 
 \ar[r]^-{q_*}& \D(\cL Y) \\
}
$$
Since $q$  is a base change of $p$ and hence proper, there is an adjunction $(q_*,q^!)$, and since $\epsilon$ is a base change of $\delta$ and hence
smooth, of relative dimension $-\dim X$, there is an equivalence $\epsilon^*\simeq\epsilon^![2\dim X]$
and
 an adjunction $(\epsilon^*,\epsilon_*)$.
Thus there is the continuous right adjoint
$$
\xymatrix{
F^r:\D(\cL Y)   \ar[r]^-{q^!}&\D(\cL Y \ti_Y X)  \ar[rr]^-{\epsilon_*[-2\dim X]} && \D(X\ti_Y X)\\
}
$$ 


Let $\cI(F) \subset \D(\cL Y)$ denote the cocompletion of the  image of $F$.
It is naturally equivalent to the left orthogonal ${}^\perp \cK(F^r) \subset \D(\cL Y)$ to the kernel of $F^r$.
Since $q^!$ satisfies descent, both in turn are naturally equivalent to the full subcategory of $ \D(\cL Y)$ of objects whose image under $q^!$  is in the cocompletion of the image of $\epsilon^!$.

\begin{theorem}\label{thm center}
There is a natural commutative diagram
$$
\xymatrix{
 \D(\XYX) & \\
{}^\perp \cK(F^r) \ar[u]^-{F^r} \ar[r]^-\sim&  \ar[ul]_-\fz \Z(\D(\XYX)) 
}
$$
\end{theorem}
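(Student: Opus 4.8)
The plan is to compute the center $\Z(\cH)=\hh^*(\cH)$ of the Hecke category $\cH=\D(\XYX)$ from its Hochschild resolution, to geometrize that resolution using only base changes of the two structure maps $p:X\to Y$ and $\delta:X\to X\ti X$, and to recognize the resulting (co)limit of categories as the full subcategory $\cI(F)={}^\perp\cK(F^r)\subset\D(\cL Y)$ cut out by the horocycle transform. The compatibility with the central functor $\fz$ will be arranged to hold by construction, so that the real content is the identification of that (co)limit with $\cI(F)$.

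One convenient route is to pass through the trace. By Theorems~\ref{perfect Hecke} and~\ref{CY Hecke}, $\cH$ is semi-rigid with a Calabi--Yau (in particular pivotal) structure, so Proposition~\ref{reversing hh} provides a canonical equivalence $\Z(\cH)\simeq\Tr(\cH)$; since the reversed-diagram identification underlying that proposition is, level by level, the passage to right adjoints, it carries $\fz:\Z(\cH)\to\cH$ (projection to the $0$-th cosimplicial term) to the right adjoint of the trace functor $\ftr:\cH\to\Tr(\cH)$ (the map from the $0$-th simplicial term). It therefore suffices to identify $\Tr(\cH)$ with $\cI(F)$ in such a way that $\ftr$ is carried to $F$ (factored through its image): the right adjoint of $\ftr$ is then $F^r$ restricted to $\cI(F)$, which is exactly the claim.

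For the latter I would use the cyclic bar simplicial object $\mathbf{N}^{cyc}_\bullet(\cH)$ of Section~\ref{hochschild complexes}, whose $n$-th term is $\cH^{\ot (n+1)}\simeq\D((\XYX)^{\ti(n+1)})$ and whose faces and degeneracies are the convolution--composition and cyclic maps attached to the groupoid $\XYX\rightrightarrows X$ over $Y$ --- each a pull-push along a correspondence base-changed from $\delta$ (a $!$-pullback along a smooth map) and from $p$ (a $*$-pushforward along a proper map). The two standing hypotheses thus make every structure functor continuous and base-change-compatible, so there is no safeness obstruction and, by Section~\ref{reversing diagrams}, the colimit $\Tr(\cH)=|\mathbf{N}^{cyc}_\bullet(\cH)|$ may also be computed as the limit of the diagram of right adjoints. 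Next I would build the comparison with $\D(\cL Y)$: the correspondence $\cL Y\xleftarrow{q}\cL Y\ti_Y X\xrightarrow{\epsilon}\XYX$ --- with $q$ a base change of the proper $p$ and $\epsilon$ a base change of the smooth $\delta$ --- and its iterates over the cyclic nerve furnish a canonical augmentation $\mathbf{N}^{cyc}_\bullet(\cH)\to\D(\cL Y)$ that realizes $F=q_*\epsilon^!$ on the $0$-th term, hence a functor $\Tr(\cH)\to\D(\cL Y)$ whose right adjoint is induced by the $q^!$'s. One then checks that this comparison has image generating $\cI(F)$ (essentially the definition of $\cI(F)$, using descent for $q^!$) and that it is fully faithful onto $\cI(F)$, equivalently that the monad $F^rF$ on $\cH$ descends, after passing to the trace, to an idempotent with image $\cI(F)$; this yields the desired equivalence $\Tr(\cH)\simeq\cI(F)$ intertwining $\ftr$ with $F$. (One could also attack $\Z(\cH)$ head-on through the transposed Hochschild cosimplicial object of Sections~\ref{transposed HH} and~\ref{hochschild complexes}, geometrizing its terms via Proposition~\ref{bimodule dual} and the Verdier self-duality of $\cH$; the bookkeeping is heavier but the geometry identical.)

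The hard part will be the full faithfulness of the comparison onto $\cI(F)$ --- equivalently, that $F$ exhibits $\cI(F)$ as the localization of $\cH$ along $\ker F$. This is where the failure of the Tannakian/Morita picture for $\D$-modules on stacks (Section~\ref{Tannakian failure}) is felt: one cannot deduce $\Z(\cH)=\D(\cL Y)$ by flat descent as in the quasicoherent case, the Hochschild (co)limit genuinely differs from $\D(\cL Y)$, and the only handle is the explicit geometry. The argument must exploit the properness of $q$ and smoothness of $\epsilon$, together with repeated base change and the projection formula, to pin down the monad $F^rF$ and to see that nothing outside the image of the horocycle correspondence is detected; everything else reduces to routine manipulation of the six functors.
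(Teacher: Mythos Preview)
Your overall strategy is sound, but the paper reverses your order of attack and thereby finds a shortcut for exactly the step you flag as hard. Rather than computing $\Tr(\cH)$ first and deducing $\Z(\cH)$ via pivotality, the paper computes $\Z(\cH)$ directly and obtains the trace statement (Corollary~\ref{cor trace}) by taking left adjoints.

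The key simplification you miss is to resolve $\cH$ not by the absolute bar construction over $\Vect_\CC$ but by the \emph{relative} bar construction over $\cB=\D(X)$: the diagonal pushforward $\cB\to\cH$ is monoidal, so $\cH$ is an algebra in $\cB$-bimodules, and the relative bar resolution reduces the center to the totalization of the cosimplicial diagram with $n$-th term
\[
\Fun^L_{\D(X\times X)}\bigl(\D((\XYX)^{\times_X n}),\,\D(\XYX)\bigr).
\]
Proposition~\ref{prop int trans adjoint} --- which you do not invoke --- then supplies the full faithfulness in one stroke: the integral transform functor from $\D$-modules on a fiber product over a stack with smooth diagonal to the corresponding linear functor category admits a fully faithful left adjoint. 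Applied levelwise, this embeds the cosimplicial diagram above fully faithfully into $\D(X^{\times_Y(\bullet+1)}\times_Y\cL Y)$ with $!$-pullback structure maps; descent for $q^!$ together with the full embedding then identifies $\Z(\cH)$ with the subcategory of $\D(\cL Y)$ whose $q^!$-pullback lies in the cocompletion of the image of $\epsilon^!$, which is ${}^\perp\cK(F^r)$.

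Your parenthetical head-on attack via the transposed Hochschild object is closer in spirit, but still uses the absolute resolution and so misses this reduction. The ``hard part'' you anticipate --- full faithfulness onto $\cI(F)$ --- is therefore not handled by an ad hoc analysis of the monad $F^rF$, but by the separately established structural fact about integral transforms over $BG$.
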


\begin{proof}
For notational convenience, set $\cA = \D(X \times_Y X)$ and $\cB = \D(X)$.  
Observe that pushforward along the relative diagonal $ \cB \to \cA$ is monoidal, and thus we may regard $\cA$ as an algebra in $\cB$-bimodules. 
Given an algebra $\cA$ in $\cB$-bimodules, we have its relative bar resolution
 $$
  \xymatrix{
  \cA \simeq 
  \left| \cA^{\otimes_\cB (\bullet+2)} \right| 
  }
  $$
   which can be used to calculate its center
  $$  \xymatrix{
\Z(\cA) =  \Hom_{\cA \otimes \cA^{op}}(\cA, \cA) = \Hom_{\cA \otimes \cA^{op}}(\left| \cA^{\otimes_\cB (\bullet+2)} \right|,  \cA) = 
  \Tot\left\{\Hom_{\cB \otimes \cB^{op}} (\cA^{\otimes_\cB \bullet},  \cA) \right\}
}  
$$
We will access the center as the totalization of the cosimplicial object 
$$
\cC^\bullet =  \Hom_{\cB \otimes \cB^{op}} (\cA^{\otimes_\cB \bullet},  \cA)
$$

Unwinding the notation and using the canonical identity $\D (X)^{\otimes k} \simeq \D(X^k)$,  we find the cosimplicial category
\[ 
\xymatrix{
\cC^\bullet \simeq 
\Fun_{\D(X\times X)}^L(\D( (X \times_Y X)^{\times_X\bullet } ), \D(X \times_Y X))
}
\]

 Using the canonical identification of functors with integral transforms of Proposition~\ref{prop int trans adjoint}, we have a level-wise fully faithful map
 of cosimplicial diagrams
 \[ 
\xymatrix{
\cC^\bullet \ar[r] &  
 \D(X^{\times_Y (\bullet+1)} \times_Y \cL Y)
 }
\]
where  the cosimplicial  structure maps on the latter are given by $!$-pullback functors.

Thus the center $\Z(\cA)$ is equivalent to the full subcategory
 of $\D(\cL Y)$ of objects whose image under $q^!$  is in the cocompletion of the image of $\epsilon^!$.
\end{proof}

\begin{cor}\label{cor trace}
There is a natural commutative diagram
$$
\xymatrix{
& \ar[dl]_-\ftr \D(\XYX) \ar[d]^-{F} &\\
\Tr(\D(\XYX)) \ar[r]^-\sim   & \cI(F)
}
$$
\end{cor}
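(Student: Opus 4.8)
### Proof proposal for Corollary~\ref{cor trace}

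The plan is to run the dual version of the argument in the proof of Theorem~\ref{thm center}, replacing the cobar resolution computing the center by the bar resolution computing the trace, and replacing the identification of functors with integral kernels (together with its left adjoint) by the corresponding statement for tensor products. Concretely, set $\cA = \D(X\times_Y X)$ and $\cB = \D(X)$ as before; pushforward along the relative diagonal makes $\cA$ an algebra object in $\cB$-bimodules. The trace is computed by the relative bar resolution as the geometric realization
$$
\Tr(\cA) \;\simeq\; \cA\otimes_{\cA\otimes\cA^{op}}\cA \;\simeq\; \left|\,\cA\otimes_{\cB\otimes\cB^{op}}\cA^{\otimes_\cB \bullet}\,\right|,
$$
so I would access $\Tr(\cA)$ as the colimit of the simplicial object $\cC_\bullet = \cA\otimes_{\cB\otimes\cB^{op}}\cA^{\otimes_\cB\bullet}$, with simplicial structure maps coming from the algebra structure. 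Using $\D(X)^{\otimes k}\simeq \D(X^k)$ and Theorem~\ref{relative product} (noting $X\to Y$ is proper, hence safe, so the hypotheses are met), each level is identified with $\D$-modules on an iterated fiber product: $\cC_n \simeq \D\big(\cL Y \times_Y X^{\times_Y n}\big)$, exactly as the levels of the fundamental correspondence diagram but now with the simplicial maps given by $*$-pushforward functors rather than $!$-pullbacks.

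The next step is to take the colimit of this simplicial diagram of pushforward functors. By the standard device of Section~\ref{reversing diagrams}, this colimit in $\St_\CC$ is computed as the limit of the corresponding cosimplicial diagram of right adjoints, i.e. of $!$-pullbacks; this is precisely the diagram whose limit is the full subcategory of $\D(\cL Y)$ cut out by the condition that $q^!$ of an object lands in the cocompletion of the image of $\epsilon^!$. That subcategory is, by definition, ${}^\perp\cK(F^r)$, which by the discussion preceding the theorem equals $\cI(F)$, the cocompletion of the image of $F$. Here one invokes Proposition~\ref{prop int trans adjoint} in the form that the integral transform $\Phi$ for $X_1\times_Y X_2$ has a fully faithful left adjoint — dually, $\Psi$ (the map from tensor products) has a fully faithful right adjoint — so that the levelwise comparison maps assemble into the required identification of the (co)limit with $\cI(F)$. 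Finally, the trace map $\ftr\colon \cA \to \Tr(\cA)$ is by construction the map into the $0$-simplices of the bar resolution, and under the identification of $\cC_0 = \D(\cL Y\times_Y\{\ast\}) \hookrightarrow \D(\cL Y)$ this is visibly the functor $F = q_*\epsilon^!$; so the triangle commutes.

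The main obstacle is the same subtlety that makes Theorem~\ref{thm center} delicate: the levelwise maps $\cC_\bullet \to \D(\cL Y\times_Y X^{\times_Y\bullet})$ are only fully faithful (via the adjoint of $\Phi$), not equivalences, so one must check that passing to the (co)limit the images glue to exactly the subcategory $\cI(F)$ and that the comparison map is an equivalence onto it — this is where descent for $q^!$ and the precise compatibility of the bar simplicial maps with the correspondence $\cL Y \leftarrow \cL Y\times_Y X \rightarrow X\times_Y X$ must be verified, rather than merely asserted. A secondary point to get right is the bookkeeping of the $[2\dim X]$ shifts and the identification $\epsilon^*\simeq\epsilon^![2\dim X]$ when matching $F^r$ (which computes the center) against the adjoint structure maps; for the trace statement itself these shifts should cancel, but one should confirm the orientation/shift conventions are consistent with those fixed in Theorem~\ref{CY Hecke}.
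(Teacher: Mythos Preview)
Your approach is correct in outline but takes a longer route than the paper. The paper's proof is a two-line deduction from Theorem~\ref{thm center}: since $\cH=\D(\XYX)$ is semi-rigid and pivotal (Theorem~\ref{perfect Hecke}), the general machinery of Section~\ref{rigid categories} (specifically Corollary~\ref{reversing arrows} and Proposition~\ref{reversing hh}) identifies the augmented semi-cosimplicial diagram computing the center with the right-adjoint of the augmented semi-simplicial diagram computing the trace; hence one simply passes to left adjoints in the diagram of Theorem~\ref{thm center} and reads off the corollary. In particular $F^r$ is the right adjoint of $F$, and $\fz$ is the right adjoint of $\ftr$, so the whole triangle transposes at once.

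What you propose instead is to rerun the argument of Theorem~\ref{thm center} in dual form: set up the bar resolution for the trace, reverse the colimit to a limit via Section~\ref{reversing diagrams}, and then re-identify that limit with the same full subcategory. This works, but it duplicates effort --- the reversal you invoke is exactly the abstract statement (center $\simeq$ trace for semi-rigid pivotal categories) that the paper has already packaged, so there is no need to re-verify the levelwise identifications or the descent condition for $q^!$. One small correction: your appeal to Theorem~\ref{relative product} is misplaced, since that result requires the base $Y$ to be a scheme while here $Y=BG$; the relevant input over a classifying stack is Corollary~\ref{selfdual over stack} together with Proposition~\ref{prop int trans adjoint}, which you do invoke later. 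The shift bookkeeping you worry about at the end is a non-issue in the paper's argument precisely because everything is obtained by formal adjunction from Theorem~\ref{thm center}.
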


\begin{proof}
We have seen that $ \D(X\ti_Y X)$ is semi-rigid and pivotal, and hence its center and trace are equivalent.
More precisely, the augmented semi-cosimplicial diagram calculating the center can be obtained from the augmented semi-simplicial diagram calculating the trace by passing to right adjoints. In particular, the universal central functor can be obtained as the right adjoint of
the universal trace. Thus passing to left adjoints in the diagram of the previous theorem provides
the diagram of the corollary.
\end{proof}


\ssn{Character sheaves}

Now we  apply our previous results
to our motivating example when $X=BB$, $Y=BG$, for a reductive group $G$  and   Borel subgroup $B\subset G$. 
Thus we are studying the Hecke category $\cH_G = \D(B\bs G/B)$
of Borel bi-equivariant $\D$-modules on  $G$.

With this setup, the fundamental correspondence is the horocycle correspondence
$$
\xymatrix{
G/G & \ar[l]_-{q} (G\ti G/B)/G \ar[r]^-{\epsilon} & B\bs G/B.
}
$$ 
It contains the traditional Springer correspondence
as a subspace
$$
\xymatrix{
G/G & \ar[l]_-{} \wt G/G\simeq B/B \ar[r]^-{} & B\bs B/B \simeq BB
}
$$ 
where $\wt G \subset G \ti G/B$ comprises pairs where the group element fixes the flag.

%

\begin{definition}
The dg category $\chsh_G$ of unipotent character sheaves is
defined to be the full subcategory of $\D(G\adjquot G)$ generated
(under colimits) by the image of the Harish
Chandra transform~$F$.
\end{definition}

 Applying  our previous results, we immediately obtain the following. 

\begin{theorem}\label{thm char shvs}
The category of unipotent character sheaves $\chsh_{G}$
is equivalent to both the trace and center 
of the Hecke category $\cH_{G}$.
The equivalences fit into natural adjoint commutative diagrams
$$
\xymatrix{
&  \cH_{G} \ar[dl]_-\ftr \ar[d]<-1ex>_-{F}&\\
 \Tr(\cH_{G}) \ar[r]^-\sim & \ar[u]<-1ex>_-{F^r} \chsh_{G} \ar[r]^-\sim
 &  \ar[ul]_-\fz \Z(\cH_G)
   }
$$
 \end{theorem}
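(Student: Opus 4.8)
The plan is to deduce the theorem by specializing the general machinery of Section~\ref{character section} --- Theorem~\ref{thm center} and Corollary~\ref{cor trace} --- to the pair $X=BB$, $Y=BG$, together with the dualizability results Theorems~\ref{perfect Hecke} and \ref{CY Hecke} in this case. The only genuine content is to check that this pair satisfies the two standing hypotheses of the section and to match the abstract transform $F$ with the Harish Chandra transform; everything else is assembly.

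First I would verify the hypotheses. The projection $p\colon BB\to BG$ is representable with fibers the flag variety $G/B$, hence proper; and the diagonal $\delta\colon BB\to BB\times BB$ is the classifying map of the conjugation action of $B$ on itself, which is smooth of relative dimension $-\dim B$ since $B$ is a smooth affine group scheme. Under the identification $B\bs G/B\simeq BB\times_{BG}BB$, the Hecke category $\cH_G$ is exactly the category $\cH=\D(X\times_Y X)$ of that section, so Theorem~\ref{perfect Hecke} shows $\cH_G$ is semi-rigid and Theorem~\ref{CY Hecke} equips it with a canonical Calabi-Yau, in particular pivotal, structure. By Proposition~\ref{reversing hh} (equivalently, by the pivotal case of Corollary~\ref{TFT corollary}) the pivotal structure induces a canonical identification $\Z(\cH_G)\simeq\Tr(\cH_G)$.

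Next I would identify the abstract transform $F=q_*\epsilon^!$ with the Harish Chandra transform of the Introduction. Here $\cL Y=\cL BG\simeq G\adjquot G$, and the fundamental correspondence of Section~\ref{character section} specializes to the horocycle correspondence
$$\xymatrix{ G\adjquot G & \ar[l]_-{q} (G\times G/B)/G \ar[r]^-{\epsilon} & B\bs G/B, }$$
with $(G\times G/B)/G\simeq \cL(BG)\times_{BG}BB\simeq G\adjquot B$, the map $q$ a base change of the proper map $p$, and $\epsilon$ a base change of the smooth map $\delta$. Thus $F=q_*\epsilon^!$ is precisely $p_*\delta^!$, so the cocompletion $\cI(F)\subset\D(G\adjquot G)$ of its image is by definition $\chsh_G$, and it coincides with ${}^\perp\cK(F^r)$, the left orthogonal of the kernel of the right adjoint $F^r$.

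Finally I would assemble the diagrams: Corollary~\ref{cor trace} supplies the commutative triangle relating $\ftr$, $F$ and $\Tr(\cH_G)\simeq\cI(F)=\chsh_G$, and Theorem~\ref{thm center} supplies the one relating $\fz$, $F^r$ and $\Z(\cH_G)\simeq{}^\perp\cK(F^r)=\chsh_G$; these two triangles are interchanged by passing to adjoints throughout the augmented (semi-)cosimplicial diagrams computing trace and center, exactly as in the proof of Corollary~\ref{cor trace}. Combined with $\Z(\cH_G)\simeq\Tr(\cH_G)$ this yields the asserted adjoint commutative diagram. The one point deserving care --- and the natural place for a gap --- is that the integral-transform description of the center used in Theorem~\ref{thm center} (via Proposition~\ref{prop int trans adjoint}) must agree, after passing to adjoints, with the bar/topological-chiral-homology description of the trace used in Theorem~\ref{CY Hecke}, so that both routes genuinely land on the \emph{same} full subcategory $\chsh_G\subset\D(G\adjquot G)$ compatibly with $\fz$ and $\ftr$. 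Since $\cH_G$ is semi-rigid with a pivotal structure, Corollary~\ref{reversing arrows} supplies precisely this compatibility of the two cosimplicial diagrams, and the theorem follows at once.
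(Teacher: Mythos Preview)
Your proposal is correct and follows exactly the paper's approach: the paper's proof is literally the sentence ``Applying our previous results, we immediately obtain the following,'' i.e., specialize Theorem~\ref{thm center} and Corollary~\ref{cor trace} to $X=BB$, $Y=BG$ after noting that the fundamental correspondence becomes the horocycle correspondence. Your write-up simply makes explicit the hypothesis checks and the identification $\cI(F)=\chsh_G$ that the paper leaves to the reader.
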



\section{Monodromic variant}\label{monodromic section}
In this section, we extend our previous results to Hecke categories of monodromic $\D$-modules.
For simplicity, we will  only consider unipotent monodromy, though  generalizations to arbitrary monodromies should hold. Most of what we state is purely algebraic and could be presented in far greater generality.


\subsection{Monodromic $\D$-modules}

Let $X$ be a stack and $\cA_X\in \D(X)$ a commutative algebra object.

We refer to objects of the category of $\cA_X$-modules 
$$\D_{\cA_X}(X) := \Mod_{\cA_X}(\D(X))
$$ as monodromic
$\D$-modules.

A smooth map $\pi:\tilde X \to X$ provides a natural source of such algebras.
The Barr-Beck-Lurie theorem applied to the adjunction 
$$\xymatrix{ \pi^*: \D(X) \ar[r]<+.5ex> & \ar[l]<+.5ex>  \D(\Xtil) : \pi_*}$$
provides an equivalence
of the full subcategory of $\D(\tilde X)$ generated by $*$-pullbacks from $\D(X)$  with modules over the monad $T=\pi_*\pi^*$
acting on $\D(X)$. 
By the projection formula,
the monad $T$ is  represented by the algebra object
$$
\cA_X = \pi_*\pi^*\omega_X  \in \D(X)
$$
Thus one can view  $\D_{\cA_X}(X)$ as the $\D$-module affinization of $\tilde X$.

\begin{example}
If $\pi:\tilde X\to X$ is a torsor for a torus $H$, then a monodromic $\D$-module on $X$ 
in the above sense is equivalent to a traditional $H$-monodromic $\D$-module on $\tilde X$ with unipotent monodromy around $H$.
\end{example}

Recall by Proposition~\ref{BG semirigid} that if $X$ has smooth diagonal, then the monoidal category $\D(X)$ is semi-rigid with a 
canonical pivotal structure. Since the algebra $\cA_X$ is commutative, the category $\D_{\cA_X}(X)$ is naturally 
symmetric monoidal with unit $\cA_X$. We have the following algebraic observation extending Proposition~\ref{BG semirigid}.

\begin{prop}
For a stack $X$ with smooth diagonal, the 
 monoidal category $\D_{\cA_X}(X)$ is semi-rigid with a 
canonical pivotal structure.
\end{prop}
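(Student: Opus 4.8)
The plan is to deduce the statement from Proposition~\ref{BG semirigid} by base change along the commutative algebra object $\cA_X\in\D(X)$. The key structural inputs are that the free--module functor $F=\cA_X\otimes(-)\colon\D(X)\to\D_{\cA_X}(X)$ is symmetric monoidal (because $\cA_X$ is commutative), and that its right adjoint, the forgetful functor $U\colon\D_{\cA_X}(X)\to\D(X)$, is conservative and preserves all colimits (being monadic, cf.~\cite[4.2]{HA}). I would use only these two facts together with what Proposition~\ref{BG semirigid} already records about $\D(X)$.

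First I would establish compact generation. By~\cite[Theorem 8.1.1]{finiteness} the category $\D(X)$ is compactly generated; fix a set $\{c_i\}$ of compact generators. Then each $F(c_i)$ is compact in $\D_{\cA_X}(X)$, since $\Hom_{\D_{\cA_X}(X)}(F(c_i),-)\simeq\Hom_{\D(X)}(c_i,U(-))$ and $U$ preserves filtered colimits; and the objects $F(c_i)$ generate, since $U$ is conservative (if $\Hom_{\D(X)}(c_i,U(m))\simeq 0$ for all $i$ then $U(m)\simeq 0$, hence $m\simeq 0$). Next I would check dualizability of these generators: by Proposition~\ref{BG semirigid} each $c_i$ is dualizable in $\D(X)$ with dual the shifted Verdier dual $\DD_X(c_i)[2\dim X]$, which is again compact since Verdier duality preserves compact objects, and a symmetric monoidal functor carries dualizable objects, together with their evaluation and coevaluation morphisms, to dualizable objects; hence $F(c_i)$ is dualizable in $\D_{\cA_X}(X)$ with dual $F(\DD_X(c_i)[2\dim X])$. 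Following the reformulation of dualizability in Section~\ref{duals}, this exhibits $\D_{\cA_X}(X)$ as semi-rigid. The canonical pivotal structure is then the one borne by any symmetric monoidal $\oo$-category: the symmetry of $\D_{\cA_X}(X)$ (coming from commutativity of $\cA_X$) supplies a monoidal identification of left and right duals, equivalently a monoidal trivialization $LL\simeq\Id$ of the double--dual automorphism, just as for $\D(X)$ itself.

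I expect this argument to be essentially formal bookkeeping; the only point needing a moment's care is that the duals of the chosen compact generators remain compact, which is immediate here. It is worth noting that nothing in the proof uses that $\cA_X$ arises from a smooth map $\Xtil\to X$: only commutativity of $\cA_X$ as an algebra object of the semi-rigid symmetric monoidal category $\D(X)$ is needed, so the same argument applies to an arbitrary such $\cA_X$, and in fact records a general principle about module categories over a commutative algebra object in a semi-rigid symmetric monoidal category.
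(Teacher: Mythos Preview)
Your proposal is correct and follows essentially the same approach as the paper's proof: both show that the free modules $\cA_X\otimes\fM$ on compact objects $\fM\in\D(X)$ form a set of compact dualizable generators of $\D_{\cA_X}(X)$, with duals given by $\cA_X\otimes\fM^\vee$. Your version is more fully spelled out (invoking the adjunction $(F,U)$ and the fact that symmetric monoidal functors preserve dualizable objects), and your closing observation that only commutativity of $\cA_X$ is used is a worthwhile remark, but the underlying argument is the same.
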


\begin{proof}
For compact objects $\fM\in \D(X)$, objects of the form $\cA_X \otimes \fM\in \D_{\cA_X}(X)$ are compact and generate. Recall that $\fM^\vee = \DD_X(\fM)[2\dim X]\in \D(X)$ is the monoidal dual of $\fM \in \D(X)$. It is simple to check that 
$\cA_X\otimes \fM^\vee\in \D_{\cA_X}(X)$ is the monoidal dual of $\cA_X \otimes \fM \in \D_{\cA_X}(X)$.
\end{proof}


\subsection{Monodromic Hecke categories}

Let us return to the setup of a proper map $p:X\to Y$ of stacks where
$X$ has smooth diagonal $\delta:X\to X\times X$. 
Suppose given 
$\cA_X\in \D(X)$ a commutative  algebra object. 

Let $j:\XYX \to X\times X$ be the natural map, introduce the commutative algebra object
$$
\cA^{(2)}_X = j^! (\cA_X \boxtimes \cA_X) \in \D(\XYX)
$$
and the resulting Hecke category of bimonodromic $\D$-modules
$$\D_{\cA^{(2)}_X}(\XYX) = \Mod_{\cA^{(2)}_X}(\D(\XYX))
$$
More generally, we have similar constructions for repeated fiber products.

There is a natural monoidal structure on the  bimonodromic Hecke category 
$\tilde\cH = \D_{\cA^{(2)}_X}(\XYX)$ with product given by
$$
\fM \star_{\cA_X} \fN = \tilde p_{13*}(\tilde p_{12}^!(\fM) \ot_{\cA^{(3)}_X} \tilde p_{23}^!(\fN))
$$ 
Here the functors are the ringed-space variants of the usual pullback, tensor and pushforward functors. Namely,
the $!$-pullbacks denote the composition of the usual $!$-pullbacks and then tensoring up with the free $ \cA_X$-module on the remaining factor. The tensor product 
denotes the tensor product of $\cA^{(3)}_X$-modules.
 The $*$-pushforward denotes 
 the composition of the usual $*$-pushforward and then forgetting the $\cA_X$-module structure on the
 middle factor.
The monoidal unit $1_{\tilde \cH}$ is the diagonal bimodule $u_*\cA_{X}$ arising from the map $u:X\to \XYX$. 
Standard identities produce the higher constraints of the monoidal structure.

In the case where $\cA_X = \pi_*\pi^*\omega_X  \in \D(X)$ for a smooth map $\pi:\tilde X\to X$ as discussed above,  the 
monodromic Hecke category
is a full subcategory 
$$
\xymatrix{
\tilde\cH =\D_{\cA^{(2)}_X}(\XYX) 
\ar@{^(->}[r] &
\D(\tilde X\times_Y \tilde X)
}$$
Moreover, the latter category is naturally monoidal with respect to convolution and the above inclusion is monoidal.

\begin{example}
If $\tilde X = B \to X = BB \to Y = BG$ are the natural projections of classifying stacks for a Borel subgroup $B\subset G$ and its unipotent radical $N\subset B$, 
then a bimonodromic $\D$-module  in the above sense is equivalent to a traditional unipotent bimonodromic $\D$-module on $N\bs G/N$. 
\end{example}

We have the following algebraic observation extending Theorems~\ref{perfect Hecke}, \ref{CY Hecke}.

\begin{theorem}\label{monodromic Hecke}
Suppose $X$ is a  stack with smooth diagonal, $Y$ is a stack, and $X\to Y$ is a proper representable map.
Suppose  
$\cA_X\in \D(X)$ is a commutative  algebra object whose underlying object is compact.

Then the monodromic Hecke category  $\tilde \cH=\D_{\cA^{(2)}_X}(\XYX)$
 is semi-rigid (hence 2-dualizable) with a canonical Calabi-Yau structure. 
 
\end{theorem}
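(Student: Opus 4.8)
The plan is to follow the template established for the non-monodromic Hecke category in Theorems~\ref{perfect Hecke} and~\ref{CY Hecke}, replacing all functors by their $\cA_X$-linear (ringed-space) variants and checking that every geometric input survives. The key observation is that the proof of Theorem~\ref{perfect Hecke} only uses two geometric facts about the maps in the convolution diagram: the projections $p_{13}$ are base changes of the proper map $p:X\to Y$, and the maps $p_{12}\times p_{23}$ are base changes of the smooth diagonal $\delta:X\to X\times X$ (of relative dimension $-\dim X$). The same maps $\tilde p_{ij}$ appearing in the monodromic product are literally the same underlying maps of stacks, so their properness/smoothness is unchanged; the only new ingredient is bookkeeping with the commutative algebra objects $\cA^{(k)}_X$. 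Since $\cA_X$ is assumed compact, the object $\cA^{(2)}_X = j^!(\cA_X\boxtimes\cA_X)$ is compact in $\D(\XYX)$ (as $j$ is a base change of $\delta$, hence $j^!$ is quasi-proper), so $\D_{\cA^{(2)}_X}(\XYX)$ is compactly generated by objects $\cA^{(2)}_X\otimes\fM$ for $\fM\in\D(\XYX)$ compact, mirroring the proof of the Proposition just above this theorem.

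First I would establish semi-rigidity. Following the proof of Theorem~\ref{perfect Hecke}, I would exhibit, for a compact object $\fM\in\tilde\cH$, a candidate dual $\tilde\iota(\fM)$ built from the $\cA_X$-linear versions of Verdier duality, the swap involution $\sigma$, and a shift by $[2\dim X]$. The swap involution $\sigma:\XYX\to\XYX$ intertwines $\cA^{(2)}_X$ with itself (since it swaps the two $\boxtimes$-factors), so it induces $\sigma:\tilde\cH\xrightarrow{\sim}\tilde\cH^{op}$ as before. One then reduces, exactly as in Theorem~\ref{perfect Hecke}, to verifying the ``general identity''~\eqref{general eq} in its $\cA_X$-linear form: that the rotational equivalence $r$ on $X\times_Y X\times_Y X$ intertwines the two ways of computing $\Gamma_{\XYX}$ of the relevant triple convolution. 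The point is that $r$ is compatible with the algebra objects $\cA^{(3)}_X$ — it permutes and inverts the $Y$-paths, which does not affect the pullback of $\cA_X$ from the factors — so the same diagram chase goes through once one replaces $!$-pullbacks by their ringed variants and ordinary $\otimes$ by $\otimes_{\cA^{(k)}_X}$. The relative-dimension shifts are identical since the maps are unchanged.

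Next I would establish the Calabi-Yau structure by repeating the topological chiral homology argument of Theorem~\ref{CY Hecke}. Here the stacks $\cM(I)$ of maps $\coprod I_i\to Y$ with boundary lifts to $X$ get replaced by their ``monodromic'' analogues: one works with the algebra objects $\cA_X$ on the boundary-lift factors, so that $\psi_{\tilde\cH}(I)=\D_{\cA}(\cM(I))$ in the appropriate sense, and the correspondence $\cM(I)\leftarrow\cM(D,I)\to pt$ furnishes the augmentation $\tau_S$ via the ringed-space pushforward/pullback. The self-duality used to recognize $\tau_S|_{\cH\otimes\cH}$ as an evaluation pairing is now the $\cA^{(2)}_X$-linear Verdier self-duality of $\tilde\cH$, which exists because $\tilde\cH$ is compactly generated (Proposition~\ref{jacob duality}) and because the monodromic product is built from quasi-proper functors (so the pivotal structure from semi-rigidity gives the weak Calabi-Yau structure by Theorem~\ref{perfect dualizable}, upgraded to the full Calabi-Yau structure by the chiral homology computation). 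Finally, 2-dualizability is an automatic consequence of semi-rigidity by Theorem~\ref{perfect dualizable}.

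The main obstacle I anticipate is purely formal bookkeeping rather than conceptual: one must check that the standard base-change, projection-formula, and adjunction identities for $f^!$, $f_*$ continue to hold for the ringed-space variants used in the monodromic product — i.e. that tensoring up to a free $\cA_X$-module and forgetting $\cA_X$-module structures are compatible with the proper and smooth adjunctions in the way the diagram chases require. Concretely, the subtle point is that $\tilde p_{13*}$ (forget-then-pushforward) must still commute with Verdier duality and that $(\tilde p_{12}\times\tilde p_{23})^*\simeq(\tilde p_{12}\times\tilde p_{23})^![2\dim X]$ in the $\cA_X$-linear setting; both follow because the underlying maps are the same and the algebra objects are pulled back compatibly, but making this precise is where care is needed. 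Since the excerpt explicitly notes ``the requisite arguments are formally identical,'' I would present the monodromic proof by citing the structure of Theorems~\ref{perfect Hecke} and~\ref{CY Hecke} and indicating only the modifications: replace $\D(-)$ by $\D_{\cA}(-)$, replace $\boxtimes$, $\otimes$, $f^!$, $f_*$ by their ringed analogues, and observe that compactness of $\cA_X$ guarantees compact generation and that $\cA^{(2)}_X$ is a compact unit's worth of data so the dualizability computations are unchanged.
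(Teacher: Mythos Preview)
Your approach is in the right spirit but differs from the paper's in the semi-rigidity step, and you gloss over one subtlety.

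The paper does \emph{not} redo the rotational-equivalence argument of Theorem~\ref{perfect Hecke} in the ringed setting. Instead it works directly with the compact generators $\cA^{(2)}_X\otimes\fM$ for $\fM\in\cH$ compact, and computes their monoidal duals in $\tilde\cH$ by hand using the already-known dual $\iota(\fM)$ in the non-monodromic $\cH$. The key point is that one must twist by the monoidal dual $\cA_X^\vee\in\D(X)$: the \emph{right} dual of $\cA^{(2)}_X\otimes\fM$ is $j^!(\cA_X\boxtimes\cA_X^\vee)\otimes\iota(\fM)$, while the \emph{left} dual is $j^!(\cA_X^\vee\boxtimes\cA_X)\otimes\iota(\fM)$. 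This is shorter than your plan and leverages Theorem~\ref{perfect Hecke} as a black box rather than reproving it.

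The subtlety you miss is precisely this asymmetry: your proposal speaks of a single candidate $\tilde\iota(\fM)$ built from ``$\cA_X$-linear Verdier duality, the swap, and a shift'', implicitly expecting left and right duals to coincide as they did in the non-monodromic case. In general they do not, because free $\cA^{(2)}_X$-modules acquire an extra twist by $\cA_X^\vee$ on one side or the other. (When later specializing to the case $\cA_X^\vee\simeq\cA_X[k]$, this is only a shift, but at the level of generality of this theorem it matters.) Your rotational-equivalence approach could in principle be made to work, but you would need to track how ``forgetting the module structure on the middle factor'' in $\tilde p_{13*}$ interacts with Verdier duality, which is exactly where $\cA_X^\vee$ enters; this is the bookkeeping you flag as an obstacle but do not resolve.

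For the Calabi-Yau structure, your plan matches the paper's: one repeats the topological chiral homology argument of Theorem~\ref{CY Hecke} with the ringed-space variants $\tilde q_*$ and $\tilde p^!$ in place of $q_*$ and $p^!$.
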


\begin{proof}
For compact objects $\fM\in \cH$, objects of the form $\cA^{(2)}_X\otimes \fM\in \tilde \cH$ are compact and generate. Recall from the proof of  Theorem~\ref{perfect Hecke} that $\iota(\fM)\in \cH$ is the  left and right monoidal dual of such compact objects $\fM \in \cH$.

Recall the natural map $j:\XYX \to X\times X$, and introduce the $\cA^{(2)}_X$-modules
$$
\xymatrix{
\cA^{(2)}_X{}' = j^! (\cA_X \boxtimes \cA^\vee_X) \in \D(\XYX)
&
{}'\hspace{-0.25em}\cA^{(2)}_X = j^! (\cA^\vee_X \boxtimes \cA_X) \in \D(\XYX)
}$$

It is simple to check that 
$\cA^{(2)}_X{}' \otimes \iota(\fM)\in \tilde \cH$
(respectively, ${}'\hspace{-0.25em}\cA^{(2)}_X\otimes \iota(\fM)\in \tilde \cH$) is the right (respectively, left) monoidal dual of $\cA^{(2)}_X \otimes \fM\in \tilde \cH$.

Finally, for the Calabi-Yau  structure, one can directly repeat the proof of 
Theorem~\ref{CY Hecke} with the functors $q_*$ and $p^!$ replaced by  their respective  monodromic
versions $\tilde q_*$ and $\tilde p^!$ as in the construction of the monoidal structure on $ \tilde \cH$.
Namely, they are given by the ringed-space variants where  $\tilde q_*$ is the composition of the usual $*$-pushforward
and then forgetting the module structure, and $\tilde p^!$ is the composition of the usual $!$-pullback
and then tensoring up with the free module.
\end{proof}



\subsection{Monodromic Hochschild calculations}
We continue with the setup and constructions of the previous section, in particular Theorem~\ref{monodromic Hecke}.
Thus suppose $X$ is a  stack with smooth diagonal, $Y$ is a stack, and $X\to Y$ is a proper representable map.
Suppose  
$\cA_X\in \D(X)$ is a commutative  algebra object whose underlying object is compact and hence dualizable
with dual denoted by $\cA_X^\vee\in \D(X)$.

Return to the fundamental correspondence
$$
\xymatrix{
 \cL Y  = Y\ti_{Y\ti Y} Y & \cL Y \ti_Y X =X \ti_{X \ti Y} X \ar[l]_-{q} \ar[r]^-{\epsilon} & X\ti_Y X.\\
}
$$
where $q$ is a base change of $p$, and $\epsilon$ is a base change $\delta$.

Define the  functor
$$
\xymatrix{
\tilde F: \D_{\cA^{(2)}_X}(X\ti_Y X) \ar[r]^-{\tilde \epsilon^!} & \D_{\cA_X}(\cL Y \ti_Y X) 
 \ar[r]^-{\tilde q_*}& \D(\cL Y) \\
}
$$
where $\tilde \epsilon^!$ is the composition of the pullback $\epsilon^!$ and then tensoring  with the diagonal $\cA_X$-module, and $\tilde q_*$ is the composition of forgetting $\cA_{X}$-module structure and then the pushforward $q_*$.

For simplicity, we will also assume given an equivalence of $\cA_X$-modules of the form $\cA_X^\vee \simeq \cA_X[k]$,
for some fixed $k$. This provides an identification of the right adjoint of the pushforward $\tilde q_*$ with a shift of the pullback 
$\tilde q^!$.
Thus we have the explicit continuous right adjoint
$$
\xymatrix{
\tilde F^r:\D(\cL Y)   \ar[r]^-{\tilde q^![k]}&\D_{\cA_X}(\cL Y \ti_Y X)  \ar[rr]^-{\tilde \epsilon_*[-2\dim X]} && \D_{\cA^{(2)}_X}(X\ti_Y X)\\
}
$$ 
where  $\tilde q^!$ is the composition of the pullback $q^!$ and then tensoring with $\cA_X$,  and
$\tilde \epsilon_*$ is the composition of forgetting to an $\cA^{(2)}_X$-module structure and then the pushforward $\epsilon_*$.

Let $\cI(\tilde F) \subset \D(\cL Y)$ denote the cocompletion of the  image of $\tilde F$.
It is naturally equivalent to the left orthogonal ${}^\perp \cK(\tilde F^r) \subset \D(\cL Y)$ to the kernel of $\tilde F^r$.

Suppose in addition that tensoring with $\cA_X^\vee$ is conservative so that $\tilde q^!$ satisfies descent. Then both of the above categories  are naturally equivalent to the full subcategory of $ \D(\cL Y)$ of objects whose image under $\tilde q^!$  is in the cocompletion of the image of $\tilde \epsilon^!$.

Now repeating the arguments of Theorem~\ref{thm center} and Corollary~\ref{cor trace} 
gives the following extension.

\begin{theorem}
There are natural adjoint commutative diagrams
$$
\xymatrix{
&  \tilde \cH \ar[dl]_-\ftr \ar[d]<-1ex>_-{F}&\\
 \Tr(\tilde \cH) \ar[r]^-\sim & \ar[u]<-1ex>_-{\tilde F^r} {}^\perp \cK(\tilde F^r) \simeq \cI(\tilde F) \ar[r]^-\sim
 &  \ar[ul]_-\fz \Z(\tilde \cH)
   }
$$
 \end{theorem}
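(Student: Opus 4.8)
The plan is to deduce this theorem by running the arguments of Theorem~\ref{thm center} and Corollary~\ref{cor trace} verbatim in the monodromic setting, using the algebraic results already established in Theorem~\ref{monodromic Hecke}. The first step is to observe that, since pushforward along the relative diagonal $\delta: X \to \XYX$ is monoidal and carries $\cA_X$ to $\cA^{(2)}_X$, the monodromic Hecke category $\tilde\cH = \D_{\cA^{(2)}_X}(\XYX)$ is naturally an algebra object in the monoidal category $\tilde\cB = \D_{\cA_X}(X)$ of monodromic $\D$-modules on $X$, with the relative bar resolution $\tilde\cH \simeq |\tilde\cH^{\otimes_{\tilde\cB}(\bullet+2)}|$ available to compute its center. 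Since Theorem~\ref{monodromic Hecke} guarantees that $\tilde\cH$ is semi-rigid with a canonical Calabi-Yau (in particular pivotal) structure, Proposition~\ref{reversing hh} identifies $\Z(\tilde\cH)$ with $\Tr(\tilde\cH)$, and the universal central functor $\fz$ is the right adjoint of the universal trace $\ftr$; this gives the two commuting triangles once we identify one of the vertices geometrically.

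Next I would carry out the geometric identification. Following the proof of Theorem~\ref{thm center}, one computes the center as the totalization of the cosimplicial object $\cC^\bullet = \Hom_{\tilde\cB \otimes \tilde\cB^{op}}(\tilde\cH^{\otimes_{\tilde\cB}\bullet}, \tilde\cH)$. Unwinding, the $k$-cosimplices are $\tilde\cB \otimes \tilde\cB^{op}$-linear functors between monodromic $\D$-module categories on iterated fiber products $(\XYX)^{\times_X \bullet}$; since the tensor structure of $\D_{\cA_X}(X)$-modules is compatible with the underlying $\D$-module tensor structure, these are computed by the monodromic analogues of the integral-transform description. The key input here is the monodromic version of Proposition~\ref{prop int trans adjoint}: because $p:X \to Y$ is proper (hence safe) and $X$ has smooth diagonal, the monodromic integral transform $\tilde\Phi$ still admits a fully faithful left adjoint, so we get a level-wise fully faithful map of cosimplicial diagrams $\cC^\bullet \hookrightarrow \D(X^{\times_Y(\bullet+1)} \times_Y \cL Y)$, whose transition maps are the monodromic $!$-pullbacks $\tilde\epsilon^!$. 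Passing to the limit identifies $\Z(\tilde\cH)$ with the full subcategory of $\D(\cL Y)$ of objects whose $\tilde q^!$-pullback lies in the cocompletion of the image of $\tilde\epsilon^!$ --- which is exactly ${}^\perp\cK(\tilde F^r) \simeq \cI(\tilde F)$, using the descent hypothesis (conservativity of $\otimes \cA_X^\vee$) to identify the two descriptions. As in Corollary~\ref{cor trace}, passing to left adjoints throughout turns the augmented semi-cosimplicial diagram for the center into the augmented semi-simplicial diagram for the trace and produces the adjoint triangle with $\ftr$ and $\tilde F$.

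The main obstacle I anticipate is checking that the monodromic integral-transform formalism really does inherit the fully-faithful-adjoint property of Proposition~\ref{prop int trans adjoint}. The proof of that proposition rests on base-change identities for the relative diagonal $j_{-1}: X_1 \times_Y X_2 \to X_1 \times X_2$ and the resulting left adjointability of an augmented cosimplicial category; one must verify that the $\cA_X$-linear (``ringed-space'') versions $\tilde q_*$, $\tilde p^!$, $\tilde\epsilon_*$, $\tilde\epsilon^!$ --- which are built from the ordinary $\D$-module functors by freely adjoining or forgetting $\cA_X$-module structures --- still satisfy the requisite base-change compatibilities, and that forgetting the module structure is suitably compatible with taking limits and adjoints. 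This is where the hypothesis that $\cA_X$ is compact (so $\D_{\cA_X}(X)$ is semi-rigid, by the proposition preceding Theorem~\ref{monodromic Hecke}) and the simplifying assumption $\cA_X^\vee \simeq \cA_X[k]$ are essential: the former keeps all the relevant functors quasi-proper so that Lemma~\ref{perfect adjoints} applies to linearize adjoints, and the latter makes the right adjoint $\tilde F^r$ explicit as a shift of $\tilde\epsilon_* \tilde q^!$. Once these compatibilities are in place, the rest of the argument is a formal transcription of Section~\ref{character section}.
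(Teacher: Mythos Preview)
Your proposal is correct and follows essentially the same approach as the paper: compute the center via the relative bar resolution over $\tilde\cB=\D_{\cA_X}(X)$, apply Proposition~\ref{prop int trans adjoint} to obtain a level-wise fully faithful embedding of the resulting cosimplicial object into $\D_{\cA_X^{(\bullet+1)}}(X^{\times_Y(\bullet+1)}\times_Y\cL Y)$ (note the target should be the monodromic category, with structure maps given by $!$-pullback followed by tensoring with powers of $\cA_X^\vee$), and then pass to totalizations and left adjoints exactly as in Theorem~\ref{thm center} and Corollary~\ref{cor trace}. The obstacle you flag is handled in the paper simply by invoking Proposition~\ref{prop int trans adjoint} directly, since the underlying geometric base-change argument is unchanged and the $\cA_X$-module structures are carried along by the ringed-space functors.
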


\begin{proof}
For notational convenience, set $\tilde \cA = \D_{\cA_{\XYX}}(X \times_Y X)$ and $\tilde \cB = \D_{\cA_X}(X)$,
so that the center is the totalization of the cosimplicial object 
$$
\tilde \cC^\bullet =  \Hom_{\tilde \cB \otimes \tilde\cB^{op}} (\tilde\cA^{\otimes_{\tilde\cB \bullet}},  \tilde\cA)
$$

Using Proposition~\ref{prop int trans adjoint} as in the proof of of Theorem~\ref{thm center}, we have a level-wise fully faithful map
 of cosimplicial diagrams
 \[ 
\xymatrix{
\tilde \cC^\bullet \ar[r] &  
 \D_{\cA^{(\bullet + 1)}_{X}}(X^{\times_Y (\bullet+1)} \times_Y \cL Y)
 }
\]
where  the cosimplicial  structure maps on the latter are given by the composition of $!$-pullbacks together
with tensoring up with tensor powers of $\cA_X^\vee$.

Thus the center $\Z(\cA)$ is equivalent to the full subcategory
 of $\D(\cL Y)$ of objects whose image under $\tilde q^!$  is in the cocompletion of the image of $\tilde \epsilon^!$.
\end{proof}

Finally, there is a  Morita-invariance between the original and now monodromic calculations.

\begin{prop} The full subcategories $\cK(F^r), \cK(\tilde  F^r)\subset  \D(\cL Y)$ defined by the original
and monodromic fundamental correspondences coincide.
\end{prop}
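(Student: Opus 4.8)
The plan is to show that the kernels $\cK(F^r)$ and $\cK(\tilde F^r)$ inside $\D(\cL Y)$ are literally cut out by the same condition, namely the failure of an object to lie in the cocompletion of the image of $\epsilon^!$ after pullback along $q^!$. Recall from the discussion preceding Theorem~\ref{thm center} that $\cK(F^r)$ is the right orthogonal to $\cI(F) = {}^\perp\cK(F^r)$, which by the descent argument for $q^!$ is the full subcategory of $\D(\cL Y)$ of objects $\cM$ whose $!$-pullback $q^!\cM \in \D(\cL Y\times_Y X)$ lies in the cocompletion of the image of $\epsilon^!$. The same analysis in the monodromic setting identifies $\cI(\tilde F)$ with the full subcategory of objects $\cM$ with $\tilde q^!\cM$ in the cocompletion of the image of $\tilde\epsilon^!$. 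So it suffices to show that these two subcategories of $\D(\cL Y)$ agree, equivalently that their orthogonals agree.

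The key observation is that the monodromic functors differ from the non-monodromic ones only by the ``free module'' operations: $\tilde q^!$ is the composition of $q^!$ with $-\otimes_{?} \cA_X$ (tensoring up to a free $\cA_X$-module on the relevant factor), and $\tilde\epsilon^!$ is $\epsilon^!$ followed by tensoring with the diagonal $\cA_X$-module. First I would unwind that, since $\cA_X$ is a compact (hence dualizable) algebra object on $X$, the free-module functor $\D(X)\to\D_{\cA_X}(X)$, $\fN\mapsto \cA_X\otimes\fN$, is a left adjoint to the forgetful functor, and its essential image generates $\D_{\cA_X}(X)$ under colimits. Pulling this back along $q$ and $\epsilon$ (both base changes of $p$ and $\delta$), one sees that the cocompletion of the image of $\tilde\epsilon^!$ inside $\D_{\cA_X}(\cL Y\times_Y X)$ is exactly the $\cA_X$-module cocompletion generated by (the free $\cA_X$-modules on) the image of the ordinary $\epsilon^!$. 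Concretely: an object $\cM\in\D(\cL Y)$ has $\tilde q^!\cM$ in the image-cocompletion of $\tilde\epsilon^!$ if and only if the underlying $\D$-module of $\tilde q^!\cM$, which is just $q^!\cM\otimes(\text{underlying object of }\cA_X)$, lies in the image-cocompletion of $\epsilon^!$; and since $\cA_X$ is compact and (by the standing hypothesis that tensoring with $\cA_X^\vee$ is conservative) $-\otimes\cA_X$ is conservative, this is equivalent to $q^!\cM$ itself lying in the image-cocompletion of $\epsilon^!$.

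Thus both $\cI(F)$ and $\cI(\tilde F)$ are described by the identical condition on $\cM\in\D(\cL Y)$ — that $q^!\cM$ lies in the cocompletion of the image of $\epsilon^!$ — so $\cI(F)=\cI(\tilde F)$, and taking orthogonals gives $\cK(F^r)=\cK(\tilde F^r)$ as full subcategories of $\D(\cL Y)$. I would also note the cleaner conceptual packaging: the forgetful functor $\D_{\cA_X^{(2)}}(\XYX)\to\D(\XYX)$ and its free-module left adjoint exhibit $\tilde\cH$ as Morita equivalent, in the relevant sense, to a localization built from $\cH$, and the fundamental correspondence intertwines the two; but the hands-on argument above is more direct.

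The main obstacle I anticipate is bookkeeping around conservativity and descent for the monodromic pullback $\tilde q^!$: one needs the hypothesis (already invoked in the monodromic Hochschild section) that tensoring with $\cA_X^\vee$ is conservative, so that $\tilde q^!$ still satisfies descent and so that ``$q^!\cM\otimes\cA_X$ is in the cocompletion of $\mathrm{im}(\epsilon^!)$'' can be upgraded to ``$q^!\cM$ is in the cocompletion of $\mathrm{im}(\epsilon^!)$''. I would make sure the compactness of $\cA_X$ is used exactly where needed — it guarantees $-\otimes\cA_X$ preserves compact objects and hence that the free-module construction is compatible with the cocompletions taken throughout — and that the equivalence $\cA_X^\vee\simeq\cA_X[k]$ assumed earlier is not secretly required here (it should not be, since we only need conservativity and compactness, not the precise identification of the dual).
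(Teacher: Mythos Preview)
Your approach is correct in spirit and can be made rigorous, but it is considerably more roundabout than the paper's argument, and a couple of the steps you treat as immediate need more justification.

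The paper's proof is a single computation on the right adjoints themselves rather than on the image-cocompletions. One checks directly, using composition identities and the projection formula, that on underlying objects
\[
\tilde F^r(\fM)\;\simeq\; F^r(\fM)\otimes \cA'_{\XYX}
\]
for a suitable object $\cA'_{\XYX}$ built from $\cA_X$ (and its dual). Since tensoring with $\cA_X^\vee$ is conservative, so is tensoring with $\cA'_{\XYX}$, and therefore $\tilde F^r(\fM)=0$ if and only if $F^r(\fM)=0$. That is the whole proof: the kernel equality drops out in one line, with no need to pass through the descriptions of $\cI(F)$ and $\cI(\tilde F)$.

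Your route via the characterizations of $\cI(F)$ and $\cI(\tilde F)$ reaches the same conclusion but requires extra bookkeeping that you gloss over. Two specific points: (i) the step ``$q^!\cM\otimes\cA_X$ lies in the image-cocompletion of $\epsilon^!$ iff $q^!\cM$ does'' is \emph{not} a formal consequence of conservativity alone---you also need that the image-cocompletion of $\epsilon^!$ is closed under $-\otimes\cA_X$ and $-\otimes\cA_X^\vee$ (which is true, since $\epsilon^!$ is linear over the relevant $\D(X)$-action, but should be said), and then an argument with the localization cofiber; (ii) the passage between ``$\tilde q^!\cM$ lies in the image-cocompletion of $\tilde\epsilon^!$'' and the corresponding statement about underlying $\D$-modules is not as automatic as you suggest, since $\tilde\epsilon^!$ involves a relative tensor product over $\cA_X^{(2)}$, not just a free-module operation. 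Both gaps are fillable, but by the time you fill them you have essentially rederived the paper's direct identity $\tilde F^r\simeq F^r\otimes(\text{conservative})$ in disguise. The paper's approach buys you a much shorter argument with no subcategory-membership subtleties; your approach has the virtue of making the Morita-invariance intuition more visible, but at the cost of these extra verifications.
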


\begin{proof}
On underlying objects, we calculate via composition identities and the projection formula  that
$
\tilde F^r(\fM) \simeq F(\fM) \otimes \cA'_{\XYX}$.
Since tensoring with $\cA_X^\vee$ is conservative, so is  tensoring with $\cA'_{\XYX}$, and hence
$\cK(F^r) = \cK(\tilde F^r) \subset \D(\cL Y)$.
\end{proof}


\subsection{Character sheaves again}

Now we  apply our previous results
to our motivating example when $X=BB$, $Y=BG$, for a reductive group $G$  and   Borel subgroup $B\subset G$. 
We take $\tilde X = BN$, for the unipotent radical $N\subset B$,
and take $\cA_X = \pi_*\CC_{BN} \in \D(BB)$ where $\pi:BN\to BB$ is the natural $H= B/N$ torsor.
Thus we are studying the bimonodromic Hecke category $\tilde \cH_G$ of unipotent bimonodromic $\D$-modules
on $N\bs G/N$.

\begin{theorem}\label{thm mon char shvs}
The category of unipotent character sheaves $\chsh_{G}$
is equivalent to both the trace and center 
of the bimonodromic Hecke category $\tilde \cH_{G}$.
The equivalences fit into natural adjoint commutative diagrams
$$
\xymatrix{
&  \tilde\cH_{G} \ar[dl]_-\ftr \ar[d]<-1ex>_-{\tilde F}&\\
 \Tr(\tilde\cH_{G}) \ar[r]^-\sim & \ar[u]<-1ex>_-{\tilde F^r} \chsh_{G} \ar[r]^-\sim
 &  \ar[ul]_-\fz \Z(\tilde\cH_G)
   }
$$
 \end{theorem}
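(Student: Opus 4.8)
The plan is to obtain Theorem~\ref{thm mon char shvs} as the specialization of the general monodromic results of this section to $X=BB$, $Y=BG$, $\tilde X=BN$, $\cA_X=\pi_*\CC_{BN}$, combined with the non-monodromic identification $\cI(F)\simeq\chsh_G$ already recorded in Theorem~\ref{thm char shvs}. First I would verify the standing hypotheses: the map $p:BB\to BG$ is proper and representable (its fibers are the flag variety $G/B$), and $BB$ has smooth diagonal as the classifying stack of a smooth affine group scheme (cf.\ Proposition~\ref{BG semirigid}), so Theorems~\ref{perfect Hecke}, \ref{CY Hecke} and \ref{monodromic Hecke} apply once the conditions on $\cA_X$ are checked. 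Here the essential input is that $\pi:BN\to BB$ fits into the fibration $BN\to BB\to BH$ with fiber the torus $H=B/N$, so that the fiberwise cohomology of $\CC_{BN}$ is the finite-dimensional exterior algebra $H^*(H)$; from this one must extract (a) that the underlying object of $\cA_X$ is compact in $\D(BB)$ even though $BB$ is not itself safe, (b) that Poincar\'e duality on $H$ furnishes an equivalence of $\cA_X$-modules $\cA_X^\vee\simeq\cA_X[2\dim H]$ (up to the appropriate shift $k$), and (c) that tensoring with $\cA_X^\vee$ is conservative, which follows because $\pi$ is a smooth surjection and the monad $\cA_X\otimes(-)\simeq\pi_*\pi^*$ is therefore conservative. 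I expect (a) to be the main obstacle: it is the one point where something beyond the formal machinery is needed, namely the finiteness of the free-monodromic completion along the torus direction.

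Granting this, Theorem~\ref{monodromic Hecke} gives that $\tilde\cH_G=\D_{\cA^{(2)}_X}(N\bs G/N)$ is semi-rigid with a canonical Calabi--Yau structure, and the preceding monodromic Hochschild theorem produces the adjoint commuting square identifying $\Tr(\tilde\cH_G)$ and $\Z(\tilde\cH_G)$ with ${}^\perp\cK(\tilde F^r)\simeq\cI(\tilde F)$, carrying the universal trace $\ftr$, the central map $\fz$, the functor $\tilde F$ and its right adjoint $\tilde F^r$. It then remains to identify this common category with $\chsh_G$. The plan is to invoke the Morita-invariance proposition above, which gives $\cK(\tilde F^r)=\cK(F^r)$ as subcategories of $\D(\cL Y)=\D(G\adjquot G)$, hence ${}^\perp\cK(\tilde F^r)={}^\perp\cK(F^r)$; and then Theorem~\ref{thm center}, applied in the present example exactly as in the proof of Theorem~\ref{thm char shvs}, yields ${}^\perp\cK(F^r)\simeq\cI(F)$, which is by definition $\chsh_G$. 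At this point I would record that the functor $F$ of the character-sheaves section is precisely the Harish Chandra transform $F=p_*\delta^!$ of the introduction, since $(G\times G/B)/G\simeq G\adjquot B$, so that no ambiguity of notation is hidden in the identification.

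Chaining these equivalences gives $\Tr(\tilde\cH_G)\simeq\chsh_G\simeq\Z(\tilde\cH_G)$, and transporting $\ftr$, $\tilde F$, $\tilde F^r$, $\fz$ through them turns the square produced by the monodromic Hochschild theorem into the two asserted adjoint commutative diagrams after the substitution ${}^\perp\cK(\tilde F^r)\leadsto\chsh_G$. To make sure the labelled arrows are the intended ones, I would also note that under ${}^\perp\cK(\tilde F^r)\simeq{}^\perp\cK(F^r)$ the monodromic functor $\tilde F$ corresponds to $F$ up to tensoring with $\cA'_{\XYX}$ --- this is exactly the computation $\tilde F^r(\fM)\simeq F(\fM)\otimes\cA'_{\XYX}$ appearing in the Morita-invariance proposition, together with conservativity of that twist. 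Apart from the hypothesis check on $\cA_X$ in the first paragraph, every step is transport of structure along previously established equivalences.
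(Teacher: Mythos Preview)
Your proposal is correct and follows exactly the approach the paper takes: the paper states Theorem~\ref{thm mon char shvs} with no separate proof, treating it as the direct specialization to $X=BB$, $Y=BG$, $\cA_X=\pi_*\CC_{BN}$ of the general monodromic Hochschild theorem together with the Morita-invariance proposition $\cK(\tilde F^r)=\cK(F^r)$ and the earlier identification $\cI(F)=\chsh_G$. You are in fact more careful than the paper in isolating the hypotheses (a)--(c) on $\cA_X$; for (a), note that under $\D(BB)\simeq C_*(H)\text{-mod}$ the object $\cA_X$ is a shift of the rank-one free module and hence compact, and for (c) your argument needs the minor addition that $\pi_*$ is conservative (not just $\pi^*$), which holds here because $C_*(H)$ is finite-dimensional.
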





\end{document}